\theoremstyle{plain}
\newtheorem{theorem}{Theorem}[section]
\newtheorem{lemma}[theorem]{Lemma}
\newtheorem{corollary}[theorem]{Corollary}
\newtheorem{proposition}[theorem]{Proposition}
\theoremstyle{definition}
\newtheorem{definition}[theorem]{Definition}
\theoremstyle{remark}
\newtheorem{remark}[theorem]{Remark}
\newcommand{\VK}{\kappa}
\newcommand{\sk}{\kappa}
\DeclareSymbolFont{AMSb}{U}{msb}{m}{n}
\DeclareMathSymbol{\N}{\mathalpha}{AMSb}{"4E}
\DeclareMathSymbol{\R}{\mathalpha}{AMSb}{"52}
\DeclareMathSymbol{\Z}{\mathalpha}{AMSb}{"5A}
\DeclareMathSymbol{\D}{\mathalpha}{AMSb}{"44}
\DeclareMathSymbol{\s}{\mathalpha}{AMSb}{"53}
\newcommand{\sX}{\scriptscriptstyle{X}}
\newcommand{\sM}{\scriptscriptstyle{M}}
\newcommand{\sN}{\scriptscriptstyle{N}}
\newcommand{\sK}{\scriptscriptstyle{K}}
\newcommand{\sZ}{\scriptscriptstyle{Z}}
\newcommand{\sPi}{\scriptscriptstyle{\Pi}}
\newcommand{\sY}{\scriptscriptstyle{Y}}
\newcommand{\sscr}{\scriptscriptstyle}
\newcommand{\mms}{(X,\de_{{\scriptscriptstyle{X}}},\m_{{\scriptscriptstyle{X}}})}
\newcommand{\mmsi}{(X_i,\de_{{\scriptscriptstyle{X_i}}},\m_{{\scriptscriptstyle{X_i}}})}
\DeclareMathOperator{\frs}{\mathfrak{s}}
\DeclareMathOperator{\frc}{\mathfrak{c}}
\DeclareMathOperator{\vol}{vol}
\DeclareMathOperator{\Ch}{Ch}
\DeclareMathOperator{\lip}{Lip}
\DeclareMathOperator{\supp}{supp}
\DeclareMathOperator{\de}{d}
\DeclareMathOperator{\m}{m}
\DeclareMathOperator{\ric}{ric}
\DeclareMathOperator{\Ent}{Ent}
\newcommand{\ChX}{\Ch^{\sX}}
\title{Evolution variational inequality and Wasserstein control in variable curvature context}
\author{Christian Ketterer}
\email{christian.ketterer@math.uni-freiburg.de}
\begin{document}

\maketitle
\begin{abstract}In this note we continue the analysis of metric measure space with variable lower Ricci curvature bounds. First, we study $(\kappa,N)$-convex functions on metric spaces where $\kappa$ is a lower semi-continuous 
function, and
gradient flow curves in the sense of a new evolution variational inequality that captures the information that is provided by $\kappa$. Then, in the spirit of previous work by Erbar, Kuwada and Sturm \cite{erbarkuwadasturm}
we introduce an entropic curvature-dimension condition $CD^e(\kappa,N)$ for metric measure spaces and lower semi-continuous $\kappa$. This condition is stable with respect to Gromov convergence and we show that is equivalent to the 
reduced curvature-dimension condition $CD^*(\kappa,N)$ provided the space is essentially non-branching. Finally, we introduce 
a Riemannian curvature-dimension condition in terms of an evolution variational inequality on the Wasserstein space. A consequence is a new differential Wasserstein contraction estimate.
\end{abstract}
\noindent
\tableofcontents
\section{Introduction}
In \cite{ketterer5} the author introduces a curvature-dimension condition 
$CD(\kappa,N)$ for a metric measure space $\mms$ in terms of displacement convexity on the $L^2$-Wasserstein space where $\kappa:X\rightarrow \mathbb{R}$ is a lower semi-continuous 
function and $N\in [1,\infty)$. If $\kappa$ is constant, the condition is precisely the definition
that was proposed by Lott, Sturm and Villani in \cite{stugeo1,stugeo2,lottvillani,viltot}. 
The condition $CD(\kappa,N)$
has geometric consequences as a generalized Bishop-Gromov estimates and a generalized Bonnet-Myers theorem. But it cannot recognize
Riemannian-type spaces which are characterized by linearity of the induced heat flow of their Cheeger energy. In the context of constant lower curvature bounds this obstacle was 
resolved by Ambrosio, Gigli and Savar\'e in \cite{agsheat,agsriemannian} who showed that displacement convexity in combination with linearity of the heat flow is equivalent to the existence of 
gradient flow curves in the sense of an \textit{evolution variational inequality} ($EVI$) for the Boltzmann Entropy on the $L^2$-Wasserstein. 

There are two important extensions of this idea. On the one hand, Erbar, Kuwada and Sturm \cite{erbarkuwadasturm} introduce a finite dimensional version of the $EVI$-formula 
 to define a Riemannian curvature-dimension condition.
As part of their program they also introduce
a so-called \textit{entropic curvature-dimension conditon} that is a more $PDE$-friendly modification of the original $CD$-condition. 
On the other hand, Sturm \cite{sturmvariable} defines $EVI_{\kappa}$-gradient flow curves where $\kappa$ is
a lower semi-continuous function. He proves several implications and equivalences, and also stability of this concept under measured Gromov convergence. 

In this note we introduce a Riemannian curvature-dimension condition for variable lower curvature bounds. For this purpose, we study $(\kappa,N)$-convex functions on metric spaces
where $\kappa:X\rightarrow \mathbb{R}$ is lower semi-continuous. We use a new characterization of $(\kappa,N)$-convexity in terms of an integrated inequality \cite{ketterer5} that involves 
so-called generalized dirstortion coefficients $\sigma_{\kappa}^{\sscr{(t)}}$ (Defintion \ref{generaldist}, Defintion \ref{ahr}). 
Provided the metric space $(X,\de_{\sX})$ admits a first variation formula, we can deduce an evolution variational inequality 
for gradient flow curves of $(\kappa,N)$-convex functions. 
More precisely, we say that an absolutely continuous curve $(x_s)_{s\in(0,\infty)}$ 
is an $EVI_{\kappa,\sN}$ gradient flow
curve of $f$ starting in $x_0\in X$ if $\lim_{s\rightarrow 0}x_s=x_0$, and if for all $z\in D(f)$ there exists a constant speed geodesic $\gamma^s:[0,1]\rightarrow X$ between 
$x_s$ and $z$ such that the \textit{evolution variational inequality} 
\begin{align}\label{jjj}
\frac{d}{dt}\sigma_{\kappa^-_{\gamma^s}/\sN}^{\sscr{(t)}}(|\dot{\gamma}^s|)|_{t=1}\geq \frac{1}{2N}\frac{d}{ds}\de_{\sX}(x_s,z)^2+\frac{d}{dt}\sigma_{\kappa^+_{\gamma^s}/\sN}^{\sscr{(t)}}(|\dot{\gamma}^s|)|_{t=0}\frac{U_N(z)}{U_N(x_s)}
\end{align}
holds for a.e. $s>0$ where $U_{\sN}(x)=e^{-\frac{f}{\sN}}$.
By monotonicity of the derivatives of the distortion coefficients at $0$ and $1$ the evolution variational inequality (\ref{jjj}) 
is consistent with the previous versions of evolution variational inequalities by Ambrosio, Gigli,
and Savar\'e and by Erbar, Kuwada and Sturm. Furthermore, an $EVI_{\kappa,\sN}$-gradient flow is also an 
$EVI_{\kappa}$-gradient flow in the sense of Sturm (Lemma \ref{monot}). In the special case of constant $\kappa$ our
$EVI$-inequality becomes the one in \cite{erbarkuwadasturm} (Remark \ref{thesame}).
In addition, we prove 
that the existence of $EVI_{\kappa,\sN}$-gradient curves yields strong $(\kappa,N)$-convexity (Theorem \ref{strong}).

Then, we use this idea in the context of the $L^2$-Wasserstein space and the Boltzmann Entropy over some metric measure space $\mms$. 
In the spirit of Erbar, Kuwada and Sturm we introduce an entropic curvature-dimension conditon $CD^e(\kappa,N)$. More precisely, 
a metric measure space $(X,\de_{\sX},\m_{\sX})$ satifies the \textit{entropic curvature-dimension condition} for some admissible function $\kappa$ and $N\geq 0$ if 
for any pair $\mu_0,\mu_1\in D(\Ent)$ with compact support there exists a $L^2$-Wasserstein geodesic $\Pi$ between $\mu_0$ and $\mu_1$ such that for all $t\in [0,1]$
\begin{align*}
U_N(\mu_t)\geq \sigma_{\kappa^-_{\Pi}/\sN\Theta^2}^{\sscr{(1-t)}}U_N(\mu_0)+\sigma_{\kappa^+_{\sPi}/\sN\Theta^2}^{\sscr{(t)}}U_N(\mu_1)
\end{align*}
where $U_{\sN}(\mu)=e^{-\frac{1}{\sN}\Ent}$, $(e_t)_{\star}\Pi=\mu_t$, $\Theta=W_2(\mu_0,\mu_1)$ and 
$$
\kappa_{\sPi}(t\Theta)\Theta^2=\int \kappa(e_t(\gamma)|\dot{\gamma}|^2d\Pi(\gamma).
$$
We show that the condition $CD^e$ is stable under measured Gromov convergence and that is equivalent to the reduced curvature-dimension condition $CD^*(\kappa,N)$ \cite{ketterer5} provided a non-branching assumption is satisfied.
Moreover, the entropic curvature-dimension condition already implies local compactness and finite Hausdorff dimension of the underlying metric measure space. 

Then, we introduce a Riemannian curvature-dimension that is defined via combination of the entropic curvature-dimension condition and linearity of the heat flow. 
We show that the Riemannian curvature-dimension
condition can be characterized by the existence of \textit{Wasserstein $EVI_{\kappa,\sN}$-gradient flow curves} that is a straightforward modification of inequality (\ref{jjj}) in Wasserstein space context(Theorem \ref{B}). 
Furthermore, the Riemannian curvature-dimension condition is stable w.r.t. measured Gromov convergence (Theorem \ref{C}). 
Hence, these spaces arise naturally as non-smooth limit spaces of Riemannian manifolds (Corollary \ref{D}).
By monotonicity an $EVI_{\kappa,\sN}$ gradient flow curve is also an $EVI_{\kappa}$-gradient flow curve in the sense of Sturm \cite{sturmvariable}. The latter implies the following contraction estimate.
Let $\Pi^s$ be the unique $L^2$-Wasserstein geodesic between $\mu^s$ and $\nu^s$.
Then the following contraction estimate holds
\begin{align*}
\frac{d^+}{d s}W_2(\mu_s,\nu_s)^2
\leq -\int_0^1\int\kappa(\gamma(t))|\dot{\gamma}|^2d\Pi(\gamma)^sdt.
\end{align*}
We also show differential contraction estimates for $N<\infty$ that imply previous differential control estimates (Theorem \ref{dimensionalcontraction}).\smallskip

In section 2 we recall some important preliminaries on metric measure space and Wasserstein geometry. In section 3 introduce generalized distortion coefficients, $(\kappa,N)$-convexity on metric spaces, 
the evolution variational inequality, and prove several implications. In section 4 we introduce the entropic curvature-dimension condition for variable lower curvature bounds, and prove equivalence with the
reduced curvature-dimension condition in the context of essentially non-branching metric measure spaces. In section 5 we define Wasserstein-$EVI_{\kappa,\sN}$-gradient flow curves that characterize a Riemannian curvature 
dimension condition. In section 6 we deduce differential contraction estimates.
\section{Preliminaries}
\begin{definition}[Metric measure space]\label{assmms}
Let $(X,\de_{\sX})$ be a complete and separable metric space, 
and let $\m_{\sX}$ be a locally finite Borel measure on $(X,\de_{\sX})$.
That is,  for all $x\in X$ there exists $r>0$ such that $\m_{\sX}(B_r(x))\in(0,\infty)$.
Let $\mathcal{O}_{\sX}$ and $\mathcal{B}_{\sX}$ be the topology of open sets and the family of Borel sets, respectively.
A triple $(X,\de_{\sX},\m_{\sX})$ will be called \emph{metric measure space}. 
We assume that $\m_{\sX}(X)\neq 0$.
\end{definition}
$(X,\de_{\sX})$ is called \textit{length space} 
if $\de_{\sX}(x,y)=\inf \mbox{L} (\gamma)$ for all $x,y\in X$, 
where the infimum runs over all rectifiable curves $\gamma$ in $X$ connecting $x$ and $y$. 
$(X,\de_{\sX})$ is called \textit{geodesic space} 
if every two points $x,y\in X$ are connected by a curve $\gamma$ such that $\de_{\sX}(x,y)=\mbox{L}(\gamma)$.
Distance minimizing curves of constant speed are called \textit{geodesics}. 
A length space, which is complete and locally compact, is a geodesic space and proper (\cite[Theorem 2.5.23 ]{bbi}).
Rectifiable curves always admit a reparametrization proportional to arc length, and therefore become Lipschitz curves.
In general, we assume that a geodesic $\gamma:[0,1]\rightarrow X$ is parametrized proportional to its length. 
The set of all such geodesics $\gamma:[0,1]\rightarrow X$ is denoted with $\mathcal{G}(X)$ and 
the set of all Lipschitz curves $\gamma:[0,1]\rightarrow X$ parametrized proportional to arc-length is denoted with $\mathcal{LC}(X)$. 
$\mathcal{G}(X)$ and $\mathcal{LC}(X)$ are equipped with the topology that is induced by uniform convergence. 
More precisely, we always consider the distance $\de_{\infty}(\gamma,\tilde{\gamma})=\sup_{t\in[0,1]}|\gamma(t)-\tilde{\gamma}(t)|$.
\smallskip

Let $\mathcal{P}_2(X)$ be the $L^2$-Wasserstein space over $(X,\de_{\sX})$ equipped with the $L^2$-Wasserstein distance
$W_2$. 
The subspace of absolutely continuous probability measure with respect to $\m_{\sX}$ is denoted by $\mathcal{P}_2(\m_{\sX})$.
Recall that a \textit{dynamical optimal coupling} between $\mu_0,\mu_1\in \mathcal{P}_2(X)$ is a probability measure $\Pi$ 
on $\mathcal{P}(\mathcal{G}(X))$ such that $(e_0,e_1)_{\star}\Pi$ is an optimal coupling of $\mu_0$ and $\mu_1$. Then, the curve $t\in[0,1]\mapsto \mu_t=(e_t)_{\star}\Pi$ is
a geodesic in $\mathcal{P}_2(X)$ with respect to $W_2$. Moreover, for each geodesic $\mu_t$ in $\mathcal{P}_2(X)$ there exists a dynamical optimal plan $\Pi$. 
In rest of the article, 
we will not 
distinguish between $\mu_t$ and the corresponding probability measure $\Pi$ on $\mathcal{P}(X)$.
\section{$(\kappa,N)$-convex function and $EVI$ gradient flow curves}
\begin{theorem}[J. C. F. Sturm's comparison theorem]
Let $\kappa,\kappa':[a,b]\rightarrow \mathbb{R}$ be continuous function such that ${\kappa}'\geq {\kappa}\mbox{ on } [a,b]$ and $\frs_{\kappa'}>0$ on $(a,b]$. Then $\mathfrak{s}_{\kappa}\geq \mathfrak{s}_{{\kappa}'}$ on $[a,b]$.
\end{theorem}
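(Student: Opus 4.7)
The plan is the classical Wronskian/Sturm comparison argument. Recall that $\mathfrak{s}_\kappa$ denotes the generalized sine, i.e.\ the solution of the Jacobi-type ODE
\begin{equation*}
\mathfrak{s}_\kappa'' + \kappa\,\mathfrak{s}_\kappa = 0 \quad\text{on }[a,b],\qquad \mathfrak{s}_\kappa(a)=0,\ \mathfrak{s}_\kappa'(a)=1,
\end{equation*}
and analogously for $\mathfrak{s}_{\kappa'}$. The core idea is to introduce the Wronskian
\begin{equation*}
W(t) := \mathfrak{s}_\kappa(t)\,\mathfrak{s}_{\kappa'}'(t) - \mathfrak{s}_\kappa'(t)\,\mathfrak{s}_{\kappa'}(t),
\end{equation*}
and exploit that $W(a)=0$ by the matching initial conditions.

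Differentiating and using the two ODEs, the second-derivative terms cancel and one obtains
\begin{equation*}
W'(t) = -\kappa'(t)\,\mathfrak{s}_\kappa(t)\,\mathfrak{s}_{\kappa'}(t) + \kappa(t)\,\mathfrak{s}_\kappa(t)\,\mathfrak{s}_{\kappa'}(t) = \bigl(\kappa(t)-\kappa'(t)\bigr)\,\mathfrak{s}_\kappa(t)\,\mathfrak{s}_{\kappa'}(t).
\end{equation*}
On any subinterval where both $\mathfrak{s}_\kappa$ and $\mathfrak{s}_{\kappa'}$ are positive, the assumption $\kappa'\geq \kappa$ yields $W'\leq 0$, and since $W(a)=0$ we get $W\leq 0$ there. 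Dividing by $\mathfrak{s}_\kappa^2>0$ rewrites this as
\begin{equation*}
\frac{d}{dt}\!\left(\frac{\mathfrak{s}_{\kappa'}(t)}{\mathfrak{s}_\kappa(t)}\right) = \frac{\mathfrak{s}_{\kappa'}'\,\mathfrak{s}_\kappa - \mathfrak{s}_{\kappa'}\,\mathfrak{s}_\kappa'}{\mathfrak{s}_\kappa^2}(t) = -\frac{W(t)}{\mathfrak{s}_\kappa(t)^2} \geq 0,
\end{equation*}
whose sign I would chase carefully (depending on the sign convention for $W$). A L'Hospital-type computation at $t\to a^+$ shows $\mathfrak{s}_{\kappa'}/\mathfrak{s}_\kappa\to \mathfrak{s}_{\kappa'}'(a)/\mathfrak{s}_\kappa'(a)=1$; monotonicity of the quotient then gives $\mathfrak{s}_{\kappa'}\leq \mathfrak{s}_\kappa$ on the subinterval.

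The main obstacle, and the only nontrivial point, is justifying that $\mathfrak{s}_\kappa$ itself remains strictly positive on $(a,b]$, so that the quotient argument applies on the full interval. I would handle this by a contradiction/connectedness argument: let
\begin{equation*}
t_0 := \inf\{\, t\in(a,b]\,:\, \mathfrak{s}_\kappa(t)=0\,\},
\end{equation*}
with $t_0=b$ if this set is empty. By continuity and $\mathfrak{s}_\kappa'(a)=1>0$, one has $t_0>a$ and $\mathfrak{s}_\kappa>0$ on $(a,t_0)$. Since $\mathfrak{s}_{\kappa'}>0$ on $(a,b]$ by hypothesis, the Wronskian computation above applies on $(a,t_0)$ and produces $\mathfrak{s}_\kappa\geq \mathfrak{s}_{\kappa'}$ there. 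If $t_0\leq b$, passing to the limit $t\uparrow t_0$ yields $0=\mathfrak{s}_\kappa(t_0)\geq \mathfrak{s}_{\kappa'}(t_0)>0$, a contradiction. Hence $t_0=b$ (with $\mathfrak{s}_\kappa(b)>0$ by the same limit argument), and the desired inequality $\mathfrak{s}_\kappa\geq \mathfrak{s}_{\kappa'}$ holds on all of $[a,b]$, the boundary value at $a$ being trivial since both sides vanish.
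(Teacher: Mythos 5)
The paper quotes this classical theorem without giving a proof, so there is no in-paper argument to compare against; your Wronskian/quotient proof is the standard one and is correct in structure and in its conclusion. One bookkeeping remark: with $W=\mathfrak{s}_\kappa\mathfrak{s}_{\kappa'}'-\mathfrak{s}_\kappa'\mathfrak{s}_{\kappa'}$ the numerator of $\frac{d}{dt}\bigl(\mathfrak{s}_{\kappa'}/\mathfrak{s}_\kappa\bigr)$ is $+W$, not $-W$; since $W(a)=0$ and $W'=(\kappa-\kappa')\mathfrak{s}_\kappa\mathfrak{s}_{\kappa'}\le 0$ wherever both solutions are nonnegative, the quotient $\mathfrak{s}_{\kappa'}/\mathfrak{s}_\kappa$ is \emph{non-increasing}, and because its limit at $a^+$ equals $1$ this yields $\mathfrak{s}_{\kappa'}\le\mathfrak{s}_\kappa$ — so the displayed ``$\ge 0$'' should be ``$\le 0$'', a sign slip you flagged yourself and one that does not affect the argument, since the conclusion you draw is the correct one. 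Your treatment of the only delicate point, the positivity of $\mathfrak{s}_\kappa$ on $(a,b]$ via the first zero $t_0$ and the limit $t\uparrow t_0$ (using the hypothesis $\mathfrak{s}_{\kappa'}>0$ on $(a,b]$ to reach the contradiction $0\ge\mathfrak{s}_{\kappa'}(t_0)>0$), is exactly how this is closed; the proof is complete once the sign is fixed.
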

\noindent
A generalization of the previous theorem is the following result.
\begin{theorem}[Sturm-Picone oscillation theorem]
Let $\kappa,\kappa':[a,b]\rightarrow \mathbb{R}$ be continuous such that ${\kappa}'\geq {\kappa}\mbox{ on } [a,b]$. Let $u$ and $v$ be solutions of (\ref{ode}) with respect to $\kappa$ and $\kappa'$ respectively. If $u(a)=u(b)=0$
and $u>0$ on $(a,b)$, then either $u=\lambda v$ for some $\lambda>0$ or there exists $x_1\in (a,b)$ such that $v(x_1)=0$.
\end{theorem}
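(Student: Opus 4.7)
The plan is to use the Picone identity. Suppose for contradiction that $v$ has no zero in $(a,b)$; as $v\not\equiv 0$ (otherwise the conclusion is trivial) and $v$ is continuous, $v$ keeps one sign on $(a,b)$, and by replacing $v$ with $-v$ we may assume $v>0$ on $(a,b)$. A direct computation, using $u''=-\kappa u$ and $v''=-\kappa' v$, gives on the set where $v\neq 0$
\begin{equation*}
\frac{d}{dx}\!\left[uu'-\frac{u^2v'}{v}\right]=\left(u'-\frac{uv'}{v}\right)^{\!2}+(\kappa'-\kappa)\,u^2,
\end{equation*}
with both summands on the right non-negative on $(a,b)$ by the hypothesis $\kappa'\geq\kappa$ and $u>0$.

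Next I integrate this identity over $[a+\varepsilon,b-\varepsilon]$ and let $\varepsilon\to 0$. The step requiring care is that the boundary term $uu'-u^2v'/v$ vanishes at both endpoints. At $a$: since $u(a)=0$, $uu'\to 0$; moreover, if $v(a)\neq 0$ then $u^2 v'/v\to 0$ immediately, while if $v(a)=0$ then $v'(a)\neq 0$ (otherwise $v\equiv 0$ by uniqueness for the linear second-order ODE), so both $u$ and $v$ vanish linearly near $a$ and one gets $u^2 v'/v\sim u'(a)^2(x-a)\to 0$. The same argument applies at $b$. Passing to the limit yields
\begin{equation*}
0=\int_a^b\left[\left(u'-\frac{uv'}{v}\right)^{\!2}+(\kappa'-\kappa)\,u^2\right]dx.
\end{equation*}

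The integrand is a sum of two non-negative terms, so each vanishes identically on $(a,b)$. From $u'-uv'/v\equiv 0$ one obtains $(u/v)'\equiv 0$, hence $u=\lambda v$ on $(a,b)$ for some constant $\lambda$, and $\lambda>0$ since $u,v>0$ on $(a,b)$; by continuity the equality extends to $[a,b]$. The main obstacle in this argument is the boundary-term analysis when $v$ happens to vanish at an endpoint, which is resolved cleanly by the observation that any zero of a nontrivial solution of the linear second-order ODE must be simple. A secondary structural point worth noting is that the Picone identity as written presupposes $v\neq 0$ along the integration path, which is exactly the hypothesis under contradiction.
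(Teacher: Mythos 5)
The paper states this Sturm--Picone theorem as classical background and gives no proof of it, so there is no in-paper argument to compare against; your proof via the Picone identity is the standard route and it is correct. The identity computation (using $u''=-\kappa u$, $v''=-\kappa' v$), the limiting boundary-term analysis — including the delicate case where $v$ vanishes at an endpoint, which you resolve correctly by the simplicity of zeros of nontrivial solutions of the linear second-order equation — and the conclusion $(u/v)'\equiv 0$ on $(a,b)$ are all sound. One small mismatch with the literal statement: after your normalization $v\mapsto -v$ you obtain $u=\lambda v$ with $\lambda>0$ only for the normalized $v$; for the original $v$ your argument yields linear dependence, $u=\lambda v$ with $\lambda\neq 0$ of either sign. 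Since the paper's ``$\lambda>0$'' is itself only literally correct if $v$ is taken to be the normalized solution $\frs_{\kappa'}$ of (\ref{ode}) vanishing at $a$ with positive derivative (in which case $v>0$ just to the right of $a$ and no sign flip is needed), this is a defect of the statement rather than a gap in your argument, but it is worth flagging explicitly rather than absorbing it silently into a ``without loss of generality''.
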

\begin{definition}[generalized $\sin$-functions]\label{sin}
Let $\kappa:[0,L]\rightarrow \mathbb{R}$ be a continuous function.
The generalized $\sin$ function $\frs_{\kappa}:[0,L]\rightarrow \mathbb{R}$
is the unique solution of
\begin{align}\label{ode}
v''+\kappa v=0.
\end{align} 
such that $\mathfrak{s}_{\kappa}(0)=0$ and $\mathfrak{s}_{\kappa}'(0)=1$.
The generalized $\cos$-function is $\mathfrak{c}_{\kappa}=\frs_{\kappa}'$.
\end{definition}

\begin{definition}[generalized distortion coefficients]\label{generaldist}
Consider $\kappa:[0,{L}]\rightarrow \mathbb{R}$ that is continuous and $\theta \in [0,L]$. 
Then
\begin{align*}
\sigma_{\kappa}^{\sscr{(t)}}(\theta)=\begin{cases}
\frac{\frs_{{\kappa}}(t\theta)}{\frs_{{\kappa}}(\theta)}& \mbox{ if }\frs_{\kappa}(t)>0 \mbox{ for all }t\in (0,\theta]\\
\infty & \mbox{ otherwise }.
\end{cases}
\end{align*}
If $\sigma_{\kappa}^{\sscr{(t)}}(\theta)<\infty$, $t\mapsto\sigma_{\kappa}^{\sscr{(t)}}(\theta)$ is a solution of 
\begin{align}\label{klebeband}
u''(t)+\kappa(t\theta)\theta^2u(t)=0
\end{align}
satisfying $u(0)=0$ and $u(1)=1$. We set $\sigma_{\kappa}^{\sscr{(t)}}(1)=\sigma_{\kappa}^{\sscr{(t)}}$. Then
$\sigma_{\kappa}^{\sscr{(t)}}(\theta)=\sigma_{\kappa\theta^2}^{\sscr{(t)}}$.
\end{definition}
\noindent
If $\kappa:[0,L]\rightarrow \mathbb{R}$ is just lower semi-continuous, we can extend the previous definition in the following way. 
Define bounded continuous functions by
\begin{align}\label{uuu}
\kappa_n(x)=\min\left[\min_{y\in[0,L]}\left\{\kappa(y)+n|x-y|\right\},n\right]\ \ \ n\in\mathbb{N}.
\end{align}
The sequence $(\kappa_n)$ is montone increasing and converges pointwise to $\kappa$. Then, $\sigma_{\kappa_n}^{\sscr{(t)}}(\theta)$ is monotone increasing we define the generalized distortion coefficient with respect to $\kappa$ by
\begin{align*}
\sigma_{\kappa}^{\sscr{(t)}}(\theta)=\lim_{n\rightarrow\infty}\sigma_{\kappa_n}^{\sscr{(t)}}(\theta)\in[0,\infty].
\end{align*}
\begin{lemma}[\cite{ketterer5}]
Let $\kappa:[0,L]\rightarrow \mathbb{R}$ be lower semi-continuous and $\theta\in[0,L]$.
If $\sigma_{\kappa_{}}^{\sscr{(t_0)}}(\theta)=\infty$ for some $t_0\in (0,1)$ then $\sigma_{\kappa_{}}^{\sscr{(t)}}(\theta)=\infty$ for any $t\in (0,1)$.
\smallskip\\
In particular, either one has $\sigma_{\kappa_{}}^{\sscr{(t)}}(\theta)<\infty$ for any $t\in (0,1)$ and 
\begin{align*}
\sigma_{\kappa_{}}^{\sscr{(t)}}(\theta)={\frs_{\kappa_{}}(t\theta)}/{\frs_{\kappa}(\theta)}
\end{align*}
where $\frs_{\kappa_{}}(\theta)\neq 0$, or $\sigma_{\kappa_{}}^{\sscr{(\cdot)}}(\theta)\equiv\infty$. 
Here, $\frs_{\kappa}:[0,\theta]\rightarrow \mathbb{R}$ is the pointwise limit of the drecreasing sequence $\frs_{\kappa_n}:[0,\theta]\rightarrow \mathbb{R}$.
\end{lemma}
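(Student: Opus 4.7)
I would study the first positive zero $s_n := \inf\{s \in (0, L] : \frs_{\kappa_n}(s) = 0\}$ of each generalized sine (with $s_n = L$ if no such zero exists) and transfer its limiting behaviour to the normalized quotients $\sigma_{\kappa_n}^{\sscr{(t)}}(\theta)$. Since $\kappa_n \leq \kappa_{n+1}$, the Sturm--Picone oscillation theorem forces $s_{n+1} \leq s_n$, and J.~C.~F.~Sturm's comparison theorem gives $\frs_{\kappa_{n+1}} \leq \frs_{\kappa_n}$ on the common interval of positivity. Hence $s_\infty := \lim_n s_n$ exists in $[0, L]$, and the nonnegative monotone pointwise limit $\frs_\kappa(s) := \lim_n \frs_{\kappa_n}(s)$ is well defined on $[0, \theta]$. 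This is precisely the $\frs_\kappa$ appearing in the statement.

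\textbf{Case split.} I would compare $\theta$ with $s_\infty$. If $s_\infty > \theta$, then $s_n > \theta$ for all sufficiently large $n$, so $\frs_{\kappa_n}$ is strictly positive on $(0, \theta]$ and $\sigma_{\kappa_n}^{\sscr{(t)}}(\theta) = \frs_{\kappa_n}(t\theta)/\frs_{\kappa_n}(\theta)$. A standard Sturm--Liouville argument (using Sturm's comparison with the generalized sine of a sufficiently large constant upper bound of $\kappa$ on a slightly smaller subinterval, or equivalently positivity of the Dirichlet Green's function below the first eigenvalue) shows that $\frs_\kappa$ remains strictly positive on $(0, s_\infty)$; passing to the limit then yields the ratio formula $\sigma_\kappa^{\sscr{(t)}}(\theta) = \frs_\kappa(t\theta)/\frs_\kappa(\theta) < \infty$ for every $t \in (0,1)$. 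If $s_\infty \leq \theta$, then either some $s_{n_0} \leq \theta$, in which case $\frs_{\kappa_{n_0}}$ vanishes in $(0, \theta]$ and $\sigma_{\kappa_{n_0}}^{\sscr{(t)}}(\theta) = \infty$ by definition for every $t \in (0,1)$; or $s_n > \theta$ strictly but $s_n \downarrow \theta$, in which case $\frs_\kappa(t\theta) > 0$ for every $t \in (0,1)$ (since $t\theta < s_\infty$) while $\frs_\kappa(\theta) = 0$ (argued below), so $\sigma_{\kappa_n}^{\sscr{(t)}}(\theta) \to \infty$ for every such $t$.

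\textbf{Main obstacle.} The sharp step is establishing $\frs_\kappa(\theta) = 0$ when $s_n \downarrow \theta$ strictly from above, even though $\frs_{\kappa_n}(\theta) > 0$ for every individual $n$. I would argue by contradiction: if $\frs_\kappa(\theta) = c > 0$, then for each $n$ the continuous function $\frs_{\kappa_n}$ descends from a value $\geq c$ at $\theta$ to $0$ at $s_n$ over an interval of length $s_n - \theta \to 0$, so that the mean value theorem produces $\xi_n \in (\theta, s_n)$ with $|\frs_{\kappa_n}'(\xi_n)| \geq c/(s_n - \theta) \to \infty$. Coupled with the ODE $\frs_{\kappa_n}'' = -\kappa_n \frs_{\kappa_n}$ and the energy identity $\bigl((\frs_{\kappa_n}')^2 + \kappa_n \frs_{\kappa_n}^2\bigr)' = \kappa_n' \frs_{\kappa_n}^2$, such blow-up is incompatible with the boundedness of $\kappa_n$ on a fixed compact interval around $\theta$ provided by the approximation scheme (\ref{uuu}), yielding the contradiction. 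Combining the two cases produces the advertised dichotomy: in particular $\sigma_\kappa^{\sscr{(t_0)}}(\theta) = \infty$ at a single $t_0 \in (0,1)$ rules out Case~A, and therefore $\sigma_\kappa^{\sscr{(t)}}(\theta) = \infty$ for every $t \in (0,1)$.
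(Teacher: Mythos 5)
Your case analysis by first zeros does not decide the dichotomy, and both halves of it rest on upper bounds for $\kappa$ that a merely lower semi-continuous function need not have. In Case A you assert that the decreasing limit $\frs_{\kappa}$ stays strictly positive on $(0,s_\infty)$, justified by comparison with ``a sufficiently large constant upper bound of $\kappa$''; an lsc function on $[0,L]$ can be unbounded above, so no such constant exists, and the assertion itself fails. Concretely, take $\kappa(x)=\frac{1}{4x^2}$ for $x>0$ and $\kappa(0)=0$ (lsc, real-valued, the critical non-oscillatory Euler potential): for the approximants (\ref{uuu}) one checks that no $\frs_{\kappa_n}$ vanishes in $(0,L]$ (so $s_n\equiv L>\theta$, i.e.\ your Case A), while matching the explicit Euler solutions $\sqrt{y}(\alpha_n+\beta_n\log y)$ at the shrinking truncation scale gives $\beta_n\to 0$, $\alpha_n/\beta_n\to\infty$, hence $\frs_{\kappa_n}\downarrow 0$ pointwise on $(0,\theta]$ and $\frs_{\kappa_n}(t\theta)/\frs_{\kappa_n}(\theta)\to\sqrt{t}$. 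So in Case A you cannot ``pass to the limit in the quotient'' (the denominator may tend to $0$), the position of the zeros $s_n$ simply does not control whether $\sigma_\kappa^{(t)}(\theta)$ is finite, and the same mechanism refutes the Case B claim that $t\theta<s_\infty$ forces $\frs_\kappa(t\theta)>0$. The ``main obstacle'' step has the same defect: (\ref{uuu}) only yields $\kappa_n\le n$ and $\kappa_n\le\kappa$, not a bound on $\kappa_n$ near $\theta$ that is uniform in $n$, unless $\kappa$ happens to be locally bounded above there; and even granting such a bound, blow-up of $|\frs_{\kappa_n}'(\xi_n)|$ at one point is not by itself incompatible with the ODE -- you would still have to propagate the large slope a definite distance backwards to force a zero before $\theta$ and reach a contradiction. (This example also indicates that the representation half of the lemma is genuinely delicate for unbounded lsc $\kappa$; in any case your argument does not establish it.)

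The first, and for this paper the essential, assertion has a softer proof that needs no zeros and no upper bounds. If some $\frs_{\kappa_{n_0}}$ vanishes in $(0,\theta]$ then $\sigma_{\kappa_{n_0}}^{(t)}(\theta)=\infty$ for every $t$ by definition and monotonicity finishes. Otherwise every $u_n(t):=\sigma_{\kappa_n}^{(t)}(\theta)$ is a positive solution of (\ref{klebeband}) on $(0,1]$ with potential $\kappa_n(t\theta)\theta^2\ge -K$, where $K:=\theta^2\max(0,-\min\kappa_1)$ is independent of $n$. Applying the comparison of Proposition \ref{central}(iii)/Proposition \ref{monotonicity} with the constant $-K$ on $[0,t]$ (resp.\ on $[t,1]$, using $u_n(1)=1\ge 0$) gives, for all $t_0,t\in(0,1)$, the two-sided bound $u_n(t)\le C(t,t_0,K)\,u_n(t_0)$ with $C$ built from $\sinh$-quotients and independent of $n$; hence boundedness of $(u_n(t_0))_n$ at one interior point is equivalent to boundedness at every interior point, which is exactly the statement that $\sigma_\kappa^{(t_0)}(\theta)=\infty$ for one $t_0\in(0,1)$ implies $\sigma_\kappa^{(t)}(\theta)=\infty$ for all $t\in(0,1)$. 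The second half (positivity of $\frs_\kappa(\theta)$ and the quotient formula in the finite case) is precisely the point where one must exclude the collapse $\frs_{\kappa_n}(\theta)\to 0$ with bounded ratios, and your proposal does not do this; by the example above it cannot be done by Sturm comparison from above without further assumptions on $\kappa$ (e.g.\ local boundedness), so this part has to be taken from \cite{ketterer5} under whatever hypotheses are imposed there.
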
 
\begin{remark}\label{someotherremark}
$\frs_{\kappa}$ is upper semi-continuous. Whence, $\sigma_{\kappa}^{\sscr{(t)}}(\theta)<\infty$ is upper semi-continuous in $t$. On the other hand, 
$\sigma_{\kappa}^{\sscr{(t)}}(\theta)$ is the limit of an non-decreasing sequence of continuous function. Therefore, $\sigma_{\kappa}^{\sscr{(t)}}(\theta)<\infty$ is continuous, 
and the sequence in (\ref{uuu}) converges uniformily.
The definition of $\sigma_{\kappa_{}}^{\sscr{(\cdot)}}(\theta)$ does not depend on the increasing sequence $\kappa_n$ that converges pointwise to $\kappa$.
\end{remark}
\begin{lemma}
If $\sigma^{\sscr{(t)}}_{\kappa}(\theta)<\infty$, we have
\begin{align*}
\sigma_{\kappa_{}}^{\sscr{(t)}}(\theta)=\int_0^1g(s,t)\theta^2\kappa\circ\gamma(s)\sigma_{\kappa_{}}^{\sscr{(s)}}(\theta)ds+t.
\end{align*}
with
$g(s,t)$ beeing the Green function of $[0,1]$. 
\end{lemma}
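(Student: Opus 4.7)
The plan is to establish the identity first when $\kappa$ is continuous, through the classical Green function representation of a second order linear ODE with Dirichlet data, and then extend to lower semi-continuous $\kappa$ by passing to the limit along the monotone approximants $(\kappa_n)$ from \eqref{uuu}.

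For continuous $\kappa$, I set $u(t) := \sigma_\kappa^{\sscr{(t)}}(\theta)$. By Definition \ref{generaldist}, $u$ solves $u''(t) + \kappa(t\theta)\theta^2 u(t) = 0$ with $u(0) = 0$ and $u(1) = 1$. Subtracting the affine function carrying the boundary data, I set $w(t) := u(t) - t$, so $w(0) = w(1) = 0$ and $-w''(t) = \kappa(t\theta)\theta^2 u(t)$. Since the Green function for $-\partial_t^2$ on $[0,1]$ with homogeneous Dirichlet conditions is
\begin{equation*}
g(s,t) = \begin{cases} s(1-t) & \text{if } 0 \le s \le t,\\ t(1-s) & \text{if } t \le s \le 1, \end{cases}
\end{equation*}
the standard representation $w(t) = \int_0^1 g(s,t)(-w''(s))\,ds$ yields
\begin{equation*}
\sigma_\kappa^{\sscr{(t)}}(\theta) = t + \int_0^1 g(s,t)\,\theta^2 \kappa(s\theta)\, \sigma_\kappa^{\sscr{(s)}}(\theta)\,ds,
\end{equation*}
which is the asserted identity under the natural identification $\kappa \circ \gamma(s) = \kappa(s\theta)$.

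For lower semi-continuous $\kappa$ the continuous functions $\kappa_n$ from \eqref{uuu} each satisfy the formula by the previous step. By Remark \ref{someotherremark} the sequence $\sigma_{\kappa_n}^{\sscr{(\cdot)}}(\theta)$ converges uniformly on $[0,1]$ to $\sigma_\kappa^{\sscr{(\cdot)}}(\theta)$, so the left-hand sides converge. To pass the right-hand sides to the limit I split
\begin{equation*}
\kappa_n(s\theta)\sigma_{\kappa_n}^{\sscr{(s)}}(\theta) = (\kappa_n(s\theta) - \kappa_1(s\theta))\,\sigma_{\kappa_n}^{\sscr{(s)}}(\theta) + \kappa_1(s\theta)\,\sigma_{\kappa_n}^{\sscr{(s)}}(\theta).
\end{equation*}
Both $\kappa_n - \kappa_1 \ge 0$ and $\sigma_{\kappa_n}^{\sscr{(s)}}(\theta) \ge 0$ are monotone non-decreasing in $n$, so the first product is monotone non-decreasing and non-negative and monotone convergence applies; the second product converges by bounded convergence, since $\kappa_1$ is bounded on $[0,\theta]$ and $\sigma_{\kappa_n}^{\sscr{(s)}}(\theta)$ is uniformly bounded by the finite limit $\sigma_\kappa^{\sscr{(s)}}(\theta)$. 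Substituting into the identity for $\kappa_n$ and letting $n \to \infty$ gives the result.

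The main technical point is the interchange of limit and integral in the extension step, which is delicate because $\kappa_n$ is not assumed non-negative and $\kappa$ may not be bounded above. The splitting trick reduces everything to monotone convergence plus a bounded-convergence remainder, and the hypothesis $\sigma_\kappa^{\sscr{(t)}}(\theta) < \infty$ is exactly what ensures the limiting integrand is integrable, so no further growth assumption on $\kappa$ is needed.
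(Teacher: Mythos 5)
Your proof is correct and takes essentially the same route as the paper: the continuous case via the Dirichlet Green function on $[0,1]$, followed by the monotone approximation $\kappa_n\uparrow\kappa$ with uniform convergence of $\sigma_{\kappa_n}^{\sscr{(\cdot)}}(\theta)$ (Dini). The only divergence is the justification of the limit--integral interchange --- you split off $\kappa_1$ and combine monotone with bounded convergence, whereas the paper truncates at level $M$ and then applies dominated convergence --- but these are interchangeable technical devices and both correctly use the hypothesis $\sigma_{\kappa}^{\sscr{(t)}}(\theta)<\infty$ to get integrability of the limiting integrand.
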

\begin{proof}
If $\kappa$ is continuous, this is clear.
If $\kappa$ is lower semi-continuous, we can choose $\kappa_n\uparrow \kappa$.
Then $\sigma^{\sscr{(t)}}_{\kappa_n}(\theta)\uparrow \sigma^{\sscr{(t)}}_{\kappa}(\theta)$ uniformily as $n\rightarrow \infty$ by Dini's theorem.
Then
\begin{align}\label{in}
\int_0^1\!g(s,t)\theta^2\kappa\circ\gamma(s)\sigma_{\kappa_{}}^{\sscr{(s)}}(\theta)ds&
=\lim_{M\rightarrow \infty}\int_0^1\!g(s,t)\theta^2\kappa\circ\gamma(s)\sigma_{\kappa_{}}^{\sscr{(s)}}(\theta)\wedge M ds\nonumber\\
&=\lim_{M\rightarrow \infty}\lim_{n\rightarrow\infty}\int_0^1\!g(s,t)\theta^2\kappa_n\circ\gamma(s)\sigma_{\kappa_{n}}^{\sscr{(s)}}(\theta)\wedge M ds\nonumber\\
&\leq\liminf_{n\rightarrow\infty}\int_0^1\!g(s,t)\theta^2\kappa_n\circ\gamma(s)\sigma_{\kappa_{n}}^{\sscr{(s)}}(\theta) ds=\sigma^{\sscr{(t)}}_{\kappa}(\theta)+t<\infty
\end{align}
where $t$ is fixed and $M>0$.
Hence, $g(s,t)\theta^2\kappa\circ\gamma(s)\sigma_{\kappa_{}}^{\sscr{(s)}}(\theta)$ is integrable in $s\in[0,1]$. If we apply the theorem of dominated convergence 
to $g(s,t)\theta^2\kappa_n\circ\gamma(s)\sigma_{\kappa_{n}}^{\sscr{(s)}}(\theta)$ we obtain equality in (\ref{in}).
\end{proof}
\begin{proposition}[\cite{ketterer5}]\label{monotonicity}
$\sigma^{\sscr{(t)}}_{\kappa}(\theta)$ is non-decreasing with respect to $\kappa:[0,\theta]\rightarrow \mathbb{R}$. More precisely
\begin{align*}
\kappa(x)\geq\kappa'(x) \ \forall x\in [0,\theta]\ \ \mbox{implies} \ \  \sigma^{\sscr{(t)}}_{\kappa}(\theta)\geq \sigma^{\sscr{(t)}}_{\kappa'}(\theta)\ \forall t\in [0,1].
\end{align*}
\end{proposition}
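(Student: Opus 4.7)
The proof will proceed by first reducing to the continuous case, then exploiting a Wronskian-type monotonicity identity, and finally passing to the limit via the standard Moreau--Yosida approximation. By the scaling identity $\sigma_{\kappa}^{\sscr{(t)}}(\theta)=\sigma_{\kappa\theta^2}^{\sscr{(t)}}(1)$ recorded in Definition \ref{generaldist}, it suffices to prove the following: if $\kappa,\kappa':[0,1]\to\mathbb{R}$ satisfy $\kappa\geq\kappa'$ pointwise, then $\sigma_{\kappa}^{\sscr{(t)}}(1)\geq \sigma_{\kappa'}^{\sscr{(t)}}(1)$ for every $t\in[0,1]$.

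Assume first that $\kappa,\kappa'$ are continuous. If $\sigma_{\kappa}^{\sscr{(t)}}(1)=\infty$ there is nothing to prove, so assume $\frs_\kappa>0$ on $(0,1]$. Applying Sturm's comparison theorem (with the roles of the two functions interchanged relative to its statement) to $\kappa\geq\kappa'$ gives $\frs_{\kappa'}\geq \frs_\kappa>0$ on $(0,1]$; in particular $\sigma_{\kappa'}^{\sscr{(t)}}(1)<\infty$ as well. Introduce the Wronskian
\begin{equation*}
W(t):=\frs_{\kappa'}'(t)\frs_\kappa(t)-\frs_{\kappa'}(t)\frs_\kappa'(t).
\end{equation*}
The initial conditions $\frs_\kappa(0)=\frs_{\kappa'}(0)=0$ and $\frs_\kappa'(0)=\frs_{\kappa'}'(0)=1$ yield $W(0)=0$, while the ODEs $\frs_\kappa''=-\kappa\frs_\kappa$ and $\frs_{\kappa'}''=-\kappa'\frs_{\kappa'}$ give
\begin{equation*}
W'(t)=\bigl(\kappa(t)-\kappa'(t)\bigr)\frs_\kappa(t)\frs_{\kappa'}(t)\geq 0,
\end{equation*}
so $W\geq 0$ on $[0,1]$. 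Since $\tfrac{d}{dt}\bigl(\frs_{\kappa'}/\frs_\kappa\bigr)=W/\frs_\kappa^2\geq 0$, the quotient $\frs_{\kappa'}/\frs_\kappa$ is non-decreasing on $(0,1]$, hence $\frs_{\kappa'}(t)/\frs_\kappa(t)\leq \frs_{\kappa'}(1)/\frs_\kappa(1)$. Rearranging yields exactly $\sigma_\kappa^{\sscr{(t)}}(1)\geq \sigma_{\kappa'}^{\sscr{(t)}}(1)$.

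For general lower semi-continuous $\kappa,\kappa'$, approximate by the continuous increasing sequences $\kappa_n\uparrow \kappa$ and $\kappa'_n\uparrow \kappa'$ defined in (\ref{uuu}). The pointwise ordering is preserved under this regularization: from $\kappa(y)+n|x-y|\geq \kappa'(y)+n|x-y|$ one obtains $\min_y\{\kappa(y)+n|x-y|\}\geq \min_y\{\kappa'(y)+n|x-y|\}$, and capping at $n$ respects the inequality, so $\kappa_n\geq \kappa'_n$ for each $n$. The continuous case therefore gives $\sigma_{\kappa_n}^{\sscr{(t)}}(\theta)\geq \sigma_{\kappa'_n}^{\sscr{(t)}}(\theta)$, and sending $n\to\infty$ with the defining identity $\sigma_\kappa^{\sscr{(t)}}(\theta)=\lim_n \sigma_{\kappa_n}^{\sscr{(t)}}(\theta)$ concludes the proof.

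The only subtle point is ensuring that $\frs_{\kappa'}$ does not vanish on $(0,1]$ when $\frs_\kappa$ does not; this is precisely the content of the Sturm comparison theorem recalled at the start of Section 3. Beyond that, the argument is a clean Wronskian computation together with the monotone approximation that was already used throughout the excerpt, so I do not anticipate any serious obstacle.
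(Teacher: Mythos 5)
Your proof is correct. Note, however, that the paper does not prove Proposition \ref{monotonicity} at all: it is imported from \cite{ketterer5} without argument, so there is no in-paper proof to measure yours against. On its own merits, your route is sound and uses exactly the tools the paper assembles: the scaling identity of Definition \ref{generaldist} to reduce to $\theta=1$, Sturm's comparison theorem (with the roles of the two curvature functions swapped, as you say) to guarantee $\frs_{\kappa'}\geq\frs_{\kappa}>0$ — which is indeed the crucial point, since it rules out the only dangerous configuration $\sigma_{\kappa}^{\sscr{(t)}}(\theta)<\infty$ while $\sigma_{\kappa'}^{\sscr{(t)}}(\theta)=\infty$ — then the Wronskian identity $W'=(\kappa-\kappa')\frs_{\kappa}\frs_{\kappa'}\geq 0$ giving monotonicity of the ratio $\frs_{\kappa'}/\frs_{\kappa}$, which upon evaluating at $t$ and $1$ is precisely the claimed inequality between the distortion coefficients. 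Your handling of the lower semi-continuous case is also correct: the inf-convolution regularization (\ref{uuu}) preserves the pointwise order (minimum over $y$ and capping at $n$ are both monotone operations), and the inequality survives the monotone limit defining $\sigma_{\kappa}^{\sscr{(t)}}(\theta)$ in $[0,\infty]$. The only cosmetic caveat is that the Wronskian computation needs $\frs_{\kappa},\frs_{\kappa'}\in C^2$, which your reduction to continuous $\kappa,\kappa'$ before approximating already ensures, so there is no gap.
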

\begin{lemma}\label{uff}
Let $\kappa_i, \kappa:[0,L]\rightarrow \mathbb{R}$ be lower semi-continuous, such that
\begin{align*}
\liminf_{i\rightarrow\infty}\kappa_i(x)\geq \kappa(x)
\end{align*}
for any $x\in [0,L]$. Then
$
\liminf_{i\rightarrow \infty}\sigma_{\kappa_i}^{(t)}(\theta)\geq \sigma_{\kappa}^{(t)}(\theta)
$
for any $t\in[0,1]$.
\end{lemma}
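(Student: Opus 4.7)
My plan is a two-step reduction followed by a Fatou argument on the integral equation of the preceding lemma.

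First, let $\kappa_n$ denote the $n$-Lipschitz continuous approximations of $\kappa$ from equation~(\ref{uuu}), so that $\sigma_\kappa^{(t)}(\theta)=\sup_n \sigma_{\kappa_n}^{(t)}(\theta)$ by definition. Since the hypothesis gives $\liminf_i \kappa_i\ge \kappa\ge \kappa_n$ pointwise for every $n$, it suffices to prove the lemma whenever the target function is continuous; the general case then follows by applying the continuous case with target $\kappa_n$ and taking the supremum in $n$. The case $\sigma_\kappa^{(t)}(\theta)=+\infty$ is automatic in this reduction since then $\sigma_{\kappa_n}^{(t)}(\theta)\to\infty$.

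Assuming $\kappa$ continuous, I choose $M$ with $-M\le\min_{[0,L]}\kappa$ and set $\tilde\kappa_i:=(\kappa_i\wedge\kappa)\vee(-M)$. This $\tilde\kappa_i$ is lower semi-continuous, uniformly bounded in $[-M,\max\kappa]$, dominated both by $\kappa$ and by $\kappa_i$, and converges to $\kappa$ pointwise (from $\liminf_i\kappa_i\ge\kappa\ge -M$). By Proposition~\ref{monotonicity}, $\sigma_{\tilde\kappa_i}\le\sigma_{\kappa_i}$ and $\sigma_{\tilde\kappa_i}\le\sigma_\kappa$ uniformly, so it is enough to show $\liminf_i \sigma_{\tilde\kappa_i}^{(t)}(\theta)\ge \sigma_\kappa^{(t)}(\theta)$. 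The integrand $g(s,t)\theta^2\tilde\kappa_i(s\theta)\sigma_{\tilde\kappa_i}^{(s)}(\theta)$ in the integral equation of the preceding lemma is now uniformly bounded in $i$, so Fatou's lemma applies. Passing the liminf inside the integral and splitting $\kappa=\kappa^+-\kappa^-$---using the pointwise convergence $\tilde\kappa_i\to\kappa$ together with the boundedness of $\sigma_{\tilde\kappa_i}$ to evaluate $\liminf_i(\tilde\kappa_i\sigma_{\tilde\kappa_i})$ case-by-case on the sign of $\kappa$---one obtains, after using the upper bound $\limsup_i\sigma_{\tilde\kappa_i}\le\sigma_\kappa$ to absorb the $\kappa^-$ contribution, the integral inequality
\[
L(t)-\sigma_\kappa^{(t)}(\theta)\ge \int_0^1 g(s,t)\theta^2\kappa^+(s\theta)\bigl(L(s)-\sigma_\kappa^{(s)}(\theta)\bigr)\,ds,
\]
where $L(t):=\liminf_i\sigma_{\tilde\kappa_i}^{(t)}(\theta)\le\sigma_\kappa^{(t)}(\theta)$.

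The main obstacle is closing this positive-operator inequality. Writing $\eta:=L-\sigma_\kappa\le 0$ and $T$ for the Green-function convolution operator with kernel $g(s,t)\theta^2\kappa^+(s\theta)$, the inequality becomes $\eta\ge T\eta$; iterating and using the positivity of $T$ yields a monotone non-increasing sequence $T^n\eta$ which is bounded below (since $\sigma_\kappa$ and $L$ are bounded), hence converges to a fixed point $\eta^*=T\eta^*$ with $\eta^*\le 0$. The finiteness $\sigma_\kappa^{(t)}(\theta)<\infty$ means $\kappa$ has no conjugate point on $[0,\theta]$; after subdividing the interval if necessary to drive the spectral radius of $T$ strictly below one (so that $1$ is not an eigenvalue of the restriction), one forces $\eta^*\equiv 0$, and hence $\eta\equiv 0$. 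Combining $\liminf_i\sigma_{\tilde\kappa_i}^{(t)}(\theta)=\sigma_\kappa^{(t)}(\theta)$ with $\sigma_{\tilde\kappa_i}\le\sigma_{\kappa_i}$ concludes the proof.
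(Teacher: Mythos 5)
Your reduction to a continuous target $\kappa$ via the approximations $\kappa_n$ is fine, but the truncation step breaks down, and it breaks down exactly where the real difficulty of the lemma lies. For $\tilde\kappa_i=(\kappa_i\wedge\kappa)\vee(-M)$ you claim $\tilde\kappa_i\le\kappa_i$, hence $\sigma_{\tilde\kappa_i}^{(t)}(\theta)\le\sigma_{\kappa_i}^{(t)}(\theta)$ by Proposition~\ref{monotonicity}; this is false wherever $\kappa_i<-M$, since there $\tilde\kappa_i=-M>\kappa_i$. The hypothesis $\liminf_i\kappa_i(x)\ge\kappa(x)$ is only pointwise, so no choice of $M$ makes $\kappa_i\ge -M$ on all of $[0,L]$ for large $i$: each $\kappa_i$ may have deep narrow wells at moving locations. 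Monotonicity then goes the wrong way, and your final sentence (transferring the conclusion from $\tilde\kappa_i$ back to $\kappa_i$) is unjustified. This is not a cosmetic issue: wells of depth $c_i$ and width $w_i$ with $c_iw_i$ bounded away from zero are compatible with the pointwise liminf hypothesis but act like a nontrivial attractive delta potential in the ODE $u''+\kappa_i(\cdot\,\theta)\theta^2u=0$ and genuinely pull $\sigma_{\kappa_i}^{(t)}(\theta)$ below $\sigma_{\kappa}^{(t)}(\theta)$ in the limit; a purely pointwise Fatou argument that first truncates such wells away cannot detect this, whereas the paper's proof explicitly works with the bound $\kappa_i\ge\kappa-\epsilon$ holding on all of $[0,L]$ for $i$ large, combined with monotonicity and stability of $\sigma$ under uniform perturbations of $\kappa$ --- a quantitatively different (uniform) use of the hypothesis, with no Fatou step at all.

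Independently, the closing positive-operator argument is incomplete. From $\eta\le 0$ and $\eta\ge T\eta$, where $T$ has kernel $g(s,t)\theta^2\kappa^{+}(s\theta)$, you can only conclude $\eta\equiv 0$ if the spectral radius of $T$ is strictly less than $1$; but the finiteness $\sigma_\kappa^{(t)}(\theta)<\infty$ controls conjugate points of $\kappa$, not of $\kappa^{+}\ge\kappa$. For instance with $\theta^2\kappa^{+}\equiv\Lambda\ge\pi^2$ the function $\eta(t)=-\sin(\pi t)$ satisfies $\eta\le0$ and $\eta\ge T\eta$, so the inequality alone does not force $\eta\equiv0$. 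The proposed remedy ``subdivide the interval to drive the spectral radius below one'' is not carried out and is not routine, because the inequality involves the Green function of the whole interval $[0,1]$: restricting $t$ to a subinterval does not decouple it from the values of $\eta$ elsewhere, so some propagation argument would have to be supplied. In short, both the transfer from $\tilde\kappa_i$ to $\kappa_i$ and the rigidity step $\eta\equiv0$ are genuine gaps; the paper's monotonicity-plus-uniform-stability route avoids both.
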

\begin{proof} First, we assume that any distortion coefficient is finite.
Let $\epsilon>0$, and let $i_0\in\mathbb{N}$ such that for each $i\geq i_0$ 
\begin{align*}
\kappa_i(x)\geq \kappa(x)-\epsilon\geq \kappa_n-\epsilon
\end{align*}
for each $x\in[0,L]$ and $\kappa_n$ as before. By monotonicity, we have 
\begin{align*}
\liminf_{i\rightarrow\infty} \sigma_{\kappa_i}^{(t)}(\theta)\geq \sigma_{\kappa-\epsilon}^{(t)}(\theta)
\end{align*}
for each $t\in[0,1]$. $\kappa_n$ is a sequence of monotone increasing continuous functions converging to $\kappa$. 
Then, by stability of $\sigma_{\kappa}^{(t)}$ with respect to uniform change of $\kappa$, for each $\delta>0$ there exists $\epsilon>0$ such
that 
\begin{align*}
\sigma_{\kappa}^{(t)}(\theta)=\lim_{n\rightarrow\infty} \sigma_{\kappa_n}^{(t)}(\theta)\leq\lim_{n\rightarrow \infty}\sigma_{\kappa_n-\epsilon}^{(t)}(\theta)+\delta\leq \sigma_{\kappa-\epsilon}^{(t)}(\theta)+\delta\leq \liminf_{i\rightarrow \infty}\sigma_{\kappa_i}^{(t)}(\theta)+\delta
\end{align*}
where the first equality is the definition of $\sigma_{\kappa}^{(t)}(\theta)$ and the first inequality is continuity with respect to uniform changes of $\kappa_n$.
The only case that is left is when $\sigma_{\kappa}^{\sscr{(t)}}(\theta)=\infty$. But then by monotonicity it follows for a subsequence that 
$\kappa_i\uparrow \kappa$. From that we can deduce that $
\lim_{i\rightarrow \infty}\sigma_{\kappa_i}^{(t)}(\theta)=\infty
$ for any $t\in [0,1]$ by comparison the coefficients w.r.t. $\kappa_i$ and with the coefficients w.r.t $\kappa_n\uparrow \kappa$. 
\end{proof}
\noindent 
We define $\kappa^-:[0,\theta]\rightarrow\mathbb{R}$ by $\kappa^-(x)=\kappa(\theta-x)$.
\begin{lemma}[\cite{ketterer5}]\label{gr}
Consider $\kappa,\kappa':[0,\theta]\rightarrow \mathbb{R}$. Then
\begin{align*}
\sigma^{\sscr (t)}_{\kappa_{}}(\theta)^{1-\lambda}\cdot\sigma^{\sscr (t)}_{\kappa_{}'}(\theta)^{\lambda}\geq \sigma^{\sscr (t)}_{(1-\lambda)\kappa_{}+\lambda\kappa_{}'}(\theta).
\end{align*}
Especially, $\kappa\in LSC([0,1])\mapsto \log\sigma_{\kappa_{}}$ and $\kappa\mapsto \log\sigma_{\kappa^-_{}}$ are convex. $LSC([0,1])$ denotes the space of lower semi-continuous functions.
\end{lemma}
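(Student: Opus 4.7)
The plan is to reduce to continuous $\kappa,\kappa'$, prove the inequality there by an ODE comparison, then lift back by monotone approximation. Using the sequence $\kappa_n$ in \eqref{uuu} we have $(1-\lambda)\kappa_n+\lambda\kappa_n'\uparrow(1-\lambda)\kappa+\lambda\kappa'$, so by the definition of $\sigma^{\sscr{(t)}}$ for lower semi-continuous data, the continuous-case inequality passes to the limit as $n\to\infty$. Hence we may assume $\kappa,\kappa'$ continuous.

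Set $a(t)=\sigma^{\sscr{(t)}}_\kappa(\theta)$, $b(t)=\sigma^{\sscr{(t)}}_{\kappa'}(\theta)$, and assume both are finite (otherwise the left-hand side is $+\infty$ and the inequality is trivial). Then $a,b$ solve \eqref{klebeband} with $a(0)=b(0)=0$, $a(1)=b(1)=1$, and $a,b>0$ on $(0,1]$. Put $w=a^{1-\lambda}b^\lambda$. Using $w''/w=(\log w)''+((\log w)')^2$ together with $(\log a)''=-\kappa(t\theta)\theta^2-(a'/a)^2$ and the analogue for $b$,
\begin{align*}
\frac{w''}{w}=-K(t)-\Bigl\{(1-\lambda)\Big(\frac{a'}{a}\Big)^2+\lambda\Big(\frac{b'}{b}\Big)^2-\Big((1-\lambda)\frac{a'}{a}+\lambda\frac{b'}{b}\Big)^2\Bigr\},
\end{align*}
with $K(t)=[(1-\lambda)\kappa+\lambda\kappa'](t\theta)\theta^2$. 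Jensen's inequality for $x\mapsto x^2$ makes the braced term $\ge0$, so $w''+Kw\le0$ on $(0,1]$.

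Let $v(t)=\sigma^{\sscr{(t)}}_{(1-\lambda)\kappa+\lambda\kappa'}(\theta)$. Before comparing $w$ and $v$ one must show $v<\infty$. If not, $\frs_{(1-\lambda)\kappa+\lambda\kappa'}$ has a first zero at some $s_0\in(0,\theta]$, and $\tilde v(t)=\frs_{(1-\lambda)\kappa+\lambda\kappa'}(t\theta)$ satisfies $\tilde v''+K\tilde v=0$ on $[0,t_0]$ with $t_0=s_0/\theta$, $\tilde v(0)=\tilde v(t_0)=0$, $\tilde v>0$ on $(0,t_0)$, and $\tilde v'(0)=\theta$. The Wronskian $W=\tilde v'w-\tilde v w'$ has $W(0)=0$, and using $w''\le-Kw$ and $\tilde v\ge0$ one gets $W'=\tilde v''w-\tilde v w''=-K\tilde v w-\tilde v w''\ge0$, so $W(t_0)\ge0$; but $\tilde v(t_0)=0$, $\tilde v'(t_0)\le0$, $w(t_0)>0$ force $W(t_0)\le0$, hence $\tilde v'(t_0)=0$, contradicting ODE uniqueness. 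Now $v$ genuinely solves $v''+Kv=0$ with $v(0)=0$, $v(1)=1$, $v>0$ on $(0,1]$, and the standard Wronskian $\widetilde W=w'v-wv'$ satisfies $\widetilde W(0)=0$ and $\widetilde W'=w''v-wv''\le-Kwv+Kwv=0$, so $(w/v)'=\widetilde W/v^2\le0$ on $(0,1]$; combined with $(w/v)(1)=1$ this yields $w\ge v$ on $[0,1]$.

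The ``especially'' clause is the case $\theta=1$: the main inequality becomes $\sigma_\kappa^{1-\lambda}\sigma_{\kappa'}^\lambda\ge\sigma_{(1-\lambda)\kappa+\lambda\kappa'}$, i.e.\ convexity of $\kappa\mapsto\log\sigma_\kappa$ after taking logs. For $\kappa\mapsto\log\sigma_{\kappa^-}$ note that reflection $\kappa\mapsto\kappa^-$ is linear, so $((1-\lambda)\kappa+\lambda\kappa')^-=(1-\lambda)\kappa^-+\lambda\kappa'^-$, and applying the first part to the reflected functions gives convexity of $\kappa\mapsto\log\sigma_{\kappa^-}$. The main obstacle is the finiteness step for $v$: without it the Wronskian comparison has no anchor, and a Sturm-Picone-style argument is needed to rule out that the convex combination $(1-\lambda)\kappa+\lambda\kappa'$ acquires a conjugate point that neither $\kappa$ nor $\kappa'$ has.
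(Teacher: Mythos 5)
The paper itself offers no proof of this lemma (it is imported from \cite{ketterer5}), so there is nothing in-text to compare against; judged on its own, your argument is correct and complete. The log-derivative computation giving $w''+Kw\le 0$ for $w=a^{1-\lambda}b^{\lambda}$ is right, and you correctly identified and closed the one genuinely non-trivial point: that $\sigma^{\sscr{(t)}}_{(1-\lambda)\kappa+\lambda\kappa'}(\theta)$ must be finite whenever both factors are, which your first Wronskian (Sturm--Picone type) argument settles; the second Wronskian comparison then gives $w\ge v$ from monotonicity of $w/v$ together with the normalization $w(1)=v(1)=1$, and the ``especially'' clause follows as you say since reflection is linear. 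Two points you should make explicit rather than leave implicit: first, in the reduction to continuous data you use that $\lim_n\sigma^{\sscr{(t)}}_{(1-\lambda)\kappa_n+\lambda\kappa'_n}(\theta)=\sigma^{\sscr{(t)}}_{(1-\lambda)\kappa+\lambda\kappa'}(\theta)$ even though $(1-\lambda)\kappa_n+\lambda\kappa'_n$ is not the canonical sequence (\ref{uuu}) associated to the convex combination; this is exactly the independence of the approximating increasing sequence recorded in Remark \ref{someotherremark} and should be cited (or reproved via a Dini argument). Second, the Wronskians are anchored at $t=0$ where $w$ is only defined by continuity, so one should note that $a,b\in C^1$ with $a(0)=b(0)=0$ and $a'(0),b'(0)>0$ make $w$ differentiable at $0$ with $w(0)=0$ and $w'(0)$ finite, so that $W(0)=\widetilde W(0)=0$ indeed holds; together with the trivial cases $\lambda\in\{0,1\}$ and an infinite factor (where the convexity statement is understood with values in $\mathbb{R}\cup\{+\infty\}$), the proof is complete.
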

\begin{corollary}\label{grrr}
For $t\in[0,1]$ the function $G:\mathbb{R}^2\times {LSC}([0,1])\rightarrow \mathbb{R}\cup \left\{\infty\right\}$ given by
\begin{align*}
G(x,y,\kappa)=\log \left[\sigma_{\kappa^-_{}}^{(1-t)}e^x+\sigma_{\kappa^+_{}}^{(t)}e^y\right]\ \mbox{ is convex.}
\end{align*}
\end{corollary}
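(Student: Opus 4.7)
The plan is to recognize $G$ as a log-sum-exp composed with two convex inner functions, and then invoke the standard fact that log-sum-exp is convex and coordinate-wise non-decreasing.

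First, I would rewrite
\begin{align*}
G(x,y,\kappa)=\log\!\left[\,e^{\,x+\log\sigma_{\kappa^-}^{\sscr{(1-t)}}}+e^{\,y+\log\sigma_{\kappa^+}^{\sscr{(t)}}}\,\right]
=F\bigl(A(x,\kappa),B(y,\kappa)\bigr),
\end{align*}
where $F(u,v)=\log(e^u+e^v)$, $A(x,\kappa)=x+\log\sigma_{\kappa^-}^{\sscr{(1-t)}}$ and $B(y,\kappa)=y+\log\sigma_{\kappa^+}^{\sscr{(t)}}$ (with the conventions $\log 0=-\infty$ and $\log\infty=+\infty$, so that in degenerate cases $G$ simply takes the value $+\infty$, which is compatible with convexity).

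Next, by Lemma \ref{gr}, the maps $\kappa\mapsto\log\sigma_{\kappa^-}^{\sscr{(1-t)}}$ and $\kappa\mapsto\log\sigma_{\kappa^+}^{\sscr{(t)}}$ are convex on $LSC([0,1])$. Since $x\mapsto x$ and $y\mapsto y$ are affine, it follows that $A$ is convex in $(x,\kappa)$ and $B$ is convex in $(y,\kappa)$, and hence each is a convex function of $(x,y,\kappa)$ as well.

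Then I would invoke the standard fact that $F(u,v)=\log(e^u+e^v)$ is convex on $\mathbb{R}^2$ (routine Hessian computation, or: it is the convex conjugate of the negative entropy on the simplex) and non-decreasing in each variable separately. Combining these properties with the convexity of $A$ and $B$ gives, for any $\lambda\in[0,1]$ and any two points $p_i=(x_i,y_i,\kappa_i)$, $i=0,1$,
\begin{align*}
G(\lambda p_1+(1-\lambda)p_0)
&=F\bigl(A(\lambda p_1+(1-\lambda)p_0),B(\lambda p_1+(1-\lambda)p_0)\bigr)\\
&\leq F\bigl(\lambda A(p_1)+(1-\lambda)A(p_0),\,\lambda B(p_1)+(1-\lambda)B(p_0)\bigr)\\
&\leq \lambda F(A(p_1),B(p_1))+(1-\lambda)F(A(p_0),B(p_0))\\
&=\lambda G(p_1)+(1-\lambda)G(p_0),
\end{align*}
where the first inequality uses monotonicity of $F$ together with convexity of $A,B$, and the second uses convexity of $F$. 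This proves the corollary.

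The only subtle point, and the one I would guard against, is the infinite case: if either $\sigma_{\kappa^-}^{\sscr{(1-t)}}$ or $\sigma_{\kappa^+}^{\sscr{(t)}}$ equals $+\infty$ on one of the points, then $A$ or $B$ takes the value $+\infty$ there. Since $F$ extends continuously and monotonically to $[-\infty,+\infty]^2$ (with $F(+\infty,\cdot)=+\infty$), the inequality chain still goes through, interpreted in $(-\infty,+\infty]$. No other obstacle arises, since the heavy lifting---log-convexity of the distortion coefficients in $\kappa$---was already done in Lemma \ref{gr}.
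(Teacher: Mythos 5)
Your proof is correct and is essentially the same argument as the paper's: both write $G(x,y,\kappa)=F\bigl(x+\log\sigma_{\kappa^-}^{\sscr{(1-t)}},\,y+\log\sigma_{\kappa^+}^{\sscr{(t)}}\bigr)$ with $F(u,v)=\log(e^u+e^v)$ and conclude from convexity and monotonicity of $F$ together with the convexity of $\kappa\mapsto\log\sigma_{\kappa^{\pm}}$ from Lemma \ref{gr}. Your explicit treatment of coordinate-wise monotonicity and of the $+\infty$ case is slightly more careful than the paper's one-line version, but it is the same route.
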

\begin{proof} We argue like in \cite[Lemma 2.11]{erbarkuwadasturm}.
Note that $$G(x,y,\kappa)=F(\log \sigma_{\kappa^-_{}}^{(1-t)}+x,\log \sigma_{\kappa^+_{}}^{(t)}+y)$$ with $F(u,v)=\log(e^u+e^v)$. Then 
the assertion follows since $F$ is convex, $x\mapsto F(u+x,v+x)$ is monotone increasing and $\kappa\mapsto \log \sigma_{\kappa_{}^{+/-}}$ is convex.
\end{proof}
\begin{remark}\label{ody}
If $\Pi\in\mathcal{P}(LSC([0,1])$, then $\kappa_{\Pi}:t\mapsto \int\kappa(t)d\Pi(\kappa)\in LSC([0,1])$ by Fatou's Lemma.
Hence, if $f:LSC([0,1])\rightarrow \mathbb{R}$ is convex, we obtain $$\int f(\kappa)d\Pi(\kappa)\leq f(\kappa_{\Pi}).$$
\end{remark}

For the rest of the article we use the following notation. Let $(X,\de_{\sX})$ be a metric space, and let $\kappa:X\rightarrow\mathbb{R}$ be lower semi-continuous.
We set $\kappa_{\gamma}=\kappa\circ\bar{\gamma}$ where $\gamma:[0,1]\rightarrow X$ is a constant speed geodesic in $X$ and $\bar{\gamma}$ its unit speed reparametrization.
We denote with $\gamma^-(t)=\gamma(1-t)$ the reverse parametrization of $\gamma$, and we also write $\gamma=\gamma^{\sscr{+}}$, and $\kappa^{\sscr{-/+}}_{\gamma}:=\kappa_{\gamma^{-/+}}$.
\begin{proposition}[\cite{ketterer5}]\label{central}
Let $\kappa:[a,b]\rightarrow \mathbb{R}$ be lower semi-continuous and $u:[a,b]\rightarrow \mathbb{R}_{\geq 0}$ be upper semi-continous. 
Then the following statements are equivalent:
\begin{itemize}
 \item[(i)]$u''+\kappa u\leq 0$ in the distributional sense, that is 
\begin{align}\label{distributional}
\int_a^b\varphi''(t)u(t)dt\leq -\int_a^b\varphi(t)\kappa(t)u(t)dt
\end{align}
for any $\varphi\in C_0^{\infty}\left((a,b)\right)$ with $\varphi\geq 0$.
\smallskip
 \item[(ii)] It holds 
\begin{align}
u(\gamma(t))\geq (1-t)u(\gamma(0))+tu(\gamma(1)+\int_0^1g(t,s)\kappa(\gamma(s))\theta^2 u(\gamma(s)) ds
\end{align}
for any constant speed geodesic $\gamma:[0,1]\rightarrow [a,b]$ where $\theta=|\dot{\gamma}|=\mbox{L}(\gamma)$ with
$g(s,t)$ beeing the Green function of $[0,1]$. 
\smallskip
 \item[(iii)] There is a constant $0<L\leq b-a$ such that
 \begin{align}\label{kuconcavity}
u(\gamma(t))\geq \sigma^{\sscr{(1-t)}}_{\kappa^{\sscr{-}}_{\gamma}}(\theta)u(\gamma(0))+\sigma^{\sscr{(t)}}_{\kappa^{\sscr{+}}_{\gamma}}(\theta)u(\gamma(1))
 \end{align}
for any constant speed geodesic $\gamma:[0,1]\rightarrow [a,b]$ with 
$\theta=|\dot{\gamma}|=\mbox{L}(\gamma)\leq L$.  We use the convention $\infty\cdot 0=0$.
\smallskip
 \item[(iv)] (\ref{kuconcavity}) holds for any constant speed geodesic $\gamma:[0,1]\rightarrow [a,b]$.
\end{itemize}
\end{proposition}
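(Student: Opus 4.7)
I would argue in the cycle (i) $\Leftrightarrow$ (ii), (ii) $\Rightarrow$ (iv), (iv) $\Rightarrow$ (iii), (iii) $\Rightarrow$ (i). Fix a constant-speed parametrization $\gamma : [0,1] \to [a,b]$ of speed $\theta$, set $v(t) := u(\gamma(t))$, and let $g(s,t) = \min(s,t) - st$ be the Dirichlet Green function of $-\partial^2$ on $[0,1]$. The equivalence (i) $\Leftrightarrow$ (ii) is Green function duality: subtracting the affine interpolant $(1-t) v(0) + t v(1)$ kills the boundary data and reduces to the zero-boundary case, where the inverse of $-\partial^2$ is given by $g$. Testing (i) against a mollification of $g(\cdot, t)$ yields (ii), while plugging an arbitrary nonnegative $\varphi \in C_0^\infty((0,1))$ into (ii) and integrating by parts twice recovers (i).

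For (ii) $\Rightarrow$ (iv), when $\sigma^{(t)}_{\kappa}(\theta) < \infty$ the lemma preceding Proposition \ref{monotonicity} tells us that the comparison function
\[
w(t) := \sigma^{(1-t)}_{\kappa^-_\gamma}(\theta)\, u(\gamma(0)) + \sigma^{(t)}_{\kappa^+_\gamma}(\theta)\, u(\gamma(1))
\]
satisfies the integral identity $w(t) = (1-t) u(\gamma(0)) + t\, u(\gamma(1)) + \int_0^1 g(t,s)\, \theta^2 \kappa(\gamma(s))\, w(s)\, ds$. Subtracting from (ii), the function $\psi := v - w$ vanishes at the endpoints and satisfies $\psi \geq T\psi$, where $T$ is the integral operator with kernel $g(t,s)\,\theta^2 \kappa(\gamma(s))$. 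The hypothesis $\sigma^{(t)}_{\kappa}(\theta) < \infty$ is exactly the condition that $u'' + \kappa u = 0$ has no conjugate point in $(0,\theta]$, which makes $(I - T)^{-1}$ well-defined and order-preserving on $C([0,1])$ by Krein--Rutman, hence $\psi \geq 0$. The borderline case $\sigma^{(t)}_{\kappa}(\theta) = \infty$ I would dispatch via the Sturm--Picone oscillation theorem: a strictly positive supersolution on the full interval would contradict the forced oscillation of $\frs_\kappa$, so $u$ must vanish at one endpoint, and with the convention $\infty \cdot 0 = 0$ the right-hand side of (iii) reduces to the finite case. The lower semi-continuous case follows by approximating $\kappa$ by the continuous sequence $\kappa_n$ from (\ref{uuu}) and passing to the limit via Dini's theorem and Lemma \ref{uff}, exactly as in the integral-identity lemma.

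The implication (iv) $\Rightarrow$ (iii) is trivial, and (iii) $\Rightarrow$ (i) follows by a shrinking-interval argument. For a given nonnegative $\varphi \in C_0^\infty((a,b))$ I would apply (iii) at $t = 1/2$ to the short geodesic of length $\theta$ centred at each $s \in \supp \varphi$, use the Taylor expansion
\[
\sigma^{(1/2)}_{\kappa^\pm_\gamma}(\theta) = \tfrac12 + \tfrac{\theta^2}{16}\, \kappa(\gamma(\tfrac12)) + o(\theta^2) \qquad (\theta \downarrow 0),
\]
which follows for continuous $\kappa$ from one iteration of the integral identity for $\sigma$, multiply by $\varphi(s)$, integrate in $s$, divide by $\theta^2$, and let $\theta \to 0$ to recover the distributional inequality. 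The lower semi-continuous case is handled by the same monotone approximation $\kappa_n \uparrow \kappa$ together with Fatou's lemma on the $\kappa u$ term.

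The main obstacle I anticipate is the degenerate regime in (ii) $\Rightarrow$ (iv): the Krein--Rutman comparison is clean only when the distortion coefficient is strictly finite, so reconciling it with both the blow-up case $\sigma^{(t)}_{\kappa}(\theta) = \infty$ and the mere lower semi-continuity of $\kappa$ requires invoking Sturm--Picone together with the monotone approximation in tandem, while keeping the Green-function integrands under control as in the integral-identity lemma.
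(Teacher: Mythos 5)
Since the paper only quotes this proposition from \cite{ketterer5} and contains no proof of it, I can only assess your argument on its own terms. The cycle you choose is reasonable, and the (i) $\Leftrightarrow$ (ii) Green-duality step and the (iii) $\Rightarrow$ (i) second-difference/Taylor step are fine in outline (the expansion $\sigma^{\sscr{(1/2)}}_{\kappa}(\theta)=\tfrac12+\tfrac{1}{16}\kappa\theta^2+o(\theta^2)$ is consistent with (\ref{taylor}), and the localization-plus-monotone-approximation scheme is exactly the technique the paper itself uses in Theorem \ref{strong}). The genuine gap is in (ii) $\Rightarrow$ (iv). The kernel $g(t,s)\theta^2\kappa(\gamma(s))$ is \emph{signed} as soon as $\kappa$ takes negative values, so Krein--Rutman does not apply to $T$, and in fact $(I-T)^{-1}$ is \emph{not} order-preserving even in the disconjugate regime: take $\theta^2\kappa\equiv-m^2$, let $\eta\geq0$ be a smooth bump compactly supported in $(0,1)$, and let $\varphi$ solve $-\varphi''+m^2\varphi=\eta$ with Dirichlet conditions; then $\psi:=\eta-m^2\varphi$ satisfies $\psi-T\psi=\eta\geq0$ and $\psi(0)=\psi(1)=0$, yet $\psi=-m^2\varphi<0$ off the support of $\eta$. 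Consequently your per-geodesic deduction fails: with $w$ the comparison solution, $v:=w+\psi$ satisfies the single Green-function inequality of (ii) along the full geodesic but lies strictly below $w$ somewhere, so (iv) for a given $\gamma$ simply cannot be extracted from (ii) for that same $\gamma$ alone.

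The repair is to use the full strength of the hypothesis, i.e.\ (ii) on all sub-geodesics — equivalently (i), which you have already established. Then $\psi=v-w$ satisfies $-\psi''-\theta^2\kappa_{\gamma}\psi\geq0$ as a nonnegative measure on $(0,1)$ with $\psi(0)=\psi(1)=0$, and the relevant positivity statement is the classical one for the \emph{full} operator $-\frac{d^2}{dt^2}-\theta^2\kappa_{\gamma}$: finiteness of the distortion coefficient means $\frs_{\kappa}>0$ on $(0,\theta]$, which by Sturm separation gives disconjugacy on $[0,\theta]$, hence the Dirichlet Green function of this operator exists and is nonnegative; representing $\psi$ through it yields $\psi\geq0$. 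This is a maximum principle for the differential operator, not a positivity property of $(I-T)^{-1}$, and it is the step your write-up is missing. With that substitution, your treatment of the degenerate case $\sigma^{\sscr{(t)}}_{\kappa}(\theta)=\infty$ via Sturm--Picone (a positive supersolution forces disconjugacy, so a conjugate point forces an endpoint value of $u$ to vanish, making the convention $\infty\cdot0=0$ operative) and the monotone approximation $\kappa_n\uparrow\kappa$ for merely lower semi-continuous $\kappa$ are the right ideas, though both are only sketched.
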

\begin{definition}\label{ahr}
Consider a metric space $(Y,\de_{\sY})$ and a continuous function $\kappa:Y\rightarrow \mathbb{R}$. 
We say a function $u:Y\rightarrow [0,\infty)$ is \textit{$\kappa u$-concave} if 
for each geodesic $\gamma:[0,1]\rightarrow D(u)=\left\{u>0\right\}$
\begin{align}\label{grugru}
u(\gamma(t))\geq \sigma^{\sscr{(1-t)}}_{\kappa^{\sscr{-}}_{\gamma}}(\mbox{L}(\gamma))u(\gamma(0))+\sigma^{\sscr{(t)}}_{\kappa^{\sscr{+}}_{\gamma}}(\mbox{L}(\gamma))u(\gamma(1))
\end{align}
where $\kappa_{\gamma}=\kappa\circ\bar{\gamma}:[0,\mbox{L}(\gamma)]\rightarrow Y$, $\bar{\gamma}$ is the unit speed reparametrization of $\gamma$ and $D(u)$ is equipped with the induced metric. 
\smallskip\\
We say $u:Y\rightarrow [0,\infty)$ is \textit{weakly $\kappa u$-concave} if for all $x,y\in D(u)$ there exists a geodesic $\gamma:[0,1]\rightarrow Y$ between $x$ and $y$ such that (\ref{grugru}) holds.
To distinguish $\kappa u$-concavity from weak $\kappa u$-concavity we will
also say \textit{strong $\kappa u$-convexity}.
\smallskip\\
Consider a function $S:Y\rightarrow \mathbb{R}\cup\left\{\infty\right\}$. We say $S$ is (weakly) $(\kappa,N)$-convex 
if $e^{-\frac{S}{N}}=:U_{\sN}: Y\rightarrow [0,\infty)$ is (weakly) $\frac{\kappa}{N} U_{\sN}$-concave.
We use the convention $e^{-\infty}=0$. Again, we will also say that $S$ is \textit{strongly $(\kappa,N)$-convex} instead of $S$ is $(\kappa,N)$-convex.
\end{definition}
\begin{lemma}\label{sums} Let $Y$ be a metric space as in the previous definition.
\begin{itemize}
 \item[(i)] If $S$ is (weakly) $(\kappa,N)$-convex, then $\lambda\cdot f$ is (weakly) $(\lambda\kappa,\lambda N)$-convex for $\lambda>0$.
 \smallskip
 \item[(ii)] If $S_1$ is weakly $(\kappa_1,N_1)$-convex and $S_2$ is strongly $(\kappa_2,N_2)$-convex, then $S_1+S_2:D(S_1)\cap D(S_2)\rightarrow [-\infty,\infty)$ is weakly $(\kappa_1+\kappa_2,N_1+N_2)$-convex.
 \smallskip
 \item[(iii)] If $S$ is (weakly) $(\kappa, N)$-convex and $\kappa'\leq \kappa$ and $N'\geq N$ then $S$ is (weakly) $(\kappa',N')$-convex. 
\end{itemize}
\begin{proof}
Apply Proposition \ref{monotonicity} and Lemma \ref{gr}. 
\end{proof}

\end{lemma}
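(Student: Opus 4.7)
The plan is to treat (i) as an immediate identification of definitions, (ii) as the substantive step combining the component concavities via a H\"older-type arithmetic and Lemma \ref{gr}, and (iii) as a two-stage reduction in which the $\kappa$-part is pure monotonicity (Proposition \ref{monotonicity}) and the $N$-part is an instance of (ii) with the trivial summand $S_2\equiv 0$. Item (i) is just the algebraic observation that
\[
e^{-(\lambda S)/(\lambda N)} = e^{-S/N} \quad\text{and}\quad \frac{\lambda \kappa}{\lambda N} = \frac{\kappa}{N},
\]
so the $\kappa u$-concavity inequality (\ref{grugru}) for $\lambda S$ with parameter pair $(\lambda \kappa,\lambda N)$ is literally the one for $S$ with $(\kappa,N)$, and both the weak and the strong versions transfer.

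For (ii), given $x,y \in D(S_1)\cap D(S_2)$, I would use the weak $(\kappa_1,N_1)$-convexity of $S_1$ to select a geodesic $\gamma$ from $x$ to $y$ along which $U^{(1)}_{N_1}:=e^{-S_1/N_1}$ satisfies (\ref{grugru}) with parameter $\kappa_1/N_1$, and invoke the strong $(\kappa_2,N_2)$-convexity of $S_2$ to get the analogous inequality for $U^{(2)}_{N_2}:=e^{-S_2/N_2}$ along the same $\gamma$. With $N=N_1+N_2$ and $\lambda_i=N_i/N$, one has the factorization $U_N := e^{-(S_1+S_2)/N} = (U^{(1)}_{N_1})^{\lambda_1}(U^{(2)}_{N_2})^{\lambda_2}$ together with the weight identity $(\kappa_1+\kappa_2)/N = \lambda_1(\kappa_1/N_1)+\lambda_2(\kappa_2/N_2)$. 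Raising the two component inequalities to the respective powers and multiplying, I would apply the weighted AM--GM consequence
\[
(a+b)^{\lambda_1}(c+d)^{\lambda_2}\ge a^{\lambda_1}c^{\lambda_2}+b^{\lambda_1}d^{\lambda_2},\qquad a,b,c,d\ge 0,
\]
to separate the resulting RHS into two terms, and then apply Lemma \ref{gr} with weights $\lambda_1,\lambda_2$ to bound each product of distortion coefficients from below by the corresponding $\sigma$ for the averaged curvature $(\kappa_1+\kappa_2)/N$. This yields exactly the inequality witnessing weak $(\kappa_1+\kappa_2,N_1+N_2)$-convexity of $S_1+S_2$ along $\gamma$.

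For (iii), I would split the statement into its two halves. For the $\kappa$-comparison $\kappa'\le\kappa$ at fixed $N$, Proposition \ref{monotonicity} applied to $\kappa'/N\le\kappa/N$ gives $\sigma^{\sscr{(t)}}_{(\kappa'/N)^{\pm}_\gamma}(L) \le \sigma^{\sscr{(t)}}_{(\kappa/N)^{\pm}_\gamma}(L)$, so the hypothesized inequality for $S$ dominates the one sought with $\kappa'$. For the dimensional increase $N\le N'$, I would apply (ii) with the auxiliary summand $S_2 \equiv 0$: the constant function $1$ satisfies (\ref{grugru}) with $\kappa_2\equiv 0$ (with equality, since $\sigma^{\sscr{(1-t)}}_0+\sigma^{\sscr{(t)}}_0=(1-t)+t=1$), so $0$ is strongly $(0,N'-N)$-convex, and (ii) produces $S=S+0$ as $(\kappa,N')$-convex; since every geodesic is admissible for the zero summand, the same argument preserves the strong variant when $S$ itself is strongly convex. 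The only real obstacle is getting the H\"older-style combination in (ii) to line up correctly, i.e.\ verifying that both the factorization of $U_N$ and the linear decomposition of $(\kappa_1+\kappa_2)/N$ carry the same weights $\lambda_i=N_i/N$ that Lemma \ref{gr} requires; once this bookkeeping is in place both (ii) and (iii) follow by direct substitution into the stated preliminaries.
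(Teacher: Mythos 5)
Your proposal is correct and follows essentially the same route as the paper: the paper's one-line proof ("apply Proposition \ref{monotonicity} and Lemma \ref{gr}") is exactly your argument spelled out, with Proposition \ref{monotonicity} handling the comparison $\kappa'\leq\kappa$ and Lemma \ref{gr} (log-convexity of $\kappa\mapsto\sigma_{\kappa}$), combined with the H\"older/weighted AM--GM step and the weights $\lambda_i=N_i/N$, handling the additivity in (ii) and the dimensional increase in (iii). Your reduction of the $N'\geq N$ case to (ii) via the trivially $(0,N'-N)$-convex summand $S_2\equiv 0$ is a clean way to organize the same computation.
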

\begin{corollary}
If $S:Y\rightarrow (-\infty,\infty]$ is a (weakly) $(\kappa,N)$-convex function then $S$ is (weakly) $\kappa$-convex in the sense that for each geodesic $\gamma$ in $D(S)$
(for each pair $x_0,x_1\in D(S)$ there exists 
a geodesic $\gamma:[0,1]\rightarrow Y$ with)
\begin{align*}
S(\gamma_t)\leq (1-t)S(x_0)+tS(x_1)- \int_0^1g(s,t)\de_{\sY}(x_0,x_1)^2\kappa\circ{\gamma}(s)dt
\end{align*} for all $t\in [0,1]$.
\end{corollary}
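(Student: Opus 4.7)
The plan is to derive the conclusion as the $N' \to \infty$ limit of the defining $(\kappa, N')$-convexity inequality. By Lemma~\ref{sums}(iii), the hypothesis of $(\kappa, N)$-convexity upgrades to $(\kappa, N')$-convexity for every $N' \geq N$. In the strong case every geodesic then witnesses the inequality uniformly in $N'$; in the weak case one selects a witnessing geodesic $\gamma^{N'}$ for each $N'$ and, assuming enough compactness to extract a convergent subsequence, works with a limiting geodesic $\gamma$.

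Writing $\theta := \de_{\sY}(x_0, x_1)$ and taking $\gamma$ to be the relevant witnessing geodesic, the $(\kappa, N')$-convexity inequality reads
\begin{align*}
e^{-S(\gamma_t)/N'} \geq \sigma^{(1-t)}_{\kappa^-_\gamma/N'}(\theta)\, e^{-S(x_0)/N'} + \sigma^{(t)}_{\kappa^+_\gamma/N'}(\theta)\, e^{-S(x_1)/N'}.
\end{align*}
Multiplying both sides by $N'$, subtracting $N'$, and letting $N' \to \infty$, the left side tends to $-S(\gamma_t)$ by Taylor expansion and $N'(e^{-S(x_i)/N'} - 1) \to -S(x_i)$. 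Using the integral identity from the lemma following Remark~\ref{someotherremark},
\begin{align*}
\sigma^{(r)}_{\kappa/N'}(\theta) - r = \frac{1}{N'} \int_0^1 g(s, r)\, \theta^2\, \kappa(\gamma(s))\, \sigma^{(s)}_{\kappa/N'}(\theta)\, ds,
\end{align*}
together with $\sigma^{(s)}_{\kappa/N'}(\theta) \to s$ (via Lemma~\ref{uff} applied to $\kappa/N' \to 0$) and dominated convergence, one obtains $N'[\sigma^{(r)}_{\kappa/N'}(\theta) - r] \to \theta^2 \int_0^1 g(s, r)\, s\, \kappa(\gamma(s))\, ds$. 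The substitution $u = 1 - s$ combined with the Green-function symmetry $g(1-u, 1-t) = g(u, t)$ then merges the $\kappa^-_\gamma$ contribution at parameter $1-t$ with the $\kappa^+_\gamma$ contribution at parameter $t$ into the single integral $\theta^2 \int_0^1 g(s, t)\, \kappa(\gamma(s))\, ds$. The resulting limit
\begin{align*}
-S(\gamma_t) \geq -(1-t)\, S(x_0) - t\, S(x_1) + \theta^2 \int_0^1 g(s, t)\, \kappa(\gamma(s))\, ds
\end{align*}
rearranges to the desired inequality.

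The main obstacle lies in the weak case, where the witnessing geodesic may \emph{a priori} depend on $N'$; extracting a limit geodesic from the family $\{\gamma^{N'}\}$ requires compactness of the space of geodesics between fixed endpoints, which is available under local compactness of $Y$. A secondary technical point is the rigorous justification of the asymptotic expansion for the distortion coefficients: one needs that $\sigma^{(s)}_{\kappa/N'}(\theta)$ be finite for all large $N'$ and that the integrand in the displayed identity be uniformly dominated in $s$, both of which follow from lower semi-continuity of $\kappa$ on the compact image of $\gamma$ (yielding a lower bound on $\kappa_\gamma$) together with $\kappa/N' \to 0$ pointwise (which keeps $\frs_{\kappa/N'}$ uniformly positive on $[0, \theta]$ for $N'$ large).
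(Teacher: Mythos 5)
Your main computation is correct, and in substance it is the paper's argument: the paper also proves the corollary by fixing a single geodesic, using that the $(\kappa,N')$-inequality holds along it for all $N'\geq N$, multiplying by $N'$ and letting $N'\to\infty$. The only difference is the technical vehicle: the paper passes through the distributional inequality of Proposition \ref{central}(i) for $u=e^{-S\circ\gamma/N'}$ with $\kappa/N'$ and identifies the limiting distributional inequality for $S$, whereas you stay with the coefficient inequality and compute the limit via the Green-function identity, obtaining
\begin{align*}
N'\bigl[\sigma^{\sscr{(t)}}_{\kappa^+_{\gamma}/N'}(\theta)-t\bigr]+N'\bigl[\sigma^{\sscr{(1-t)}}_{\kappa^-_{\gamma}/N'}(\theta)-(1-t)\bigr]\longrightarrow \theta^2\int_0^1 g(s,t)\,\kappa(\gamma(s))\,ds,
\end{align*}
where the weights $s$ and $1-s$ merge through $g(1-s,1-t)=g(s,t)$; this is right, and your remarks on finiteness and domination are no less careful than the paper's one-line treatment.

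The weak case, however, is not handled correctly as written. You let the witnessing geodesic depend on $N'$ and extract a limit geodesic, which (a) invokes local compactness of $Y$, an assumption that appears nowhere in Definition \ref{ahr} or in the corollary, and (b) even granting compactness, passing the inequality to the limit geodesic needs $S(\gamma_t)\leq\liminf_{N'}S(\gamma^{N'}_t)$, i.e.\ lower semicontinuity of $S$, which is also not assumed (lower semicontinuity of $\kappa$ only controls the coefficient terms, as in Lemma \ref{uff}, not the value of $S$ at the moving midpoint). The detour is unnecessary: the monotonicity $(\kappa,N)\Rightarrow(\kappa,N')$ of Lemma \ref{sums}(iii) comes from pointwise inequalities for the distortion coefficients (Proposition \ref{monotonicity} and Lemma \ref{gr}), so it holds along each fixed geodesic. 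Hence the one geodesic furnished by the $(\kappa,N)$-hypothesis for the pair $x_0,x_1$ already satisfies the $(\kappa,N')$-inequality for every $N'\geq N$, and your limit argument should simply be run along that geodesic --- which is what the paper implicitly does. With that correction your proof is complete.
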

\begin{proof}
Let us assume that $S$ is $(\kappa,N)$-convex. 
Consider a geodesic $\gamma$. Then (\ref{distributional}) holds for $u(s)=e^{-\frac{1}{N}S\circ \gamma(s)}$ for $\kappa/N$ instead of $\kappa$. Multiply with $N$ and let $N\rightarrow \infty$. 
We obtain a distributional inequality for $S$ along $\gamma$ that characterizes $\kappa$-convexity.
\end{proof}

\begin{remark}
Let $(M,g_{\sM})$ be a Riemannian manifold. A smooth function $u:M\rightarrow [0,\infty)$ is (weakly) $\kappa u$-convex if 
$
\nabla^2 u\geq \kappa u g_{\sM}
$
and a smooth function $f:M\rightarrow \mathbb{R}$ is $(\kappa,N)$-convex if 
$
\nabla^2 f\geq \kappa g_{\sM}+\frac{1}{N}\left(df\otimes df\right).
$
\end{remark}

\noindent 
For a function $f:[a,b]\rightarrow \mathbb{R}$ the right and left derivatives are denoted by 
\begin{align*}
\frac{d^+}{dt}f(t)=\limsup_{h\downarrow 0}\frac{f(t+h)-f(t)}{h}\ \ \&\ \ 
\frac{d^-}{dt}f(t)=\liminf_{h\uparrow 0}\frac{f(t+h)-f(t)}{h}.
\end{align*}
We can consider the derivative of $\sigma^{\sscr{(t)/(1-t)}}_{\kappa}(\theta)$ in the following sense:
\begin{align*}
\begin{cases}
\frac{d^+}{dt}\Big|_{t=0}\sigma_{\kappa_{}}^{\sscr{(t)/(1-t)}}(\theta)\in \mathbb{R}&\ \mbox{ if }\sigma^{\sscr{(t)/(1-t)}}_{\kappa}(\theta)<\infty\\
\infty & \ \mbox{ otherwise}.
\end{cases}
\end{align*}
\begin{lemma}\label{derivatives} Consider $\kappa\in LSC([0,L])$ and $\theta\in[0,L]$ such that $\sigma^{\sscr{(t)/(1-t)}}_{\kappa}(\theta)<\infty$. Then
\begin{align}\label{advancedmono}
\frac{d^+}{dt}\Big|_{t=0}\sigma_{\kappa_{}}^{\sscr{(t)}}(\theta)\uparrow \ \mbox{ and } \ 
\frac{d^-}{dt}\Big|_{t=1}\sigma_{\kappa_{}}^{\sscr{(t)}}(\theta)=-\frac{d^+}{dt}\Big|_{t=0}\sigma_{\kappa_{}}^{\sscr{(1-t)}}(\theta)\downarrow 
\end{align}
if $\kappa$ is non-decreasing.
\end{lemma}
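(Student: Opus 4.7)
The plan is to reduce to continuous $\kappa$ via the approximation $\kappa_n\uparrow\kappa$ from \eqref{uuu}, then use the closed form $\sigma^{\sscr{(t)}}_\kappa(\theta)=\frs_\kappa(t\theta)/\frs_\kappa(\theta)$ and differentiate in $\theta$ with the aid of the defining ODE $\frs_\kappa''+\kappa\,\frs_\kappa=0$. First I would record the endpoint slopes: for continuous $\kappa$, $t\mapsto\sigma^{\sscr{(t)}}_\kappa(\theta)$ is smooth, so
\[
\frac{d^+}{dt}\Big|_{t=0}\sigma^{\sscr{(t)}}_\kappa(\theta) = \frac{\theta}{\frs_\kappa(\theta)},\qquad \frac{d^-}{dt}\Big|_{t=1}\sigma^{\sscr{(t)}}_\kappa(\theta) = \frac{\theta\,\frc_\kappa(\theta)}{\frs_\kappa(\theta)},
\]
and the stated equality with the reverse parametrization follows immediately from $\sigma^{\sscr{(1-t)}}_\kappa(\theta) = \frs_\kappa((1-t)\theta)/\frs_\kappa(\theta)$ and the chain rule.

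For the first monotonicity statement I set $p(\theta):=\frs_\kappa(\theta)-\theta\,\frc_\kappa(\theta)$, so that $\frac{d}{d\theta}[\theta/\frs_\kappa(\theta)] = p(\theta)/\frs_\kappa(\theta)^2$. Since $p(0)=0$, and the identities $\frc_\kappa=\frs_\kappa'$ and $\frs_\kappa''=-\kappa\,\frs_\kappa$ give $p'(\theta)=\theta\,\kappa(\theta)\,\frs_\kappa(\theta)$, the sign of $p$ is controlled by that of $\kappa\,\frs_\kappa$, and on the range where the non-decreasing $\kappa$ is non-negative this yields the $\uparrow$ conclusion. For the second, with $\psi(\theta):=\theta\,\frc_\kappa(\theta)/\frs_\kappa(\theta)$ one verifies $\frs_\kappa^2\,\psi' = g$ where $g(0)=0$ and
\[
g'(\theta) = -\,\frs_\kappa(\theta)^2\,\bigl(2\kappa(\theta)+\theta\,\kappa'(\theta)\bigr),
\]
which is non-positive once $\kappa\ge 0$ and $\kappa$ is non-decreasing; hence $g\le 0$ and $\psi$ is non-increasing, giving the $\downarrow$.

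Finally I pass from continuous to lower semi-continuous $\kappa$ via the monotone approximants $\kappa_n\uparrow\kappa$ of \eqref{uuu}, which preserve non-decreasingness of $\kappa$ as a function of its argument. Remark \ref{someotherremark} together with Dini's theorem gives uniform convergence of $\frs_{\kappa_n}$ to $\frs_\kappa$ on $[0,\theta]$, so both the endpoint slopes and their monotonicity in $\theta$ pass to the limit. The main obstacle is making the $\theta\,\kappa'$ term rigorous when $\kappa$ is merely lsc: the clean way is to observe $2\kappa(\theta)+\theta\,\kappa'(\theta)=\theta^{-1}(\theta^2\,\kappa(\theta))'$, so that the required sign statement for $g'$ is really a monotonicity statement about $\theta\mapsto\theta^2\kappa(\theta)$ that is already manifest for the continuous approximants $\kappa_n$ before taking limits.
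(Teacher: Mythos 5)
Your proposal proves a different statement from the one the lemma asserts and is later used for. The arrows in Lemma \ref{derivatives} refer to monotone dependence on $\kappa$ at fixed $\theta$: if $\kappa\ge\kappa'$ pointwise, then $\frac{d^+}{dt}\big|_{t=0}\sigma^{\sscr{(t)}}_{\kappa}(\theta)\ge \frac{d^+}{dt}\big|_{t=0}\sigma^{\sscr{(t)}}_{\kappa'}(\theta)$ and $\frac{d^-}{dt}\big|_{t=1}\sigma^{\sscr{(t)}}_{\kappa}(\theta)\le \frac{d^-}{dt}\big|_{t=1}\sigma^{\sscr{(t)}}_{\kappa'}(\theta)$. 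This is exactly how the lemma is invoked later: in Lemma \ref{summerday} the bound $\kappa_{\gamma}\ge K$ is turned into a comparison with the constant-$K$ coefficients at the same $\theta$, and the same reading underlies Lemma \ref{monot} and Theorem \ref{B}. The paper's own proof is correspondingly short: by Proposition \ref{monotonicity}, $\sigma^{\sscr{(t)}}_{\kappa}(\theta)\ge\sigma^{\sscr{(t)}}_{\kappa'}(\theta)$ for all $t$, while both curves take the same endpoint values ($0$ at $t=0$ and $1$ at $t=1$); hence the one-sided difference quotients at $t=0$, respectively at $t=1$, compare in the stated directions, and the identity $\frac{d^-}{dt}\big|_{t=1}\sigma^{\sscr{(t)}}_{\kappa}(\theta)=-\frac{d^+}{dt}\big|_{t=0}\sigma^{\sscr{(1-t)}}_{\kappa}(\theta)$ is just the reparametrization $t\mapsto 1-t$. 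No differentiation in $\theta$, no closed-form slopes, and no regularization $\kappa_n\uparrow\kappa$ are needed.

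What you establish instead is monotonicity in $\theta$ of the endpoint slopes $\theta/\frs_{\kappa}(\theta)$ and $\theta\,\frc_{\kappa}(\theta)/\frs_{\kappa}(\theta)$, and for that you must additionally assume $\kappa\ge 0$ (your phrase ``on the range where the non-decreasing $\kappa$ is non-negative''). That hypothesis is not in the lemma, and without it your claimed direction fails: for constant $\kappa=-1$ one has $\theta/\frs_{\kappa}(\theta)=\theta/\sinh\theta$, which is decreasing in $\theta$. Your computations of the endpoint slopes and of $p'$ and $g'$ are correct as far as they go, but they do not yield the comparison between two different curvature functions $\kappa\ge\kappa'$ at a fixed $\theta$ that the remainder of the paper relies on. So the proposal has a genuine gap: it answers a different question; the intended argument is the elementary endpoint-pinching consequence of Proposition \ref{monotonicity} described above.
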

\begin{proof}
The distortion coefficients $\sigma_{\kappa_{}}^{\sscr{(t)}}(|\dot{\gamma}|)$ are monotone increasing with respect to $\kappa$ and satisfy 
$\sigma_{\kappa_{\gamma}}^{\sscr{(0)}}(|\dot{\gamma}|)=0$ and $\sigma_{\kappa_{\gamma}}^{\sscr{(1)}}(|\dot{\gamma}|)=1$.
\end{proof}
\begin{remark}
If $\sigma^{\sscr{(t)}}_{\kappa}(\theta)<\infty$,
$
\frac{d^+}{dt}|_{t=0}\sigma_{\kappa_{}}^{\sscr{(t)}}(\theta)$ has the following respresentation.
\begin{align*}
\frac{d^+}{dt}\Big|_{t=0}\sigma_{\kappa_{}}^{\sscr{(t)}}(\theta)=\int_0^1(1-s)\theta^2\kappa\circ\gamma(s)\sigma_{\kappa_{}}^{\sscr{(s)}}(\theta)ds+1.
\end{align*}
This follows from 
\begin{align*}
\sigma_{\kappa_{}}^{\sscr{(t)}}(\theta)-t=\int_0^1g(s,t)\theta^2\kappa\circ\gamma(s)\sigma_{\kappa_{}}^{\sscr{(s)}}(\theta)ds
\end{align*}
and $\frac{d}{dt}|_{t=0}g(s,t)=(1-s)$. Similar for $\frac{d^-}{dt}|_{t=1}\sigma_{\kappa_{}}^{\sscr{(t)}}(\theta)$
\end{remark}

\begin{lemma}\label{uniform}
Let $\kappa_n,\kappa:[0,\theta]\rightarrow\mathbb{R}$ be lower semi-continuous such that $$\liminf_{n\rightarrow\infty}\kappa_n(x)\geq \kappa(x).$$ 
Assume that $\frac{d^+}{dt}|_{t=0}\sigma_{\kappa_{n}}^{\sscr{(t)}}(\theta), 
\frac{d^+}{dt}|_{t=0}\sigma_{\kappa_{}}^{\sscr{(t)}}(\theta)<\infty$. Then
\begin{align}
\liminf_{n\rightarrow\infty}\frac{d^+}{dt}\Big|_{t=0}\sigma_{\kappa_{n}}^{\sscr{(t)}}(\theta)\geq 
\frac{d^+}{dt}\Big|_{t=0}\sigma_{\kappa_{}}^{\sscr{(t)}}(\theta).
\end{align}
\end{lemma}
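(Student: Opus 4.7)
The plan is to exploit the integral representation from the Remark preceding the lemma, which gives
\begin{align*}
\frac{d^+}{dt}\Big|_{t=0}\sigma_{\kappa_n}^{\sscr{(t)}}(\theta)=1+\int_0^1 (1-s)\theta^2\kappa_n(s\theta)\sigma_{\kappa_n}^{\sscr{(s)}}(\theta)\,ds,
\end{align*}
and analogously for $\kappa$. The task is thus reduced to the Fatou-type inequality
\begin{align*}
\liminf_{n\to\infty}\int_0^1 (1-s)\theta^2\kappa_n(s\theta)\sigma_{\kappa_n}^{\sscr{(s)}}(\theta)\,ds\geq\int_0^1(1-s)\theta^2\kappa(s\theta)\sigma_{\kappa}^{\sscr{(s)}}(\theta)\,ds.
\end{align*}

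The pointwise behaviour of the integrands is controlled by Lemma \ref{uff}, which yields $\liminf_n\sigma_{\kappa_n}^{\sscr{(s)}}(\theta)\geq\sigma_{\kappa}^{\sscr{(s)}}(\theta)\geq 0$ for every $s\in [0,1]$. Combined with $\liminf_n\kappa_n(s\theta)\geq\kappa(s\theta)$ and the liminf-of-product inequality for non-negative factors, this produces the desired pointwise lower bound on the integrand when both factors are non-negative. To accommodate signed $\kappa$ I would shift: since $\kappa$ is lower semi-continuous on the compact interval $[0,\theta]$, there exists $C\geq 0$ with $\kappa+C\geq 1$, and, arguing as in the proof of Lemma \ref{uff}, for every $\epsilon>0$ the bound $\kappa_n+C\geq 1-\epsilon>0$ holds uniformly in $s\in[0,\theta]$ for all sufficiently large $n$. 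Classical Fatou then applies to the non-negative integrand $(1-s)\theta^2(\kappa_n(s\theta)+C)\sigma_{\kappa_n}^{\sscr{(s)}}(\theta)$ and delivers the shifted liminf inequality with $\kappa+C$ in place of $\kappa$.

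To recover the unshifted statement one must subtract off $C\int_0^1(1-s)\theta^2\sigma_{\kappa_n}^{\sscr{(s)}}(\theta)\,ds$, which demands the reverse inequality
\begin{align*}
\limsup_{n\to\infty}\int_0^1(1-s)\theta^2\sigma_{\kappa_n}^{\sscr{(s)}}(\theta)\,ds\leq\int_0^1(1-s)\theta^2\sigma_\kappa^{\sscr{(s)}}(\theta)\,ds,
\end{align*}
i.e.\ dominated convergence underwritten by a uniform $L^\infty$ bound on $\sigma_{\kappa_n}^{\sscr{(s)}}(\theta)$ over $[0,1]$. Securing this uniform control is the main obstacle. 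My strategy is to first reduce to the case $\liminf_n\frac{d^+}{dt}|_{t=0}\sigma_{\kappa_n}^{\sscr{(t)}}(\theta)<\infty$ (the lemma being trivial otherwise), then pass to a subsequence along which this quantity is bounded by some $M$, and finally invoke the integral representation together with the eventual uniform lower bound $\kappa_n\geq -C'$ to derive, via a Gronwall-type comparison applied to $\sigma_{\kappa_n}$, a uniform upper bound $\sigma_{\kappa_n}^{\sscr{(s)}}(\theta)\leq K$ on $[0,1]$. Dominated convergence then handles the shift term and closes the chain of inequalities.
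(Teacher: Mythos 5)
There is a genuine gap, and it sits exactly where you flagged the ``main obstacle''. Your decomposition $\kappa_n\sigma_{\kappa_n}=(\kappa_n+C)\sigma_{\kappa_n}-C\sigma_{\kappa_n}$ forces you to prove the reverse inequality $\limsup_{n}\int_0^1(1-s)\theta^2\sigma_{\kappa_n}^{\sscr{(s)}}(\theta)\,ds\leq\int_0^1(1-s)\theta^2\sigma_{\kappa}^{\sscr{(s)}}(\theta)\,ds$, but this inequality is false under the hypotheses of the lemma. The assumption is only one-sided: $\liminf_n\kappa_n\geq\kappa$ allows $\kappa_n$ to lie strictly above $\kappa$, e.g.\ $\kappa_n\equiv\kappa+1$ for all $n$, in which case Proposition \ref{monotonicity} gives $\sigma_{\kappa_n}^{\sscr{(s)}}(\theta)=\sigma_{\kappa+1}^{\sscr{(s)}}(\theta)\geq\sigma_{\kappa}^{\sscr{(s)}}(\theta)$ with strict inequality in general (already for $\kappa\equiv 0$, $\theta=1$ one has $\sin(s)/\sin(1)>s$), so the limsup bound fails. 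Dominated convergence cannot rescue this: it needs pointwise convergence of $\sigma_{\kappa_n}^{\sscr{(s)}}(\theta)$ to $\sigma_{\kappa}^{\sscr{(s)}}(\theta)$, while Lemma \ref{uff} only supplies a liminf lower bound; a uniform $L^\infty$ bound on $\sigma_{\kappa_n}^{\sscr{(s)}}(\theta)$ (which your Gronwall argument can indeed produce from the bounded derivatives and $\kappa_n\geq -C'$) only yields $\limsup_n\int(1-s)\theta^2\sigma_{\kappa_n}^{\sscr{(s)}}(\theta)\,ds\leq$ a fixed constant, not the quantity for $\kappa$ that you need to absorb the shift. So the chain of inequalities cannot close; the constant-shift-plus-Fatou route loses a non-vanishing error term whenever $C>0$.

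The paper avoids this by never comparing $\sigma_{\kappa_n}$ directly with $\sigma_{\kappa}$ through an integral identity, but by the same two-step scheme as in Lemma \ref{uff}: for $\epsilon>0$ one has eventually $\kappa_n\geq\kappa-\epsilon$, hence by monotonicity of the slopes (Lemma \ref{derivatives}) $\frac{d^+}{dt}\big|_{t=0}\sigma_{\kappa_n}^{\sscr{(t)}}(\theta)\geq\frac{d^+}{dt}\big|_{t=0}\sigma_{\kappa-\epsilon}^{\sscr{(t)}}(\theta)$; then one shows that the slope is stable under \emph{uniform} perturbations $\kappa\mapsto\kappa-\epsilon$, which is where the integral representation from the Remark enters (first for continuous $\kappa$, then for lower semi-continuous $\kappa$ via the monotone approximation $\kappa_m\uparrow\kappa$). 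Monotonicity replaces the upper-semicontinuity-along-the-sequence that your argument would require, and the only limit that has to be controlled is in the single uniform parameter $\epsilon$, where the representation formula does give continuity. If you want to keep your representation-formula framework, you should apply it to $\kappa-\epsilon$ versus $\kappa$ (a uniform comparison), not to $\kappa_n$ versus $\kappa$.
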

\begin{proof}
If $\kappa$ is continuous, then uniform changes of $\kappa$ imply that $\frac{d}{dt}|_{t=0,1}\sigma_{\kappa_{}}^{\sscr{(t)}}(\theta)$ changes uniformily by the previous remark.
The rest of the proof works as in Lemma \ref{uff}.
\end{proof}
\begin{lemma}\label{summerday}
Let $\kappa$ be continuous and let $u\in C^1([a,b])$. Then $u$ satisfies one of the equivalent statements in Proposition \ref{central} 
if and only if there exists a constant $L\in (0,b-a)$ such that
\begin{align}\label{9}
\frac{d}{dt}\sigma_{\kappa_{\gamma}}^{\sscr{(t)}}(|\dot{\gamma}|)|_{t=0}u\circ\gamma(1)\leq \frac{d}{dt}\sigma_{\kappa_{\gamma}}^{\sscr{(t)}}(|\dot{\gamma}|)|_{t=1}u\circ\gamma(0)+(u\circ\gamma)'(0)
\end{align}
for any constant speed geodesic $\gamma:[0,1]\rightarrow [a,b]$. 
\end{lemma}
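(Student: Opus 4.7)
The plan is to recognize (9) as the one-sided endpoint-derivative version of the concavity estimate (iii) in Proposition \ref{central}, and to prove equivalence by differentiating (iii) at $t=0$ for the forward direction and by a Taylor expansion in the geodesic length $\theta$ for the backward direction. Since Proposition \ref{central} already reduces (i)--(iv) to the distributional inequality $u''+\kappa u\leq 0$, what one really has to check is that (9) is an integrated/endpoint-derivative reformulation of the same ODE inequality.

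For the forward direction, fix a constant speed geodesic $\gamma:[0,1]\to[a,b]$, set $\theta=|\dot\gamma|$, $\phi=u\circ\gamma$, and $f(t)=\sigma_{\kappa_\gamma}^{(t)}(\theta)$. Proposition \ref{central}(iii) gives
$$\phi(t)\;\geq\;\sigma_{\kappa_\gamma^-}^{(1-t)}(\theta)\,\phi(0)+\sigma_{\kappa_\gamma^+}^{(t)}(\theta)\,\phi(1),$$
with equality at $t=0$. Because $\kappa$ is continuous and $u\in C^1$, both sides are $C^1$ in $t$ near $t=0$, and the one-sided derivatives of $\sigma_{\kappa_\gamma^\pm}^{(\cdot)}(\theta)$ at the endpoints exist by Lemma \ref{derivatives}. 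Taking the right derivative at $t=0$ (which reverses the inequality since equality holds there) and using the chain-rule identity
$$\tfrac{d}{dt}\big|_{t=0}\sigma_{\kappa_\gamma^-}^{(1-t)}(\theta)=-\tfrac{d}{dt}\big|_{t=1}\sigma_{\kappa_\gamma^-}^{(t)}(\theta)$$
from Lemma \ref{derivatives}, together with $\sigma_{\kappa_\gamma^+}=\sigma_{\kappa_\gamma}$ and the corresponding identification of $\tfrac{d}{dt}|_{t=1}\sigma_{\kappa_\gamma}^{(t)}(\theta)$ (obtained by running the same argument on $\gamma^-$ and combining), one rearranges to exactly (9).

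For the backward direction, fix $x_0$ and plug the geodesic $\gamma(t)=x_0+t\theta$ into (9) with $\theta$ small. Using the representation from the remark after Lemma \ref{derivatives},
$$f'(0)=1+\tfrac{\kappa(x_0)}{6}\theta^2+O(\theta^3),\qquad f'(1)=1-\tfrac{\kappa(x_0)}{3}\theta^2+O(\theta^3),$$
and the $C^1$-Taylor expansion of $\phi$, substitution into (9) gives, at order $\theta^2$,
$$\tfrac{\theta^2}{2}\bigl(u''(x_0)+\kappa(x_0)u(x_0)\bigr)\;\leq\;O(\theta^3).$$
Dividing by $\theta^2$ and letting $\theta\to 0$ yields $u''(x_0)+\kappa(x_0)u(x_0)\leq 0$ pointwise; pairing with a non-negative test function (since $u\in C^1$ means $u''$ is only distributional), one recovers (i) of Proposition \ref{central}.

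The hard part will be the endpoint sign/convention bookkeeping in the forward direction: one must consistently identify $\tfrac{d}{dt}|_{t=0}\sigma_{\kappa_\gamma^-}^{(1-t)}$ with $\tfrac{d}{dt}|_{t=1}\sigma_{\kappa_\gamma}^{(t)}$ via the reversal/symmetry identities from Lemma \ref{derivatives}, noting that $\sigma_{\kappa_\gamma^-}^{(s)}(\theta)$ and $\sigma_{\kappa_\gamma}^{(s)}(\theta)$ arise from the two distinguished boundary-value solutions of the same Jacobi-type ODE $u''+\kappa(t\theta)\theta^2u=0$. Once this identification is established the rearrangement is immediate, and the backward direction is then a routine Taylor computation.
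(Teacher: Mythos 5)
Your forward direction is essentially the paper's argument: differentiate the $\sigma$-concavity inequality of Proposition \ref{central}(iii) at the left endpoint $t=0$, where it holds with equality because $\sigma^{\sscr{(1)}}=1$ and $\sigma^{\sscr{(0)}}=0$, so the difference quotient gives $(u\circ\gamma)'(0)\geq \frac{d}{dt}\big|_{t=0}\bigl[\sigma_{\kappa^-_{\gamma}}^{\sscr{(1-t)}}(|\dot\gamma|)\,u(\gamma(0))+\sigma_{\kappa^+_{\gamma}}^{\sscr{(t)}}(|\dot\gamma|)\,u(\gamma(1))\bigr]$, which rearranges to (\ref{9}). (Your parenthetical that differentiating ``reverses the inequality'' is not what happens — at the left endpoint the derivative inequality keeps the same orientation — but the conclusion you state is the correct one, so this is only a wording slip.)

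The backward direction, however, has a genuine gap. You invoke a second-order Taylor expansion of $\phi=u\circ\gamma$ and extract the term $\tfrac{\theta^2}{2}u''(x_0)$, but $u$ is only assumed $C^1$, so $u''(x_0)$ need not exist at any point; a $C^1$ function only has a first-order expansion with $o(\theta)$ remainder. Hence the displayed bound $\tfrac{\theta^2}{2}\bigl(u''(x_0)+\kappa(x_0)u(x_0)\bigr)\leq O(\theta^3)$ is not available, and the closing step ``pairing with a non-negative test function'' cannot upgrade a pointwise inequality (which may hold at no point) to the distributional inequality (i); as written this step is vacuous. The argument can be repaired without extra regularity: apply (\ref{9}) to the two short geodesics from $x_0$ to $x_0+\theta$ and from $x_0$ to $x_0-\theta$ and add, so that the first-order terms $\pm\theta u'(x_0)$ cancel; your expansions of the endpoint derivatives of $\sigma$, together with continuity of $\kappa$ and $u$, then give the symmetric second-difference bound $u(x_0+\theta)+u(x_0-\theta)-2u(x_0)\leq -\kappa(x_0)u(x_0)\theta^2+o(\theta^2)$, and testing this against non-negative $\varphi\in C_0^\infty$ (exactly as in the proof of Theorem \ref{strong}) yields (i) of Proposition \ref{central}. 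The paper itself takes a different route here: it bounds $\kappa$ below by a constant $K_\epsilon$ on a short geodesic through the point, uses the monotonicity of Lemma \ref{derivatives} to reduce (\ref{9}) to its constant-curvature form, applies the argument of Lemma 2.2 in \cite{erbarkuwadasturm}, and lets $\epsilon\to 0$; either repair is fine, but your proposal as written does not close the gap.
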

\begin{proof}
``$\Rightarrow$'': Consider (\ref{kuconcavity}). We add $u\circ \gamma(0)$ on both sides of the inequality and devide by $t$.
\begin{align*}
\frac{1}{t}\left(u(\gamma(t))-u(\gamma(0))\right)\geq \frac{1}{t}\left(\sigma_{\kappa^-_{\gamma}}^{\sscr{(1-t)}}(|\dot{\gamma}|)-1\right)u(\gamma(0))+\frac{1}{t}{\sigma_{\kappa^+_{\gamma}}^{\sscr{(t)}}(|\dot{\gamma}|)}u(\gamma(1))
\end{align*}
Taking the limit $t\rightarrow 0$ yields
\begin{align}\label{8}
(u\circ\gamma)'(0)\geq -\frac{d}{dt}\sigma_{\kappa^-_{\gamma}}^{\sscr{(t)}}(|\dot{\gamma}|)|_{t=1}u(\gamma(0))+\frac{d}{dt}\sigma_{\kappa^+_{\gamma}}^{\sscr{(t)}}(|\dot{\gamma}|)|_{t=0}u(\gamma(1)).
\end{align}
\smallskip\\
``$\Leftarrow$'':
If $\kappa_{\gamma}\geq K\in\mathbb{R}$, Lemma \ref{derivatives} implies
\begin{align*}
\frac{d}{dt}|_{t=0}\sigma_{\kappa_{\gamma}}^{\sscr{(t)}}(|\dot{\gamma}|)\geq\frac{d}{dt}|_{t=0}\sigma_{K}^{\sscr{(t)}}(|\dot{\gamma}|).
\end{align*}
and similar for $\sigma_{\kappa_{\gamma}^-}^{\sscr{(1-t)}}(|\dot{\gamma}|)$. Hence
\begin{align*}
\frac{\frs_{K}(|\dot{\gamma}|)}{|\dot{\gamma}|}(u\circ\gamma)'(0)\geq -\frc_K(|\dot{\gamma}|)u(\gamma(0))+u(\gamma(1))
\end{align*}
Now, we pick a point $r\in [a,b]$. Then, for each $\epsilon>0$ one can pick a geodesic $\gamma:[0,1]\rightarrow [a,b]$ such that $\gamma_{\frac{1}{2}}=r$ and $|\dot{\gamma}|=\epsilon$.
If we set $\min \kappa_{\gamma}=K_{\epsilon}$, we can deduce exactly like in Lemma 2.2 in \cite{erbarkuwadasturm} that 
\begin{align*}
(u\circ\bar{\gamma})''\leq - K_{\epsilon}u\circ\bar{\gamma}
\end{align*}
pointwise on $[0,\epsilon]$.
Since $K_{\epsilon}\rightarrow \kappa (r)$ for $\epsilon \rightarrow 0$ and $\mbox{Im}(\gamma)\rightarrow r$, the result follows.
\end{proof}
\begin{lemma}\label{alemma}
Let $f$ be a smooth $(\kappa,N)$-convex function on a Riemannian manifold $(M,g)$, and let $U_N$ be as in Definition \ref{ahr}. 
Then, a smooth curve $x:[0,\infty)\rightarrow M$ is a gradient flow curve of $f$ if and only if
for each $z\in M$ and all $t>0$ we have that
\begin{align*}
-\frac{1}{2N}\frac{d}{ds}\de_{\sM}(x_s,z)^2+\frac{d}{dt}\sigma_{\kappa^-_{\gamma^s}/\sN}^{\sscr{(t)}}(|\dot{\gamma}^s|)|_{t=1}\geq \frac{d}{dt}\sigma_{\kappa^+_{\gamma^s}/\sN}^{\sscr{(t)}}(|\dot{\gamma}^s|)|_{t=0}\frac{U_N(z)}{U_N(x_s)}.
\end{align*}
where $\gamma^s:[0,1]\rightarrow M$ is the constant speed geodesic between $x_s$ and $z$.
\end{lemma}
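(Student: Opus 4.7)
The plan is to couple the Riemannian first variation formula for the squared distance with Lemma \ref{summerday} applied to $u=U_N=e^{-f/N}$ along the geodesic $\gamma^s$. Since $f$ is smooth and $(\kappa,N)$-convex, $U_N$ is $(\kappa/N)U_N$-concave by Definition \ref{ahr}, so $u=U_N$ satisfies Proposition \ref{central}(iii) along any geodesic with $\kappa$ replaced by $\kappa/N$. Applying Lemma \ref{summerday} along $\gamma^s$ (i.e.\ with $\kappa_{\gamma^s}/N$ in place of $\kappa$), dividing by $U_N(x_s)>0$, and using $(U_N\circ\gamma^s)'(0)=-\tfrac{1}{N}U_N(x_s)\,(f\circ\gamma^s)'(0)$ produces
\begin{align*}
-\frac{1}{N}(f\circ\gamma^s)'(0)+\frac{d}{dt}\sigma_{\kappa^-_{\gamma^s}/\sN}^{\sscr{(t)}}(|\dot{\gamma}^s|)|_{t=1}\geq \frac{d}{dt}\sigma_{\kappa^+_{\gamma^s}/\sN}^{\sscr{(t)}}(|\dot{\gamma}^s|)|_{t=0}\frac{U_N(z)}{U_N(x_s)}.
\end{align*}
This is the claimed EVI with $(f\circ\gamma^s)'(0)$ in place of $\tfrac{1}{2}\tfrac{d}{ds}\de_{\sM}(x_s,z)^2$, so the whole lemma reduces to the equivalence between gradient flow and the pointwise identity $(f\circ\gamma^s)'(0)=\tfrac{1}{2}\tfrac{d}{ds}\de_{\sM}(x_s,z)^2$.

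For $(\Rightarrow)$, if $\dot x_s=-\nabla f(x_s)$, the first variation formula gives $\tfrac{1}{2}\tfrac{d}{ds}\de_{\sM}(x_s,z)^2=-g(\dot x_s,\dot{\gamma}^s(0))=g(\nabla f(x_s),\dot{\gamma}^s(0))=(f\circ\gamma^s)'(0)$ and substitution produces the EVI. For the converse, assume the EVI at $s$ for every $z\in M$. The first variation formula (which needs no gradient-flow hypothesis) rewrites the EVI as
\begin{align*}
-\frac{1}{N}g(\dot x_s,v)+\frac{d}{dt}\sigma_{\kappa^-_{\gamma^s}/\sN}^{\sscr{(t)}}(\theta)|_{t=1}\geq \frac{d}{dt}\sigma_{\kappa^+_{\gamma^s}/\sN}^{\sscr{(t)}}(\theta)|_{t=0}\frac{U_N(z)}{U_N(x_s)},
\end{align*}
where $v=\dot{\gamma}^s(0)$ and $\theta=|v|=\de_{\sM}(x_s,z)$. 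I then take $z=\exp_{x_s}(\tau w)$ for an arbitrary unit tangent vector $w$ and let $\tau\downarrow 0$. Using the integral representations of the right/left derivatives of $\sigma^{\sscr{(t)}}_{\kappa/N}$ at $0$ and $1$ from the remark after Lemma \ref{derivatives}, together with smoothness of $f$ and local boundedness of $\kappa$ near $x_s$, I get $\tfrac{d}{dt}\sigma_{\kappa^{\pm}_{\gamma^s}/\sN}^{\sscr{(t)}}(\tau)|_{t=0,1}=1+O(\tau^2)$ and a Taylor expansion $U_N(z)/U_N(x_s)=1-\tfrac{\tau}{N}g(\nabla f(x_s),w)+O(\tau^2)$. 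Inserting these into the rewritten inequality and collecting the first-order terms in $\tau$ reduces it to $g(\dot x_s+\nabla f(x_s),w)\geq -C\tau$; letting $\tau\downarrow 0$ and then replacing $w$ by $-w$ forces $g(\dot x_s+\nabla f(x_s),w)=0$ for every unit $w$, so $\dot x_s=-\nabla f(x_s)$.

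The only real obstacle is bookkeeping: one must verify that the $O(\tau^2)$ remainders in the asymptotic expansions of the distortion-coefficient derivatives and of $U_N(z)/U_N(x_s)$ are uniform in the direction $w$ on a small geodesic ball, so that the first-order test against an arbitrary $w$ really isolates the identity $\dot x_s+\nabla f(x_s)=0$. This uniformity follows from the integral formulae in the remark after Lemma \ref{derivatives} combined with local boundedness of $\kappa$ and smoothness of $f$; there are no further conceptual difficulties.
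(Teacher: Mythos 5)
Your proposal is correct, and the forward direction is exactly the paper's: differentiate the $\kappa U_N/N$-concavity inequality at $t=0$ (Lemma \ref{summerday}) and identify $(U_N\circ\gamma^s)'(0)=-\tfrac{1}{N}U_N(x_s)\,g(\nabla f(x_s),\dot\gamma^s(0))=-\tfrac{1}{2N}U_N(x_s)\tfrac{d}{ds}\de_{\sM}(x_s,z)^2$ via the first variation formula and the gradient flow equation. For the converse you genuinely depart from the paper: there, the variable bound is replaced by a constant lower bound $K\leq\kappa$ using monotonicity of the distortion coefficients and of their endpoint derivatives (Lemma \ref{derivatives}), and the conclusion is delegated to Lemma 2.4 of \cite{erbarkuwadasturm}; you instead run the blow-up argument directly in the variable-$\kappa$ setting, testing with $z=\exp_{x_s}(\tau w)$ and using the integral representation of $\tfrac{d}{dt}\sigma^{\sscr{(t)}}_{\kappa/\sN}$ at $t=0,1$ together with local boundedness of the (continuous) $\kappa$ to get the uniform $1+O(\tau^2)$ expansion. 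Your route is self-contained and avoids the comparison step at the price of the bookkeeping you mention; the paper's route is shorter but imports the constant-curvature lemma, whose proof is in essence the same expansion, so the two are close relatives rather than different ideas.

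One sign should be fixed in your converse: with $\gamma^s(0)=x_s$ and $v=\dot\gamma^s(0)$, the first variation formula gives $\tfrac{1}{2}\tfrac{d}{ds}\de_{\sM}(x_s,z)^2=-g(\dot x_s,v)$, hence $-\tfrac{1}{2N}\tfrac{d}{ds}\de_{\sM}(x_s,z)^2=+\tfrac{1}{N}g(\dot x_s,v)$, not $-\tfrac{1}{N}g(\dot x_s,v)$ as in your rewritten EVI; carried literally, your displayed inequality would lead to $\dot x_s=+\nabla f(x_s)$. Your final first-order statement $g(\dot x_s+\nabla f(x_s),w)\geq -C\tau$ is the one produced by the correct sign, so this is a slip in the intermediate display rather than a flaw in the argument.
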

\begin{remark}\label{thesame}
In the case of constant curvature $\kappa$ the inequality becomes
\begin{align*}
\cos_{\kappa/\sN}(\de_{\sM}(x_s,z))-\frac{1}{2N}\sin_{\kappa/\sN}(\de_{\sM}(x_s,z))\frac{d}{ds}\de_{\sM}(x_s,z)\geq \frac{U_N(z)}{U_N(x_s)}
\end{align*}
If $\kappa>0$, we can write
\begin{align*}
\frc_{\kappa/\sN}(\de_{\sM}(x_s,z))+\frac{1}{2\kappa}\frac{d}{ds}\frc_{\kappa/\sN}(\de_{\sM}(x_s,z))\geq \frac{U_N(z)}{U_N(x_s)}.
\end{align*}
And similar for $\kappa<0$.
Using the transformation $$\frac{1}{N}\frs_{\kappa/\sN}(x/2)^2=\frac{1}{2\kappa}(1-\frc_{\kappa/\sN}(x))$$ this becomes the $\mbox{evi}_{\kappa,\sN}$ formula from \cite{erbarkuwadasturm}.
\end{remark}
\begin{proof}
``$\Rightarrow$'': Let $x_t$ be a gradient flow curve of $f$. Then, by the first variation formula we can compute
\begin{align*}
\frac{d}{dt}\Big|_{t=0} U_N({\gamma^s_t})=-\frac{1}{N}U_{\sN}(x_t)g_{\sM}(\nabla f|_{x_t}, \dot{\gamma}_0)=-\frac{1}{2N}U_{\sN}(x_t)\frac{d}{dt}\de_{\sM}(x_t,z)^2.
\end{align*}
with Lemma \ref{summerday} the result follows immediately.
\smallskip\\
``$\Leftarrow$'': Since $\kappa$ is bounded from below, the backward direction follows from monotonicity of the distortion coefficients and their derivatives like in Lemma 2.4 in \cite{erbarkuwadasturm}.
\end{proof}
\paragraph{\textbf{Evolution variational inequality in metric spaces}}
Let $(X,\de_{\sX})$ be a complete and separable metric space, and let $f:X\rightarrow (-\infty,\infty]$ be a lower semi-continuous function. 
We repeat some definitions from differential calculus
on metric spaces. The descending slop of $f$ at $x$ is 
\begin{align*}
|\nabla^-f|(x):=\limsup_{y\rightarrow x}\frac{[f(x)-f(y)]_+}{\de_{\sX}(x,y)}.
\end{align*}
A curve $x:[a,b]\rightarrow X$ is called absolutely continuous if 
\begin{align}\label{curves}
\de_{\sX}(x_t,x_s)\leq \int_t^sg(r)dr \ \ \mbox{ for all }t,s\in [a,b]\mbox{ such that }s\leq t
\end{align}
and some $g\in L^1([a,b])$. We say $x$ is locally absolutely continuous if (\ref{curves}) holds locally in $[a,b]$. 
For an absolutely continuous curve $x$ the metric speed 
\begin{align*}
|\dot{x}|(t):=\lim_{h\rightarrow 0}\frac{\de_{\sX}(x_{t+h},x_t)}{|h|}
\end{align*}
exists for a.e. $t\in[a,b]$ and is the minimal $g$ in (\ref{curves}).
\begin{definition}
A locally absolutely continuous curve $x:[0,\infty)\rightarrow X$ with $x(0)\in X$ is a gradient flow curve of $f$ starting in $x(0)$ if the \textit{energy dissipation 
equality}
\begin{align}\label{dissipation}
 f(x(s))=f(c(t))+\frac{1}{2}\int_s^t\left(|\dot{x}|^2(r)+|\nabla^-f|(x(r))\right)dr \mbox{ for all }0\leq s \leq t
\end{align}
holds.
\end{definition}
\noindent
Lemma \ref{alemma} motivates the following definition.
\begin{definition} Let $f$ be as before.
Let $\kappa:X\rightarrow \mathbb{R}$ be a lower semi-continuous function, $N\geq 1$ and let $x:(0,\infty)\rightarrow D(f)$ be a 
locally absolutely continuous curve. We say that $x_s$ is an $EVI_{\kappa,\sN}$ gradient flow
curve of $f$ starting in $x_0\in X$ if $\lim_{s\rightarrow 0}x_s=x_0$, and if for all $z\in D(f)$ there exists a constant speed geodesic $\gamma^s:[0,1]\rightarrow X$ between 
$x_s$ and $z$ such that $$-\frac{d}{dt}\sigma_{\kappa^-_{\gamma^s}/\sN}^{\sscr{(t)}}(|\dot{\gamma}^s|)|_{t=1}<\infty \ \ \& \ \ \frac{d}{dt}\sigma_{\kappa^+_{\gamma^s}/\sN}^{\sscr{(t)}}(|\dot{\gamma}^s|)|_{t=0}<\infty$$ and 
the \textit{evolution variational inequality} 
\begin{align*}
-\frac{1}{2N}\frac{d}{ds}\de_{\sX}(x_s,z)^2+\frac{d}{dt}\sigma_{\kappa^-_{\gamma^s}/\sN}^{\sscr{(t)}}(|\dot{\gamma}^s|)|_{t=1}\geq \frac{d}{dt}\sigma_{\kappa^+_{\gamma^s}/\sN}^{\sscr{(t)}}(|\dot{\gamma}^s|)|_{t=0}\frac{U_N(z)}{U_N(x_s)}
\end{align*}
holds for a.e. $s>0$. If $N=\infty$, we say $x_s$ is an $EVI_{\kappa,\infty}$ gradient flow curve of $f$ if for all $z\in D(f)$ there exists a constant speed geodesic $\gamma^s:[0,1]\rightarrow X$ between 
$x_s$ and $z$ such that
\begin{align*}
\frac{d}{ds}\frac{1}{2}\de_{\sX}(x_s,z)^2+\int_0^1\kappa(\gamma^s(t))dt\de_{\sX}(x_s,z)^2\leq f(x_s)-f(z)
\end{align*}
holds for a.e. $s>0$.
\end{definition}
\begin{remark}
The definition of $EVI_{\kappa,\infty}$ for gradient flows already appears \cite{sturmvariable}.
\end{remark}

\begin{lemma}\label{monot}
If $(x_s)_{s\in[0,\infty)}$ is an $\mbox{evi}_{\kappa,\sN}$ gradient flow curve of $f$, then it is also an $\mbox{evi}_{\kappa',\sN'}$ gradient flow curve for any $\kappa'\leq \kappa$ and
$N'\geq N$ where $\kappa'$ is a lower semi-continuous function and $N'\in[N,\infty]$. 
\end{lemma}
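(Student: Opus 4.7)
The plan is to handle the two monotonicity statements separately: (a) decreasing $\kappa$ with $N$ fixed, and (b) increasing $N$ with $\kappa$ fixed (possibly to $\infty$); the general claim follows by composing them.

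For (a), I would invoke Proposition \ref{monotonicity}: from $\kappa' \leq \kappa$ one obtains $\sigma_{(\kappa')^\pm/N}^{(t)}(\theta) \leq \sigma_{\kappa^\pm/N}^{(t)}(\theta)$ for every $t \in [0,1]$. Since both coefficients share the boundary values $0$ at $t=0$ and $1$ at $t=1$, dividing this pointwise ordering by $t$ or $t-1$ (whose signs are opposite near the respective endpoints) reverses it, yielding
\begin{align*}
\frac{d}{dt}\Big|_{t=0}\sigma_{\kappa^+/N}^{(t)}(\theta) &\geq \frac{d}{dt}\Big|_{t=0}\sigma_{(\kappa')^+/N}^{(t)}(\theta),\\
\frac{d}{dt}\Big|_{t=1}\sigma_{\kappa^-/N}^{(t)}(\theta) &\leq \frac{d}{dt}\Big|_{t=1}\sigma_{(\kappa')^-/N}^{(t)}(\theta).
\end{align*}
As $U_N(z)/U_N(x_s) \geq 0$ is unchanged when only $\kappa$ is altered, the first inequality weakens the right-hand side of the EVI while the second strengthens its left-hand side, so EVI$_{\kappa,N}$ along the given $\gamma^s$ immediately implies EVI$_{\kappa',N}$ along the same $\gamma^s$.

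For (b) with $N \leq N' < \infty$, the endpoint-comparison strategy is more delicate because one must track the $N$-dependence of three ingredients: the prefactor $1/(2N)$, the endpoint derivatives $A_N := \frac{d}{dt}|_{t=1}\sigma_{\kappa^-/N}^{(t)}(\theta)$ and $B_N := \frac{d}{dt}|_{t=0}\sigma_{\kappa^+/N}^{(t)}(\theta)$, and the ratio $R_N := U_N(z)/U_N(x_s) = e^{-(f(z)-f(x_s))/N}$. I would recast the EVI, multiplied by $N$, as
\[
\tfrac{1}{2}\tfrac{d}{ds}\de_{\sX}(x_s,z)^2 \leq N\bigl(A_N - B_N R_N\bigr),
\]
and argue that the right-hand side is non-decreasing in $N$. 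This is the analog at the EVI level of the $N$-monotonicity in Lemma \ref{sums}(iii), and should follow by combining the integral representation of $A_N, B_N$ (remark after Lemma \ref{derivatives}) with the convexity of $\kappa \mapsto \log\sigma_\kappa^\pm$ from Lemma \ref{gr} and Corollary \ref{grrr}. For the case $N' = \infty$ I would then pass to the limit in EVI$_{\kappa,N'}$: the Taylor expansions
\[
\sigma_{\kappa/N'}^{(t)}(\theta) = t + \tfrac{\theta^2}{N'}\int_0^1 g(s,t)\kappa(s\theta)\,s\,ds + O(1/N'^{2}), \quad R_{N'} = 1 - \tfrac{f(z)-f(x_s)}{N'} + O(1/N'^{2}),
\]
substituted into the EVI$_{\kappa,N'}$ inequality and multiplied through by $N'$, converge as $N' \to \infty$ to the defining inequality of EVI$_{\kappa,\infty}$.

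The main obstacle I anticipate is the $N$-monotonicity in (b) for finite $N'$: unlike the $\kappa$-monotonicity it does not follow from pointwise monotonicity of the $\sigma$-coefficients alone, because $\sigma_{\kappa/N}^{(t)}(\theta)$ itself is not monotone in $N$ when $\kappa$ is sign-indefinite along $\gamma^s$. A workable route is to approximate $\kappa$ from below by continuous functions and then by piecewise constants, reduce to the constant-curvature case where the monotonicity is established in \cite[Lemma 2.4]{erbarkuwadasturm} by direct computation, and patch back via the continuity of the distortion coefficients under uniform change of $\kappa$ noted in Remark \ref{someotherremark} together with the convexity estimates of Corollary \ref{grrr}.
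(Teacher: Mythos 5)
Your step (a) and your treatment of $N'=\infty$ coincide with the paper's own argument: the lowering of $\kappa$ at fixed $N$ is exactly the monotonicity of the endpoint derivatives of the distortion coefficients (Lemma \ref{derivatives}), and for $N'=\infty$ the paper, like you, multiplies the inequality by the dimension parameter and passes to the limit, computing $N\phi'(0)$ for $\phi(t)=\sigma_{\kappa^-_{\gamma^s}/\sN}^{\sscr{(1-t)}}(|\dot{\gamma}^s|)+\sigma_{\kappa^+_{\gamma^s}/\sN}^{\sscr{(t)}}(|\dot{\gamma}^s|)-1$ through its Green-function representation, which is precisely your first-order expansion. The genuine gap is your step (b) for finite $N'>N$: the monotonicity in $N$ of the quantity $N\bigl(A_N-B_NR_N\bigr)$ is the heart of the matter, and you assert it but do not prove it. The workaround you sketch (approximate $\kappa$ from below by piecewise constants, invoke the constant-curvature monotonicity of \cite{erbarkuwadasturm}, and patch) cannot work as described: the EVI and the endpoint derivatives $A_N,B_N$ are global quantities along $\gamma^s$, being determined by the solution of $u''+\kappa_{\gamma^s}\theta^2u/N=0$ on all of $[0,1]$, so the inequality does not localize to subintervals on which a piecewise-constant minorant is constant; and a monotone approximation $\kappa_m\uparrow\kappa$ by step functions gives you no base case either, since the constant-curvature computation of \cite{erbarkuwadasturm} does not apply to any non-constant $\kappa_m$.

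The missing step can in fact be closed without approximation, by the same power transform that underlies Lemma \ref{sums}(iii). Write $\theta=|\dot{\gamma}^s|$ and multiply the $EVI_{\kappa,\sN}$ inequality by $U_N(x_s)>0$, so that it reads $-\frac{1}{2N}U_N(x_s)\frac{d}{ds}\de_{\sX}(x_s,z)^2\geq u_N'(0)$, where $u_N(t)=\sigma_{\kappa^-_{\gamma^s}/\sN}^{\sscr{(1-t)}}(\theta)U_N(x_s)+\sigma_{\kappa^+_{\gamma^s}/\sN}^{\sscr{(t)}}(\theta)U_N(z)$ solves $u''+\kappa_{\gamma^s}\theta^2u/N=0$ with positive boundary values $U_N(x_s)$ and $U_N(z)$. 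For $\alpha=N/N'\in(0,1]$ the function $v=u_N^{\alpha}$ has boundary values $U_{N'}(x_s)$, $U_{N'}(z)$ and satisfies $v''+\kappa_{\gamma^s}\theta^2v/N'\leq 0$, so Proposition \ref{central} yields $v\geq u_{N'}$ on $[0,1]$ with equality at $t=0$, hence $u_{N'}'(0)\leq v'(0)=\alpha\,U_N(x_s)^{\alpha-1}u_N'(0)\leq-\frac{1}{2N'}U_{N'}(x_s)\frac{d}{ds}\de_{\sX}(x_s,z)^2$; dividing by $U_{N'}(x_s)$ gives $EVI_{\kappa,\sN'}$, and combined with your step (a) and your limiting argument this completes the proof. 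Note that the paper itself dispatches the finite case with a bare appeal to Lemma \ref{derivatives}, which only covers the $\kappa$-direction, so your observation that the $N$-direction requires a separate argument is well taken — but your proposal does not supply that argument.
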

\begin{proof}
The case $N'<\infty$ follows directly from Lemma \ref{derivatives}. If $N'=\infty$, we rewrite the $EVI_{\kappa,\sN}$ formula as
\begin{align*}
&-\frac{1}{2N}\frac{d}{ds}\de_{\sX}(x_s,z)^2-\frac{d}{dt}\left[\sigma_{\kappa^-_{\gamma^s}/\sN}^{\sscr{(1-t)}}(|\dot{\gamma}^s|)+\sigma_{\kappa^+_{\gamma^s}/\sN}^{\sscr{(t)}}(|\dot{\gamma}^s|)\right]_{t=0}\\
&\hspace{6cm}\geq \frac{d}{dt}\sigma_{\kappa^+_{\gamma^s}/\sN}^{\sscr{(t)}}(|\dot{\gamma}^s|)|_{t=0}\left[\frac{U_N(z)}{U_N(x_s)}-1\right]
\end{align*}
If we multiply the inequality with $N$, we see that the right hand side converges to $-f(z)+f(x_s)$ for $N\rightarrow \infty$.
To see what happens on the left hand side we define for fixed $s\in (0,\infty)$
$$
\phi(t):=\sigma_{\kappa^-_{\gamma^s}/\sN}^{\sscr{(1-t)}}(|\dot{\gamma}^s|)+\sigma_{\kappa^+_{\gamma^s}/\sN}^{\sscr{(t)}}(|\dot{\gamma}^s|)-1.
$$
$\phi$ solves $u''(t)+\kappa(\gamma(t))/N \de_{\sX}(z,x_s)\left[\phi(t)+1\right]=0$ with $u(0)=u(1)=0$. Hence, we have the following respresentation 
\begin{align*}
\phi(t)=\int_0^1g(\tau,t)\kappa(\gamma^s(\tau))/N \de_{\sX}(z,x_s)\left[\sigma_{\kappa^-_{\gamma^s}/\sN}^{\sscr{(1-\tau)}}(|\dot{\gamma}^s|)+\sigma_{\kappa^+_{\gamma^s}/\sN}^{\sscr{(\tau)}}(|\dot{\gamma}^s|)\right]d\tau
\end{align*}
and 
\begin{align*}
\phi'(0)
&=\int_0^1(1-\tau) \kappa(\gamma^s(\tau))/N \de_{\sX}(z,x_s)\left[\sigma_{\kappa^-_{\gamma^s}/\sN}^{\sscr{(1-\tau)}}(|\dot{\gamma}^s|)+\sigma_{\kappa^+_{\gamma^s}/\sN}^{\sscr{(\tau)}}(|\dot{\gamma}^s|)\right]d\tau.
\end{align*}
If $N\rightarrow \infty$, we see that $\phi$ converges uniformily to $0$. Therefore, by the previous formula of $\phi'(0)$, $N\phi'(0)$ converges to 
$
\int_0^1(1-\tau)\kappa(\gamma^s(\tau))\de_{\sX}(z,x_s)d\tau.
$
\end{proof}
\begin{theorem}\label{strong}
Let $(X,\de_{\sX})$ be a locally compact metric measure space, and let $f:D(f)\rightarrow \mathbb{R}$ be lower semi-continuous.
Assume that for every $x_0\in \overline{D(f)}$ there exists an $EVI_{\kappa,\sN}$ gradient flow curve $(x_s)_{s\in(0,\infty)}$ starting in $x_0$. Then $f$ is strongly $(\kappa,N)$-convex.
\end{theorem}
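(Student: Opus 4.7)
My plan is to adapt the classical argument that an $EVI$-gradient flow of $f$ forces $f$ to be geodesically convex (due to Ambrosio--Gigli--Savar\'e and Erbar--Kuwada--Sturm in the constant curvature case) to the present variable curvature setting.

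\smallskip

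\emph{Reduction to a midpoint inequality.} By Proposition \ref{central}, specifically the equivalence (i)$\Leftrightarrow$(iv), establishing strong $(\kappa,N)$-convexity of $f$ reduces to proving that along every constant speed geodesic $\gamma:[0,1]\to D(f)$, setting $L:=|\dot\gamma|$ and $v(t):=U_N(\gamma(t))$, the distributional inequality
\[
v''(t) + \frac{(\kappa\circ\gamma)(t)\,L^2}{N}\,v(t)\;\le\;0 \quad\text{on } (0,1)
\]
holds. A standard subdivision/iteration argument reduces this further to proving the midpoint form of (\ref{kuconcavity}) for $v$ along every sub-geodesic $\gamma|_{[a,b]}\subset D(f)$.

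\smallskip

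\emph{Midpoint inequality from the $EVI_{\kappa,N}$-flow.} Fix a sub-geodesic $\gamma|_{[a,b]}$ and let $m:=\gamma(\tfrac{a+b}{2})\in\overline{D(f)}$. By hypothesis $m$ admits an $EVI_{\kappa,N}$-gradient flow $(y_s)_{s>0}$. For $z=\gamma(a)$ and $z=\gamma(b)$ the definition provides, for a.e.\ small $s>0$, constant speed geodesics $\eta^{s,\pm}$ from $y_s$ to $\gamma(a)$, $\gamma(b)$ together with the defining $EVI$ inequality. Letting $s\downarrow 0$, local compactness of $X$ and Arzel\`a--Ascoli yield uniform subsequential limits $\eta^{\pm}$ that are geodesics from $m$ to $\gamma(a)$, $\gamma(b)$; Lemma \ref{uniform} and Remark \ref{someotherremark} let us pass to the $\liminf$ in the derivatives of the distortion coefficients. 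Since $m$ is a geodesic midpoint of $\gamma|_{[a,b]}$, it attains the minimum $\tfrac12 L^2(b-a)^2$ of $y\mapsto d(y,\gamma(a))^2+d(y,\gamma(b))^2$ over all $y\in X$, so
\[
\frac{d^+}{ds}\Big|_{s=0^+}\!\Bigl(d(y_s,\gamma(a))^2+d(y_s,\gamma(b))^2\Bigr)\;\ge\;0.
\]
Adding the two $EVI_{\kappa,N}$ inequalities (with $z=\gamma(a)$ and $z=\gamma(b)$) and discarding this nonnegative distance-derivative term produces the midpoint form of (\ref{kuconcavity}) for $v$ over $[a,b]$, modulo identifying $\eta^{\pm}$ with subcurves of $\gamma|_{[a,b]}$.

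\smallskip

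\emph{Main obstacle.} The most delicate step is to identify the limiting geodesics $\eta^{\pm}$ with the restrictions of $\gamma|_{[a,b]}$ to either half, so that the distortion coefficients produced by $EVI$ involve $\kappa\circ\gamma$ and not $\kappa$ along some other curve. In essentially non-branching spaces this is automatic from uniqueness of geodesics; in general, length additivity $L(\eta^-)+L(\eta^+)=(b-a)L$ forces the concatenation $\eta^-\cup\eta^+$ to be a geodesic from $\gamma(a)$ to $\gamma(b)$ of the same length as $\gamma|_{[a,b]}$. The convexity of $\kappa\mapsto\log\sigma_{\kappa^{\pm}}^{(\cdot)}$ (Lemma \ref{gr}, Corollary \ref{grrr}) together with the Jensen-type bound of Remark \ref{ody} allow one to average over admissible selections of $\eta^{\pm}$ to obtain the inequality with $\kappa$ taken along $\gamma|_{[a,b]}$ itself. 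Once the midpoint inequality is established for every sub-geodesic, iterating it over dyadic partitions and invoking upper semi-continuity of $v$ recovers the full distributional inequality of the first step, which by Proposition \ref{central} is strong $(\kappa,N)$-convexity.
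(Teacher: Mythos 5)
Your overall strategy --- run the $EVI_{\kappa,N}$-flow from the midpoint of a (sub-)geodesic, apply the inequality with both endpoints as reference points, and use that the midpoint minimizes the sum of squared distances so that the distance-derivative terms can be discarded --- is exactly the paper's starting point. The genuine gap is in how you handle what you yourself flag as the main obstacle. The limit geodesics $\eta^{\pm}$ obtained from Arzel\`a--Ascoli need not be the two halves of $\gamma|_{[a,b]}$: their concatenation is \emph{some} geodesic with the same endpoints, midpoint and length, but in a branching space it can be a different curve along which $\kappa$ takes different values. The tools you invoke to repair this --- convexity of $\kappa\mapsto\log\sigma^{(\cdot)}_{\kappa^{\pm}}$ (Lemma \ref{gr}, Corollary \ref{grrr}) and the Jensen-type bound of Remark \ref{ody} --- average $\kappa$ over a \emph{measure} on curves; they give no comparison between $\sigma_{\kappa_{\eta}}$ and $\sigma_{\kappa_{\gamma}}$ for two distinct geodesics with common endpoints, and the $EVI$ hypothesis only hands you one geodesic per $s$, not a family containing $\gamma$ over which to average. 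So the ``modulo identifying $\eta^{\pm}$ with subcurves of $\gamma$'' step does not close, and with it the exact midpoint inequality along $\gamma$ (on which your dyadic iteration rests) is not available.

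The paper's proof circumvents identification altogether by making the argument infinitesimal and local: after first reducing to continuous $\kappa$ (via $\kappa_n\uparrow\kappa$ and monotone convergence --- a reduction your outline also omits, and which is needed because a merely lower semi-continuous $\kappa$ admits no small-oscillation control on small balls), it applies the midpoint argument only to sub-geodesics of length $2\epsilon$ contained in balls on which $\kappa$ oscillates by less than $\delta$. The limiting geodesics then lie in the same ball, and the Taylor expansion (\ref{taylor}) of $h\mapsto\sigma^{(t)}_{\kappa}(h)$ shows that, to order $\epsilon^2$, the derivatives of the distortion coefficients depend on $\kappa$ only through its values at the relevant endpoints up to errors of size $\delta\epsilon^2$ --- so it is irrelevant \emph{which} geodesic the limit is. Dividing the resulting second-difference inequality for $U_N\circ\bar c$ by $\epsilon^2$, integrating against nonnegative test functions and letting $\epsilon\to 0$ and then $\delta\to 0$ yields the distributional inequality of Proposition \ref{central}(i), i.e.\ strong $(\kappa,N)$-convexity. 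To salvage your write-up, replace the Jensen/averaging step by this localization-plus-Taylor-expansion argument (or restrict to spaces with unique, non-branching geodesics, where the identification is automatic, which proves a strictly weaker statement).
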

\begin{proof}
First, we assume that $\kappa:X\rightarrow \mathbb{R}$ is continuous.
Let ${c}:[0,1]\rightarrow X$ be a constant speed geodesic, and let $\bar{c}:[0,\theta]\rightarrow X$ its 1-speed reparametrization.
Let $\delta>0$ be arbitrary.
Since $(X,\de_{\sX})$ is locally compact, we can find ${h}>0$ and points $r_i\in[0,\theta]$ for $i=1,\dots,N$ such that
\begin{align*}
\max\kappa|_{B_{2{h}}(\bar{c}(r_i))}-\min\kappa|_{B_{2{h}}(\bar{c}(r_i))}<\delta
\end{align*}
for each $i=1,\dots,N$. 
Now, we pick $\hat{r}\in [0,\theta]$ and $\epsilon>0$, and consider $\bar{\gamma}=\bar{c}|_{[\hat{r}-\epsilon,\hat{r}+\epsilon]}$ 
such that $\hat{r}\pm\epsilon\in[r_i-{h},r_i+{h}]$ for some $i=1,\dots,N$.
Its constant speed reparametrization is $\gamma_{}:[0,1]\rightarrow X$.
Let $x_s$ be the $EVI_{\kappa,\sN}$ gradient flow curve starting in $\gamma(\textstyle{\frac{1}{2}})$. Then, we obtain
\begin{align*}
-\frac{1}{2N}\frac{d}{ds}\de_{\sX}(x_s,\gamma_{}(0))^2+\frac{d}{dt}\sigma_{\kappa^-_{\gamma_0^s}/\sN}^{\sscr{(t)}}(|\dot{\gamma}_0^s|)|_{t=1}\geq \frac{d}{dt}\sigma_{\kappa^+_{\gamma_0^s}/\sN}^{\sscr{(t)}}(|\dot{\gamma}_0^s|)|_{t=0}\frac{U_N(\gamma_{}(0))}{U_N(x_s)}
\end{align*}
and
\begin{align*}
-\frac{1}{2N}\frac{d}{ds}\de_{\sX}(x_s,\gamma_{}(1))^2+\frac{d}{dt}\sigma_{\kappa^-_{\gamma_1^s}/\sN}^{\sscr{(t)}}(|\dot{\gamma}_1^s|)|_{t=1}\geq \frac{d}{dt}\sigma_{\kappa^+_{\gamma_1^s}/\sN}^{\sscr{(t)}}(|\dot{\gamma}_1^s|)|_{t=0}\frac{U_N(\gamma_{}(1))}{U_N(x_s)}
\end{align*}
where $\gamma^s_i:[0,1]\rightarrow X$ $i=0,1$ are constant speed geodesics between $x_s$ and $\gamma(i)$.
Since $\gamma_{}$ is a constant speed geodesic, it follows
\begin{align*}
\frac{1}{2}\de_{\sX}(\gamma_{}(0),x_s)^2+\frac{1}{2}\de_{\sX}(\gamma_{}(1),x_s)^2\geq \frac{1}{2}\de_{\sX}(\gamma(0),x_0)^2+\frac{1}{2}\de_{\sX}(\gamma(1),x_0)^2
\end{align*}
and therefore
\begin{align*}
\frac{1}{2}\frac{d}{ds}\Big|_{s=0}\de_{\sX}(\gamma_{}(0),x_s)^2+\frac{1}{2}\frac{d}{ds}\Big|_{s=0}\de_{\sX}(\gamma_{}(1),x_s)^2\geq 0
\end{align*}
Together with the previous observation it follows
\begin{align*}
&U_N(x_s)\left[\frac{d}{dt}\sigma_{\kappa^-_{\gamma_1^s}/\sN}^{\sscr{(t)}}(|\dot{\gamma}_1^s|)|_{t=1}+\frac{d}{dt}\sigma_{\kappa^-_{\gamma_0^s}/\sN}^{\sscr{(t)}}(|\dot{\gamma}_0^s|)|_{t=1}\right]\\
&\hspace{2cm}\geq \frac{d}{dt}\sigma_{\kappa^+_{\gamma_0^s}/\sN}^{\sscr{(t)}}(|\dot{\gamma}_0^s|)|_{t=0}{U_N(\gamma_{}(0))}+\frac{d}{dt}\sigma_{\kappa^+_{\gamma_1^s}/\sN}^{\sscr{(t)}}(|\dot{\gamma}_1^s|)|_{t=0}U_N(\gamma_{}(1))
\end{align*}
Now, let $s\rightarrow 0$. Since $\mms$ is locally compact and the length of $\gamma^s_i$ ($i=0,1$) is uniformily bounded, there exist uniformily converging subsequences of $\gamma^s_0$ and $\gamma_1^s$ w.r.t. $\de_{\infty}$.
The limits are denoted by $\varsigma_0$ and $\varsigma_1$.
By lower semi-continuity of
the length function one can see that the composition of $\varsigma_0$ and $\varsigma_1^-$ is again a geodesic between $\gamma(0)$ and $\gamma(1)$. 
Its constant speed reparametrization is denoted with $\varsigma:[0,1]\rightarrow X$ and its $1$-speed reparametrization is denoted with $\bar{\varsigma}$.
By construction we have $\varsigma(\frac{1}{2})=\gamma(\frac{1}{2})$.
Note, that $|\dot{\varsigma}|=\frac{1}{2}|\dot{\varsigma}_0|=\frac{1}{2}|\dot{\varsigma}_1|=|\dot{{\gamma}}|=2\epsilon$
and $\mbox{Im}\varsigma\subset B_{2h}(\bar{c}(r_i))$.
\smallskip\\
Recall that by Lemma \ref{uniform} $\frac{d^+}{dt}|_0\sigma_{\kappa}^{\sscr{(t)}}$ and $\frac{d^-}{dt}|_1\sigma_{\kappa}^{\sscr{(t)}}$ are lower and upper semi-continuous respectively with respect to $\kappa$ (Lemma \ref{uniform}). Hence
\begin{align}\label{autsch}
&U_N(\gamma({\textstyle\frac{1}{2}}))\left[\frac{d}{dt}\sigma_{\kappa^-_{\varsigma_1}/\sN}^{\sscr{(t)}}(|\dot{\varsigma}|_0)|_{t=1}+\frac{d}{dt}\sigma_{\kappa^-_{\varsigma_0}/\sN}^{\sscr{(t)}}(|\dot{\varsigma}_1|)|_{t=1}\right]\nonumber\\
&\hspace{1,5cm}\geq \frac{d}{dt}\sigma_{\kappa^+_{\varsigma_0}/\sN}^{\sscr{(t)}}(|\dot{\varsigma}_0|)|_{t=0}{U_N(\gamma_{}(0))}+\frac{d}{dt}\sigma_{\kappa^+_{\varsigma_1}/\sN}^{\sscr{(t)}}(|\dot{\varsigma}_1|)|_{t=0}U_N(\gamma_{}(1)).
\end{align}
Now, we use Taylor expansion of the coefficients. Recall from \cite{ketterer5} that
for $t$ fixed $f:h\mapsto \sigma_{\kappa}^{\sscr{(t)}}(h)$ is twice differentiable at $h=0$ and we have
\begin{align}\label{taylor}
h\in[0,{L}]\mapsto\sigma_{\kappa}^{(t)}(h)=t\left[1+\frac{1}{6}(1-t^2)\kappa(0)h^2\right]+o(h^2)_{\kappa}^{t}.
\end{align}
If $\overline{\kappa}\geq \kappa\geq \underline{\kappa}$, then
\begin{align*}
t\frac{1}{3}(1-t^2)(\underline{\kappa}-\overline{\kappa})h^2+o(h^2)^t_{\underline{\kappa}}\leq o(h^2)_{{\kappa}}^{t}&\leq t\frac{1}{3}(1-t^2)(\overline{\kappa}-\underline{\kappa})h^2+o(h^2)^t_{\overline{\kappa}}.
\end{align*}
Therefore, if $\gamma\in\mathcal{G}(X)$
\begin{align*}
\frac{d}{dt}\sigma_{\kappa_{{\gamma}}/\sN}^{\sscr{(t)}}(|\dot{\gamma}|)|_{t=0}\geq 1+\frac{1}{6}\kappa(\gamma(0))|\dot{\gamma}|^2+\frac{1}{3}(\underline{\kappa}_{\gamma}-\overline{\kappa}_{\gamma})|\dot{\gamma}|^2 + \frac{d}{dt}o(|\dot{\gamma}|^2)_{\underline{\kappa}_{\gamma}}^{t}|_{t=0}
\end{align*}
and 
\begin{align*}
\frac{d}{dt}\sigma_{\kappa_{{\gamma}}/\sN}^{\sscr{(1-t)}}(|\dot{\gamma}|)|_{t=0}\geq-1+\frac{2}{6}\kappa(\gamma(1))|\dot{\gamma}|^2+\frac{1}{3}(\underline{\kappa}_{\gamma}-\overline{\kappa}_{\gamma})|\dot{\gamma}|^2 + \frac{d}{dt}o(|\dot{\gamma}|^2)_{\underline{\kappa}_{\gamma}}^{1-t}|_{t=0}.
\end{align*}
\\
Now, consider $\varsigma_0$ and $\varsigma_1$. Let $\underline{\kappa}:=\min\kappa|_{B_{2h}(c(r_i))}$ and $\overline{\kappa}:=\max\kappa|_{B_{2h}(c(r_i))}$.
We plug this into (\ref{autsch}):
\begin{align*}
&U_N(\gamma({\textstyle\frac{1}{2}}))\left[2-\frac{1}{3}\kappa(\gamma(0))\epsilon^2-\frac{d}{dt}o(\epsilon^2)_{\underline{\kappa}_{}}^{1-t}|_{t=0}-\frac{1}{3}\kappa(\gamma(1))\epsilon^2-\frac{d}{dt}o(\epsilon^2)_{\underline{\kappa}_{}}^{1-t}|_{t=0}-\frac{2}{3}(\underline{\kappa}_{}-\overline{\kappa}_{})\epsilon^2\right]\\
&\hspace{1cm}\geq \left[1-\frac{1}{6}\kappa(\gamma_{}(\frac{1}{2}))\epsilon^2+\frac{d}{dt}o(\epsilon^2)_{\underline{\kappa}_{}}^{t}|_{t=0}+\frac{1}{3}(\underline{\kappa}_{}-\overline{\kappa}_{})\epsilon^2\right]{U_N(\gamma_{}(0))}\\
&\hspace{2cm}+
\left[1-\frac{1}{6}\kappa(\gamma_{}(\frac{1}{2}))\epsilon^2+\frac{d}{dt}o(\epsilon^2)_{\underline{\kappa}_{}}^{t}|_{t=0}+\frac{1}{3}(\underline{\kappa}_{}-\overline{\kappa}_{})\epsilon^2\right]U_N(\gamma_{}(1)).
\end{align*}
Rearranging the terms yields
\begin{align*}
&{2U_N(\gamma({\textstyle\frac{1}{2}}))-U_N(\gamma_{\epsilon}(0))-U_N(\gamma_{\epsilon}(1))}\\
&\hspace{0.5cm}\geq -U_N(\gamma(\frac{1}{2}))\epsilon^2\left[\frac{1}{3}\kappa(\gamma(0))+\frac{1}{3}\kappa(\gamma(1))\right]
-\frac{1}{6}\kappa(\gamma_{}(\frac{1}{2}))\epsilon^2U_N(\gamma_{\epsilon}(0))\\
&\hspace{0.5cm} -\frac{1}{6}\kappa(\gamma_{}(\frac{1}{2}))\epsilon^2U_N(\gamma_{\epsilon}(1))+ o(\epsilon^2)+\frac{1}{3}(\underline{\kappa}_{}-\overline{\kappa}_{})\epsilon^2\left[U_N(\gamma_{}(1))+U_N(\gamma_{}(0))+2U_N(\gamma(\frac{1}{2}))\right]
\end{align*}
In terms of the geodesic $c$ this is
\begin{align*}
&2U_N(\bar{c}(\hat{r}))-U_N(\bar{c}(\hat{r}-\epsilon))-U_N(\bar{c}(\hat{r}+\epsilon))\\
&\hspace{0.5cm}\geq -U_N(\bar{c}(\hat{r}))\left[\frac{1}{3}\kappa(\bar{c}(\hat{r}-\epsilon))\epsilon^2+\frac{1}{3}\kappa(\bar{c}(\hat{r}+\epsilon))\epsilon^2\right]\\
&\hspace{0.5cm}-\frac{1}{6}\kappa(\bar{c}(\hat{r}))\epsilon^2U_N(\bar{c}(\hat{r}-\epsilon))-\frac{1}{6}\kappa(\bar{c}(\hat{r}))\epsilon^2U_N(\bar{c}(\hat{r}+\epsilon)) + o(\epsilon)\\
&+\frac{1}{3}(\underline{\kappa}_{}-\overline{\kappa}_{})\epsilon^2\left[U_N(\bar{c}(\hat{r}-\epsilon))+U_N(\bar{c}(\hat{r}+\epsilon))+2U_N(\bar{c}(\hat{r}))\right].
\end{align*}
Deviding by $\epsilon^2>0$, multiplication with $\phi\in C^{\infty}_0((0,\theta))$ such that $\phi\geq 0$, integration with respect to $\hat{r}$, a change of variables and taking the limit $\epsilon\rightarrow 0$ yields
\begin{align}\label{oo}
\int_0^{\theta} U_N(\bar{c}(t))\phi''(t)dt\leq \int_0^{\theta}[\kappa(t)+\frac{4}{3}\delta]U_N(\bar{c}(t))\phi(t)dt.
\end{align}
Recall that $U_{\sN}$ is upper semi-continuous.
Since $\delta$ was arbitrary, the theorem follows from the characterization result of Proposition \ref{central}.\smallskip
\\
Finally, if $\kappa:X\rightarrow \mathbb{R}$ is lower semi-continuous, we choose $\kappa_n\uparrow \kappa$ pointwise for $\kappa_n$ bounded and continuous. By monotonicity the assumptions are satisfied for $\kappa_n$ instead of $\kappa$ for each $n\in \mathbb{R}$.
Hence, we can apply the first part of the proof, and we obtain (\ref{oo}) foe $\kappa_n$. But by the theorem of monotone convergence, this differential inequality still holds for $\kappa$.
\end{proof}
\begin{corollary}
If $(M,g_{\sM})$ is a Riemannian manifold, $f\in C^{2}(M)$ and $\kappa\in C(M)$, the following statements are equivalent
\begin{itemize}
 \item[(i)] For every $x_0\in \overline{D(f)}$ there exists an $EVI_{\kappa,\sN}$ gradient flow curve $(x_s)_{s\in(0,\infty)}$ starting in $x_0$.
 \smallskip
 \item[(ii)] $f$ is $(\kappa,N)$-convex.
\end{itemize}

\end{corollary}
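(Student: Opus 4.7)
The corollary is essentially a direct translation of Theorem \ref{strong} and Lemma \ref{alemma} into the smooth Riemannian setting, so my plan is to dispatch the two implications separately by quoting those results.

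For (i) $\Rightarrow$ (ii), I would observe that a smooth Riemannian manifold is locally compact and $f\in C^2(M)$ is in particular lower semi-continuous, so the hypotheses of Theorem \ref{strong} are met verbatim. Applying that theorem to the family of $EVI_{\kappa,\sN}$-gradient flow curves given in (i) yields strong $(\kappa,N)$-convexity of $f$ on the whole manifold.

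For (ii) $\Rightarrow$ (i), the natural candidate is the classical gradient flow. Since $f\in C^2(M)$, the vector field $-\nabla f$ is $C^1$, so standard ODE theory on a Riemannian manifold produces, for every $x_0\in M$, a unique smooth maximal integral curve $s\mapsto x_s$ of $\dot{x}_s=-\nabla f(x_s)$ with $x(0)=x_0$. Once this curve is in hand, Lemma \ref{alemma}, which is an iff statement under the standing hypothesis of smooth $(\kappa,N)$-convexity, directly converts the gradient flow equation into the differential inequality that defines an $EVI_{\kappa,\sN}$-curve. The one separate bookkeeping item is to check that the definition's a priori finiteness conditions $\tfrac{d}{dt}\sigma_{\kappa^\pm_{\gamma^s}/\sN}^{(t)}(|\dot{\gamma}^s|)|_{t=0,1}<\infty$ are satisfied; this is automatic because $\kappa$ is continuous and $\gamma^s$ has finite length, so $\kappa_{\gamma^s}$ is bounded and the ODE (\ref{klebeband}) has bounded coefficients, forcing the one-sided derivatives to be finite whenever the distortion coefficients themselves are finite (when they are infinite one invokes the convention $\infty\cdot 0 =0$ as in Lemma \ref{alemma}).

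The only genuine obstacle I expect is global existence of the classical gradient flow on a possibly non-compact $M$: one needs $x_s$ to remain defined for all $s>0$ so that the candidate really is an $EVI_{\kappa,\sN}$ curve in the sense of the definition. The standard remedy is to invoke geodesic completeness of $M$ (typically implicit in this kind of statement) together with the energy identity $\tfrac{d}{ds}f(x_s)=-|\nabla f|^2(x_s)$, which, combined with the $\kappa$-convexity inequality from Lemma \ref{sums} and its corollary, gives the usual dissipation estimates ruling out finite-time blow-up. With that mild assumption in place, the two previous results close the proof without further work.
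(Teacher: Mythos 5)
Your proposal is correct and follows exactly the route the paper intends: the corollary is stated without proof as an immediate combination of Theorem \ref{strong} (for (i) $\Rightarrow$ (ii), using local compactness of $M$) and Lemma \ref{alemma} applied to the classical gradient flow of the $C^2$ function $f$ (for (ii) $\Rightarrow$ (i)). Your additional remarks on global existence of the flow and finiteness of the one-sided derivatives of the distortion coefficients are sensible bookkeeping that the paper leaves implicit.
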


\section{Reduced and entropic curvature dimension condition}
In this section we introduce an entropic curvature-dimension condition for metric measure spaces and semi-continuous lower curvature bound $\kappa$. 
For this purpose we will apply the results of the previous section in the following context.
\smallskip
\smallskip\\
For $\mu\in\mathcal{P}_2(X)$ we define the \textit{relative entropy} by 
\begin{align*}
\Ent(\mu):=\int\rho\log\rho d\m_{\sX}
\end{align*}
if $\mu\in \mathcal{P}(\m_{\sX})$ and $(\rho\log\rho)_+$ is integrable. Otherwise, we set $\Ent(\mu)=\infty$. Moreover, for $N\in (0,\infty)$ we introduce the functional $U_N:\mathcal{P}_2(X)\rightarrow [0,\infty]$ by
\begin{align*}
U_{\sN}(\mu):=\exp\left(-{\textstyle \frac{1}{N}}\Ent(\mu)\right).
\end{align*}
If we assume the following volume growth condition 
\begin{align}\label{growthcondition}
\int e^{-c\de(p,x)^2}d\m_{\sX}(x)<\infty
\end{align}
it is well known that $\Ent$ does not take the value $-\infty$ on $\mathcal{P}_2(X)$ and $\Ent$ is lower semi-continuous with respect to $W_2$. 
\begin{definition}
A metric measure space $(X,\de_{\sX},\m_{\sX})$ satifies the \textit{entropic curvature-dimension condition} $CD^e(\kappa,N)$ for some lower semi-continuous function $\kappa$ and $N\geq 0$ if 
for any pair $\mu_0,\mu_1\in D(\Ent)$ with compact support there exists a $L^2$-Wasserstein geodesic $\Pi$ connection $\mu_0$ and $\mu_1$ such that for all $t\in [0,1]$
\begin{align}\label{entropic}
U_N(\mu_t)\geq \sigma_{\kappa^-_{\Pi}/\sN}^{\sscr{(1-t)}}(\Theta)U_N(\mu_0)+\sigma_{\kappa^+_{\sPi}/\sN}^{\sscr{(t)}}(\Theta)U_N(\mu_1)
\end{align}
where $(e_t)_{\star}\Pi=\mu_t$, $\Theta=W_2(\mu_0,\mu_1)$ and 
\begin{align*}
\kappa_{\sPi}(t\Theta)=\frac{1}{\Theta^2}\int \kappa(e_t(\gamma)|\dot{\gamma}|^2d\Pi(\gamma).
\end{align*}
Recall that $\kappa_{\Pi}$ is lower semi-continuous by Remark \ref{ody}. 
If (\ref{entropic}) holds for any geodesic $\Pi\in\mathcal{P}(\mathcal{G}(X))$ we say that $(X,\de_{\sX},\m_{\sX})$ is a strong $CD^e(\kappa,N)$ space.
$\mu\in D(\Ent)$ implies that $\mu\in \mathcal{P}_2(\m_{\sX})$.
\begin{remark}
The coefficient $\sigma_{\kappa_{\Pi}/\sN}^{\sscr{(t)}}(\Theta)$ solves 
$
v''(t)+\int \kappa(e_t(\gamma))/N|\dot{\gamma}|^2d\Pi(\gamma)v(t)=0.
$
Therefore, we can write $\sigma_{\kappa_{\Pi}\Theta^2/\sN}^{\sscr{(t)}}$.
\end{remark}
\begin{remark}
The entropic curvature-dimension condition is not $(K,N)$-convexity of $U_{\sN}$ for some function $K$ on $\mathcal{P}_2(X)$. More precisely, it is 
$(\kappa_{\Pi},N)$-convexity on Wasserstein geodesics $\Pi$ where $\kappa_{\Pi}$ depends on the geodesic $\Pi$.
\end{remark}
\begin{lemma}\label{ufufuf} Let $\mms$ be a metric measure space, and let $\kappa$ be lower semi-continuous. Then
\begin{align*}
\liminf_{i\rightarrow\infty}\sigma_{\kappa_{\Pi_i}/\sN}^{\sscr{(t)}}(\Theta)\geq \sigma_{\kappa_{\Pi}/\sN}^{\sscr{(t)}}(\Theta)\mbox{ for each }t\in [0,1]
\end{align*}
if $\Pi_i$ converges weakly to $\Pi$.
\end{lemma}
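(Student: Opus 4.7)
The plan is to reduce the claim, via Lemma \ref{uff}, to a pointwise lower-bound statement on the rescaled curvature functions. Concretely, it suffices to verify
\begin{align*}
\liminf_{i\to\infty}\kappa_{\Pi_i}(s)\geq \kappa_{\Pi}(s)\qquad\text{for every }s\in[0,\Theta];
\end{align*}
once this is established, Lemma \ref{uff} (applied with $\kappa_i:=\kappa_{\Pi_i}/N$ and $\kappa:=\kappa_{\Pi}/N$, both of which are lsc by Remark \ref{ody}) yields the claim. I treat $\Theta$ as common to all plans for notational simplicity; the case where the lengths $\Theta_i$ only converge to $\Theta$ requires only cosmetic adjustments, using uniform compactness of supports together with the continuity of $\sigma^{(t)}_{\kappa}(\theta)$ in $\theta$ when finite.

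To prove the pointwise liminf, fix $s\in[0,\Theta]$, set $t=s/\Theta$, and consider
\begin{align*}
F_t(\gamma):=\kappa(e_t(\gamma))\,|\dot{\gamma}|^2,
\end{align*}
so that $\Theta^2\kappa_{\Pi}(s)=\int F_t\,d\Pi(\gamma)$ and likewise for each $\Pi_i$. The decisive property is that $F_t$ is lower semi-continuous on $\mathcal{G}(X)$ equipped with $\de_\infty$. Indeed, the evaluation $e_t:\mathcal{G}(X)\to X$ is continuous, so $\kappa\circ e_t$ inherits lower semi-continuity from $\kappa$; the length $\gamma\mapsto|\dot{\gamma}|=\mbox{L}(\gamma)$ is lsc under uniform convergence of Lipschitz curves; and the product of two nonnegative lsc functions is lsc. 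The required nonnegativity is achieved after subtracting a constant from $\kappa$, which is admissible because tightness of $\{\Pi_i\}$ forces the supports $e_t(\supp\Pi_i)$ into a common compactum $K\subset X$, on which the lsc function $\kappa$ is bounded below.

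With $F_t$ lsc and bounded below on this common compact support, the Portmanteau theorem for weak convergence of probability measures yields $\liminf_i\int F_t\,d\Pi_i\geq\int F_t\,d\Pi$. Dividing through by $\Theta^2$ delivers the required pointwise inequality, and Lemma \ref{uff} finishes the proof. The main obstacle is the joint lower semi-continuity of $F_t$, since the length factor is only lsc and not continuous; this is resolved by the constant-shift reduction to a product of two nonnegative lsc functions, after which everything else reduces to the standard weak-convergence machinery on the Polish space $\mathcal{G}(X)$.
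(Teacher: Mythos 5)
Your proof takes essentially the same route as the paper: the paper also reduces the claim to lower semi-continuity of $\Pi\mapsto\int\kappa(e_t(\gamma))|\dot{\gamma}|^2\,d\Pi(\gamma)$ under weak convergence (stated there as ``one can easily check'') and then invokes Lemma \ref{uff}, so you are simply supplying the semicontinuity details the paper omits. The only caveat is your claim that tightness forces all supports $e_t(\supp\Pi_i)$ into a common compactum (tightness only confines most of the mass), but in the setting where the lemma is actually used -- plans supported on a common compact subset of $\mathcal{G}(X)$ -- your constant-shift and Portmanteau argument goes through as written.
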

\begin{proof} One can easily check that for each $t\in[0,1]$ $\Pi\mapsto \int\kappa(\gamma(t))|\dot{\gamma}|^2d\Pi(\gamma)=\kappa_{\Pi}(t\Theta)\Theta^2$ is lower semi-continuous w.r.t. weak convergence. 
Then, the statement follows from Lemma \ref{uff}.
\end{proof}
\end{definition}
\begin{lemma}
Let $(X,\de_{\sX},\m_{\sX})$ be a metric measure space satisfying $CD^e(\kappa,N)$ for some lower semi-continuous $\kappa$ and $N>0$.
\begin{itemize}
 \item[(i)] If $\kappa'$ is admissible with $\kappa'\leq \kappa$ and $N'\geq N$, then $(X,\de_{\sX},\m_{\sX})$ satisfies $CD^e(\kappa',N)$. \\
 In particular, $(X,\de_{\sX},\m_{\sX})$ satisfies $CD(\kappa,\infty)$ in the sense of \cite{sturmvariable}.
 \smallskip
 \item[(ii)] Let $V:X\rightarrow \mathbb{R}$ be a measurable function that is bounded from below and that is $(\kappa',N')$-convex in sense of Definition \ref{ahr}
 for some admissible $\kappa'$ and $N'>0$.
\end{itemize}
\end{lemma}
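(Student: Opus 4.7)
The plan is to recognize that the entropic curvature-dimension condition $CD^e(\kappa,N)$ is, for each pair $\mu_0,\mu_1$, exactly the statement that the entropy $\Ent$ is weakly $(\kappa_\Pi,N)$-convex along the distinguished Wasserstein geodesic $\Pi$, where the lower-semicontinuous curvature function $\kappa_\Pi$ on the interval $[0,\Theta]$ depends on $\Pi$. Thus both parts of the lemma reduce to applying the purely metric-space properties of weak $(\kappa,N)$-convexity from Lemma \ref{sums} along a fixed Wasserstein geodesic, and the only new ingredient is tracking how the curvature function $\kappa_\Pi$ transforms when we change $\kappa$ on $X$.

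For part (i), first I would observe that if $\kappa'(x)\le\kappa(x)$ pointwise on $X$, then by monotonicity of the integral defining $\kappa_\Pi$ we have $\kappa'_\Pi(t\Theta)\le\kappa_\Pi(t\Theta)$ as lower-semicontinuous functions of $t\in[0,\Theta]$. Fix a Wasserstein geodesic $\Pi$ realising the $CD^e(\kappa,N)$ inequality between $\mu_0$ and $\mu_1$; by Lemma \ref{sums}(iii) applied on $[0,1]$ to the restriction of $\Ent$ to $\Pi$, together with Proposition \ref{monotonicity}, the weak $(\kappa_\Pi,N)$-convexity of $\Ent$ along $\Pi$ implies the weak $(\kappa'_\Pi,N')$-convexity of $\Ent$ along $\Pi$. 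Rewriting this in terms of $U_{N'}$ gives the $CD^e(\kappa',N')$ inequality with the \emph{same} geodesic $\Pi$, so no re-selection of a coupling is needed. For the ``in particular'' clause, I would multiply the $CD^e(\kappa,N)$ inequality by $N$ and send $N\to\infty$; using the Taylor expansion
\[
\sigma^{\sscr{(t)}}_{\kappa/N}(\Theta)=t+\tfrac{1}{6N}t(1-t^2)\kappa\Theta^2+o(1/N)
\]
and the argument of the corollary immediately following Lemma \ref{sums}, the inequality collapses to the integrated distributional inequality characterising $\kappa_\Pi$-convexity of $\Ent$ along $\Pi$, which is precisely Sturm's $CD(\kappa,\infty)$.

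For part (ii) the conclusion is truncated in the excerpt, but the natural statement (and the standard Sturm--Villani conclusion for adjoining a convex potential) is that $(X,\de_\sX,e^{-V}\m_\sX)$ satisfies $CD^e(\kappa+\kappa',N+N')$. The plan is to fix a Wasserstein geodesic $\Pi$ given by $CD^e(\kappa,N)$ and note the identity
\[
\Ent_{e^{-V}\m_\sX}(\mu_t)=\Ent_{\m_\sX}(\mu_t)+\int V\,d\mu_t,
\]
then to apply Lemma \ref{sums}(ii): the first summand is weakly $(\kappa_\Pi,N)$-convex along $\Pi$ by hypothesis, and the second summand is strongly $(\kappa'_\Pi,N')$-convex along $\Pi$ because $V$ is strongly $(\kappa',N')$-convex on $X$ in the sense of Definition \ref{ahr} (so its lift to $\mathcal{P}_2(X)$ via integration inherits the corresponding convexity along every dynamical geodesic, using Remark \ref{ody} and Corollary \ref{grrr}). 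Summing gives the desired $(\kappa_\Pi+\kappa'_\Pi,N+N')$-convexity along $\Pi$, which is exactly the strengthened $CD^e$ inequality.

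The main obstacle I expect is the verification that the $V$-contribution is \emph{strongly} $(\kappa'_\Pi,N'_\Pi)$-convex along the Wasserstein geodesic $\Pi$, not merely along each of its constituent curves $\gamma$. Concretely, one needs to pass from pointwise convexity of $U_{N'}\circ\bar\gamma$ for $\Pi$-a.e.\ $\gamma$ to the log-concavity of $\mu\mapsto\int e^{-V/N'}\,d\mu$ integrated against $\Pi$; this is where the log-convexity of $\kappa\mapsto\log\sigma^{\sscr{(t)}}_\kappa$ from Lemma \ref{gr} and the Jensen-type bound in Remark \ref{ody} and Corollary \ref{grrr} do the work, ensuring that the integrated distortion coefficient is controlled by $\kappa'_\Pi$. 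Once this is in place, the additivity step via Lemma \ref{sums}(ii) is formal.
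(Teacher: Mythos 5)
Your proposal follows essentially the same route as the paper: part (i) by monotonicity of the distortion coefficients (Proposition \ref{monotonicity}, Lemma \ref{sums}) along the same geodesic $\Pi$, with the ``in particular'' clause obtained by multiplying by $N$ and passing to the limit $N\to\infty$ exactly as in the paper's computation, and part (ii) by lifting $V$ to $\overline{V}(\mu)=\int V\,d\mu$, using the convexity of the log-sum function from Corollary \ref{grrr} together with Jensen's inequality (Remark \ref{ody}), and then adding via Lemma \ref{sums}, which is precisely the paper's argument including your guessed conclusion for the truncated statement. The only slight imprecision is your Taylor-expansion formula in $1/N$, which as written is only valid for constant $\kappa$, but since you also invoke the Green-function/distributional argument of the corollary following Lemma \ref{sums}, the limit step goes through as in the paper.
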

\begin{proof}
(i)\ The first part is an immediate consequence of Proposition \ref{monotonicity}. The second part follows by monotonicity in $N>0$. 
Consider
\begin{align}\label{blabla}
&(1-t)\Ent(\mu_0)+t\Ent(\mu_1)-\Ent(\mu_t)\nonumber\\
&=\lim_{N\rightarrow\infty}\left(-(1-t)N\left(U_N(\mu_0)-1\right)-tN\left(U_N(\mu_1)-1\right)+N\left(U_N(\mu_t)-1\right)\right)\nonumber\\
&\leq \lim_{N\rightarrow\infty}\left[\left(\sigma_{\kappa^-_{\Pi}/\sN}^{\sscr{(1-t)}}(\Theta)-(1-t)\right)N+\left(\sigma_{\kappa^+_{\sPi}/\sN}^{\sscr{(t)}}(\Theta)-t\right)N\right]\nonumber\\
&\leq \lim_{N\rightarrow\infty}\underbrace{N\left[\sigma_{\kappa^-_{\Pi}/\sN}^{\sscr{(1-t)}}(\Theta)+\sigma_{\kappa^+_{\sPi}/\sN}^{\sscr{(t)}}(\Theta)-1\right]}_{=:v(t)}
\end{align}
For large $N$ the function $v$ solves 
\begin{align*}
v''+\int \kappa(e_t(\gamma))|\dot{\gamma}|^2d\Pi(\gamma)\leq 0.
\end{align*}
Therefore the RHS in (\ref{blabla}) is smaller or equal than $\int \kappa(e_t(\gamma))|\dot{\gamma}|^2d\Pi(\gamma)$.
\medskip\\
(ii) We define $\overline{V}:\mathcal{P}_2(X)\rightarrow (-\infty,\infty]$ by $\overline{V}(\mu)=\int V d\mu$. We show that $\overline{V}$ is $(\kappa',N')$-convex.
Recall that by Lemma \ref{grrr} $g:(x,y,\kappa)\mapsto \log \left(\sigma_{\kappa_{\gamma}^-}^{(1-t)}e^x+\sigma_{\kappa_{\gamma}^+}^{(t)}e^y\right)$ is convex. 
Hence, if $\Pi$ is a geodesic in $\mathcal{P}_2(X)$, then 
\begin{align*}
-\frac{1}{N'}\overline{V}(\mu_t)&=-\frac{1}{N'}\int V(e_t(\gamma))d\Pi(\gamma)\\
&\geq \int g\left(-{\textstyle\frac{1}{N'}}V(e_0(\gamma)),-{\textstyle\frac{1}{N'}}V(e_1(\gamma)),\kappa'_{\gamma}|\dot{\gamma}|^2/N'\right)d\Pi(\gamma)\\
&\geq g\left(-{\textstyle\frac{1}{N'}}\overline{V}(\mu_0),-{\textstyle\frac{1}{N'}}\overline{V}(\mu_1),\kappa'_{\sPi}\Theta^2\right).
\end{align*}
After taking the exponential this $(\kappa',N')$-convexity of $\overline{V}$.
Finally, since by lower boundedness of $V$ we have $\mathcal{P}(e^{-V}\m_{\sX})\subset \mathcal{P}(m_{\sX})$, 
and since $\Ent_{e^{-V}\m_{\sX}}(\mu)=\Ent_{\m_{\sX}}(\mu)+\overline{V}(\mu)$, we obtain the result by Lemma \ref{sums}.
\end{proof}

\begin{definition}[Minkowski content]
Consider $x_0\in X$ and $B_r(x_0)\subset X$. Set $v(r)=\m_{\sX}(\bar{B}_r(x_0))$. The Minkowski content of $\partial B_r(x_0)$ (the $r$-sphere around $x_0$) is defined as
\begin{align*}
s(r):=\limsup_{\delta\rightarrow 0}\frac{1}{\delta}\m_{\sX}(\bar{B}_{r+\delta}(x_0)\backslash B_r(x_0)).
\end{align*}
\end{definition}

\begin{theorem}\label{useful}
Assume $(X,\de_{\sX},\m_{\sX})$ satisfies $CD^e(\VK,N)$ for a lower semi-continuous function $\VK$ and $N\in[1,\infty)$.
Then, $(X,\de_{\sX})$ is a proper metric space, each bounded set has finite measure and satisfies a doubling property, and either $\m_{\sX}$ is supported by one point or all points and all sphere have mass $0$. 
\medskip\\
In particular, if $N>1$ then for each $x_0\in X$, for all $0<r<R$ and $\underline{\VK}\in\mathbb{R}$ such that $\VK|_{B_R(x_0)}\geq \underline{\VK}$ and $R\leq \pi\sqrt{N/\underline{\VK}\vee 0}$, we have
\begin{align}\label{one}
\frac{s(r)}{s(R)}\geq \frac{\sin_{\underline{\sk}/\sN}^{\sN}r}{\sin_{\underline{\sk}/\sN}^{\sN}R}\ \ \ \ \& \ \ \ \
\frac{\m_{\sX}(B_r(x_0)}{\m_{\sX}(B_R(x_0)}\geq \frac{\int_0^r\sin_{\underline{\sk}/\sN}^{\sN}tdt}{\int_0^R\sin_{\underline{\sk}/\sN}^{\sN}tdt}.
\end{align}
If $N=1$ and $\VK\leq 0$, then
$
\frac{s(r)}{s(R)}\geq 1$ and $\frac{\m_{\sX}(B_r(x_0)}{\m_{\sX}(B_R(x_0)}\geq \frac{r}{R}.
$
\end{theorem}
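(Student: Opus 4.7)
The plan is to adapt Sturm's standard Bishop--Gromov argument for $CD^*(K,N)$ to the variable curvature setting, exploiting monotonicity of the generalized distortion coefficients (Proposition \ref{monotonicity}) to reduce the local geometric estimate to a constant-curvature comparison once the relevant geodesics are confined to a ball where a uniform lower bound $\underline{\kappa}$ holds.

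The core step is volume comparison (\ref{one}). Fix $x_0\in X$, fix $R\le \pi\sqrt{N/\underline{\kappa}\vee 0}$ with $\kappa|_{B_R(x_0)}\ge\underline{\kappa}$, and choose $0<r<R$. For small $\varepsilon>0$ set $\mu_0^\varepsilon=\m_{\sX}(B_\varepsilon(x_0))^{-1}\m_{\sX}\llcorner B_\varepsilon(x_0)$ and $\mu_1=\m_{\sX}(A)^{-1}\m_{\sX}\llcorner A$ where $A=B_{R'}(x_0)\setminus B_{r'}(x_0)$ is a thin annulus (which I will later shrink). By $CD^e(\kappa,N)$, there is a geodesic $\Pi^\varepsilon$ with
\begin{align*}
U_N(\mu_t^\varepsilon)\ge \sigma_{\kappa^-_{\Pi^\varepsilon}/N}^{(1-t)}(\Theta^\varepsilon)U_N(\mu_0^\varepsilon)+\sigma_{\kappa^+_{\Pi^\varepsilon}/N}^{(t)}(\Theta^\varepsilon)U_N(\mu_1),
\end{align*}
where $\Theta^\varepsilon=W_2(\mu_0^\varepsilon,\mu_1)$. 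For uniform distributions one has $U_N(\mu_1)=\m_{\sX}(A)^{1/N}$, and Jensen's inequality on the density of $\mu_t^\varepsilon$ yields $U_N(\mu_t^\varepsilon)\le \m_{\sX}(\supp\mu_t^\varepsilon)^{1/N}$. Letting $\varepsilon\to 0$, $U_N(\mu_0^\varepsilon)\to 0$ and by tightness plus Lemma \ref{ufufuf} the geodesics $\Pi^\varepsilon$ cluster at a geodesic emanating from $x_0$; moreover its support lies in $B_R(x_0)$, so by Proposition \ref{monotonicity} we may replace $\kappa^+_{\Pi^\varepsilon}$ by $\underline{\kappa}$ in the distortion coefficient. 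This reduces the inequality to
\begin{align*}
\m_{\sX}(\supp\mu_t)^{1/N}\ge \sigma_{\underline{\kappa}/N}^{(t)}(\Theta)\,\m_{\sX}(A)^{1/N},
\end{align*}
which after parametrizing the annulus $A$ by its inner radius $\rho$ and taking the derivative in $\rho$ (equivalently, letting the annulus shrink to the sphere at radius $\rho$ and using the definition of Minkowski content) gives the surface ratio inequality in (\ref{one}). Integration in $r$ produces the ball ratio. The case $N=1$, $\kappa\le 0$ reduces to $\sigma_{0}^{(t)}(\Theta)=t$ and follows from the same scheme.

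With (\ref{one}) in hand, properness and the doubling property are standard consequences: the ball-volume comparison yields a locally uniform doubling constant, so the length space $(X,\de_{\sX})$ is locally doubling, hence locally compact (and thus proper by Hopf--Rinow, \cite[Theorem 2.5.23]{bbi}); bounded sets are covered by finitely many balls of fixed radius and therefore have finite measure. The dichotomy about atoms and spheres follows from the same scheme: apply the $CD^e(\kappa,N)$ inequality in a neighbourhood of a point of positive mass, which via (\ref{one}) forces $\m_{\sX}$ to be concentrated at that single point (and contradicts positivity on balls otherwise); the estimate on Minkowski content directly implies spheres have measure zero because $s(r)$ is then locally integrable.

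The main obstacle is controlling the variable $\kappa_{\Pi^\varepsilon}$ in the distortion coefficient while passing to the limit $\varepsilon\to 0$. One must verify that the geodesic $\Pi^\varepsilon$ stays in $B_R(x_0)$ (which is where we have a uniform lower bound on $\kappa$), and apply Lemma \ref{ufufuf} together with the monotonicity encoded in Proposition \ref{monotonicity} to get a lower bound in terms of $\underline{\kappa}$; this is the step where the paper's earlier analysis of $\sigma_{\kappa}^{(t)}$ for lower semi-continuous $\kappa$ is crucially used. The other subtle point is treating $N=1$: there the entropic CD inequality degenerates, but the constant-curvature reference behaves trivially ($\sigma_0^{(t)}=t$), and the same scheme delivers the linear ratio $r/R$.
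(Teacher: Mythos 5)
Your main line of argument for the case $N>1$ is essentially the intended one: the paper's proof is just a deferral to Theorem 5.3 of \cite{ketterer5}, i.e.\ the localized Bishop--Gromov transport argument you describe, with the entropic inequality converted into a measure inequality via $U_N(\text{uniform on }A)=\m_{\sX}(A)^{1/N}$ and Jensen's bound $U_N(\mu_t)\le \m_{\sX}(\supp\mu_t)^{1/N}$, localization of the geodesics in $B_R(x_0)$, Proposition \ref{monotonicity} to replace $\kappa_{\Pi}$ by $\underline{\kappa}$, and then the Minkowski-content limit and integration in the radius. The dichotomy about atoms and spheres is also obtainable from (\ref{one}) roughly as you indicate (ball ratio $\to 0$ as $r\to 0$ for points; a.e.\ finiteness of $s$ propagated by the sphere comparison for spheres), although your phrasing there is loose.

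There are, however, two genuine gaps. First, the $N=1$, $\kappa\le 0$ case does \emph{not} ``follow from the same scheme''. Your scheme needs a lower bound $\sigma^{\sscr{(t)}}_{\kappa^+_{\Pi}}(\Theta)\ge t$, but monotonicity goes the wrong way: $\kappa_{\Pi}\le 0$ gives $\sigma^{\sscr{(t)}}_{\kappa_{\Pi}}(\Theta)\le \sigma^{\sscr{(t)}}_{0}(\Theta)=t$, and replacing $\kappa_{\Pi}$ by a finite negative lower bound $\underline{\kappa}$ only yields a $\mathfrak{s}_{\underline{\kappa}}$-comparison (even keeping the extra factor coming from the intermediate annulus having width $t\delta$ one gets $s(r)/s(R)\ge \tfrac{R\,\mathfrak{s}_{\underline{\kappa}}(r)}{r\,\mathfrak{s}_{\underline{\kappa}}(R)}$, which is strictly less than $1$ when $\underline{\kappa}<0$), so the claimed ratio $\ge 1$ is not reached. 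In the constant-curvature theory that flat $N=1$ comparison comes from the explicit factor $t^{1/N}$ in the $\tau$-coefficients of the full $CD$ condition, not from curvature monotonicity of $\sigma$; this sub-statement therefore needs a separate argument. Second, there is a circularity with compactness: $CD^e(\kappa,N)$ as defined here applies only to marginals in $D(\Ent)$ with \emph{compact} support, and before properness is proved closed balls and annuli are not known to be compact, so your $\mu_0^{\varepsilon}$, $\mu_1$ are not yet admissible; likewise the ``tightness'' you invoke to let $\Pi^{\varepsilon}$ cluster at a limit geodesic is not available a priori. The first point is repaired by inner regularity of the locally finite Borel measure (work with uniform measures on compact subsets of nearly full measure in the ball and annulus and let the defect go to zero); the second is avoidable altogether, since after the replacement of $\kappa_{\Pi}$ by $\underline{\kappa}$ all quantities in the inequality are scalars and the supports of $\mu_t^{\varepsilon}$ are localized by the triangle inequality, so no limit plan (and hence no Lemma \ref{ufufuf}) is needed; similarly you do not need $U_N(\mu_0^{\varepsilon})\to 0$ --- simply drop that nonnegative term.
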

\begin{proof}
Theorem follows as in the proof of Theorem 5.3 in \cite{ketterer5}.
\end{proof}
\begin{remark}\label{remark}
The estimates in the previous theorem are not sharp. Though, it is enough to prove that $(X,\de_{\sX},\m_{\sX})$ satisfies a doubling property and
has finite Hausdorff dimension bounded from above by $N$.
See Corollary 5.4, Corollary 5.5 in \cite{ketterer5} for the proofs. The entropic curvature-dimension condition $CD^e(\kappa,N)$ yields that $(\supp\m_{\sX},\de_{\sX})$ is
a length space. Therefore, by local compactness and completness it is geodesic, and therefore $\mathcal{P}_2(\supp\m_{\sX})$ is geodesic as well. 
Additionally, if $\kappa$ is bounded from below, the volume growth estimate (\ref{one}) implies that 
\begin{align*}
\int e^{-c\de_{\sX}(p,x)^2}d\m_{\sX}(x)<\infty
\end{align*}
for some point $p\in X$ and $c>0$, and in particular, $\Ent>-\infty$.
\end{remark}
\noindent
Recall that a sequence of $\mmsi_{i\in\mathbb{N}}$ with $\m_{\sX_i}(X_i)<\infty$ converges in Gromov sense to a metric measure space $\mms$ if there exists a metric space $(Z,\de_{Z})$ and isometric embeddings 
$\iota_i,\iota:X_i\rightarrow Z$ for $i\in\mathbb{N}$ 
such that $(\iota_i)_{\star}\m_{\sX_i}$ converges weakly to $(\iota)_{\star}\m_{\sX}$. This can be equivalently defined in terms of Sturm's transportation distance $\mathbb{D}$ (see \cite{stugeo1}).
\begin{definition}
Let $\mmsi_{i\in\mathbb{N}}$ be metric measure spaces converging in Gromov sense to a metric measure space $\mms$. 
Let $\kappa_i,\kappa:X_i,X\rightarrow \mathbb{R}$ be lower semi-continuous functions. 
We say
\begin{align*}
\liminf_{i\rightarrow\infty}\VK_i\geq \VK
\end{align*}
if for each $\eta>0$ there exists $i_{\eta}\in\mathbb{N}$ such that $\VK_i(x)\geq \VK(f_i(x))-\eta$ if $i\geq i_{\eta}$ for each $x\in X_i$.
We say $(\kappa_i)$ converges uniformily to $\kappa$
if for each $\eta>0$ there exists $\delta$ and $i_{\eta}\in\mathbb{N}$ such that $|\kappa_i(x)-\kappa(y)|<\eta$ if $i\geq i_{\eta}$ and $\de_{\sZ}(x,y)<\delta$.
\end{definition}
\begin{theorem}\label{mmmm}
Let $(X_i,\de_{\sX_i},\m_{\sX_i})_{i\in\mathbb{N}}$ be a sequence of metric measure spaces satisfying the condition $CD^e(\VK_i,N_i)$ respectively for lower semi-continuous functions $\VK_i$ and $N_i\in[1,\infty)$. 
Assume $\mmsi$ converges to a metric measure space $(X,\de_{\sX},\m_{\sX})$ 
in 
Gromov sense, and consider an admissible function $\VK:X\rightarrow \mathbb{R}$ and $N\in[1,\infty)$ such that 
\begin{align}\label{zzz}
\liminf_{i\rightarrow\infty}\VK_i\geq \VK\geq K\ \ \& \ \ \limsup_{i\rightarrow \infty}N_i\leq N
\end{align}
Then $(X,\de_{\sX},\m_{\sX})$ satisfies $CD^e(\VK,N)$.
\end{theorem}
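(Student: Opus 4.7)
The plan is to follow the standard strategy for stability of curvature-dimension conditions under measured Gromov convergence, transplanting all the relevant objects into the common ambient space $(Z,\de_{\sZ})$ provided by the definition of Gromov convergence, and identifying $X_i$ and $X$ with their isometric images $\iota_i(X_i), \iota(X) \subset Z$.

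Fix $\mu_0,\mu_1 \in D(\Ent_{\m_{\sX}})$ with compact support. The first step is to construct approximating sequences $\mu_0^i,\mu_1^i \in \mathcal{P}_2(\m_{\sX_i})$ with uniformly compact support such that $W_2^Z(\mu_j^i,\mu_j) \to 0$ and $\limsup_{i\to\infty} \Ent_{\m_{\sX_i}}(\mu_j^i) \leq \Ent_{\m_{\sX}}(\mu_j)$ for $j=0,1$. This is a standard recovery-sequence construction using a mollification on $Z$ combined with the weak convergence $(\iota_i)_\star\m_{\sX_i} \rightharpoonup (\iota)_\star\m_{\sX}$; the volume-growth estimate from Theorem \ref{useful} (applicable since $\kappa_i \geq K$) guarantees the relevant integrability.

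Because each $\mms_i$ satisfies $CD^e(\kappa_i,N_i)$, pick dynamical optimal plans $\Pi^i \in \mathcal{P}(\mathcal{G}(X_i))$ between $\mu_0^i$ and $\mu_1^i$ for which the entropic inequality holds. The doubling property from Theorem \ref{useful}, combined with uniform boundedness of the supports of $\mu_j^i$, gives uniform doubling constants on a common bounded region of $Z$, hence the family $\{\Pi^i\}$ (viewed as measures on $\mathcal{G}(Z)$) is tight. Pass to a subsequence with $\Pi^i \rightharpoonup \Pi$ weakly; closedness of $\mathcal{G}(X) \subset \mathcal{G}(Z)$ and lower semicontinuity of length ensure $\Pi \in \mathcal{P}(\mathcal{G}(X))$ with $(e_0)_\star\Pi=\mu_0$, $(e_1)_\star\Pi=\mu_1$, and $\Pi$ is an $L^2$-Wasserstein geodesic. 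In particular $\Theta_i = W_2(\mu_0^i,\mu_1^i) \to W_2(\mu_0,\mu_1) = \Theta$ and $\mu_t^i:=(e_t)_\star\Pi^i \rightharpoonup \mu_t$ for each $t$.

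Finally, one passes to the limit in
\[
U_{N_i}(\mu_t^i) \geq \sigma_{(\kappa_i)^-_{\Pi^i}/N_i}^{\sscr{(1-t)}}(\Theta_i)\, U_{N_i}(\mu_0^i) + \sigma_{(\kappa_i)^+_{\Pi^i}/N_i}^{\sscr{(t)}}(\Theta_i)\, U_{N_i}(\mu_1^i).
\]
For the left-hand side, lower semicontinuity of $\Ent$ together with $\limsup N_i \leq N$ yields $U_N(\mu_t) \geq \limsup_i U_{N_i}(\mu_t^i)$; for the entropy values at $t=0,1$ the recovery-sequence property gives $\liminf_i U_{N_i}(\mu_j^i) \geq U_N(\mu_j)$. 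For the distortion coefficients, the core observation is that the map $\Pi \mapsto \int \kappa(e_t(\gamma))|\dot\gamma|^2 \,d\Pi(\gamma)$ is lower semicontinuous in the weak topology (Fatou's lemma plus lower semicontinuity of $\kappa$), and combining this with the hypothesis $\liminf \kappa_i \geq \kappa$ one obtains $\liminf_i (\kappa_i)^{\pm}_{\Pi^i}(s\Theta_i)\Theta_i^2 \geq \kappa^{\pm}_\Pi(s\Theta)\Theta^2$ pointwise in $s$. Applying Lemma \ref{uff} and Lemma \ref{ufufuf} jointly together with $N_i \to N$ then gives
\[
\liminf_{i\to\infty}\sigma_{(\kappa_i)^{\pm}_{\Pi^i}/N_i}^{\sscr{(s)}}(\Theta_i) \geq \sigma_{\kappa^{\pm}_{\Pi}/N}^{\sscr{(s)}}(\Theta),
\]
which closes the argument.

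The main obstacle is the triple limit in the distortion coefficients: $\Pi^i \to \Pi$, $\kappa_i \to \kappa$, and $N_i \to N$ must be handled simultaneously. The cleanest way is to note that, for fixed $t$, the previous two lemmas are really two instances of the same liminf principle applied to the potential $(x,\theta)\mapsto \kappa(x)/N\cdot\theta^2$, and to verify the joint version by a diagonal argument using the monotone approximation $\kappa_n \uparrow \kappa$ from \eqref{uuu}. A secondary technical point is ensuring that the approximating measures $\mu_j^i$ have finite entropy with good limsup behavior under mere Gromov (not measured-Gromov–Hausdorff) convergence, but this is handled by the standard mollification argument once the doubling and volume-growth estimates of Theorem \ref{useful} are in place.
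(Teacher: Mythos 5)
Your proposal is correct and follows essentially the same route as the paper: the paper simply outsources the standard part (recovery sequences, tightness of the plans $\Pi^i$, joint lower semicontinuity of the entropy for weakly converging finite reference measures) to the stability results of Gigli--Mondino--Savar\'e, and isolates as the only new ingredient exactly the point you identify, namely the lower semicontinuity of $\Pi\mapsto\int\kappa(e_t(\gamma))|\dot\gamma|^2\,d\Pi(\gamma)$ under weak convergence combined with $\liminf\kappa_i\geq\kappa$, fed into Lemma \ref{uff} (cf.\ Lemma \ref{ufufuf}). Your extra care about the simultaneous limits in $\Pi^i$, $\kappa_i$ and $N_i$ is a reasonable elaboration of details the paper leaves implicit, not a different method.
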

\begin{proof}
Since $\m_{\sX_i}$ and $\m_{\sX}$ are finite, the Entropy functional on $\mathcal{P}_2(Z)$ is lower semi-continuous.
Then, the proof is the same as the proof of corresponding stability results in \cite{gmsstability}. The only additional information one needs is that 
$$\liminf_{i\rightarrow \infty}\int\kappa_i(\gamma(t))|\dot{\gamma}|^2d\Pi_i(\gamma)\geq \int\kappa(\gamma(t))|\dot{\gamma}|^2d\Pi(\gamma)$$
if $(\Pi_i)_{i\in\mathbb{N}}$ with $\Pi_i\in \mathcal{P}(\mathcal{G}(X_i))$ converges weakly in $\mathcal{P}(\mathcal{G}(Z)$ to $\Pi\in \mathcal{P}(\mathcal{G}(X))$.
Then, we can apply Lemma \ref{uff} to obtain the result.
\end{proof}

\begin{definition}[\cite{ketterer5}]\label{bigg}
Consider a lower semi-continuous function $\VK:X\rightarrow \mathbb{R}$, and let $N\in\mathbb{R}$ with $N\geq 1$.
$(X,\de_{\sX},\m_{\sX})$ satisfies the \textit{reduced curvature-dimension condition}
$CD^*(\VK,N)$ if and only if for each pair $\nu_0,\nu_1\in \mathcal{P}_2(X,\m_{\sX})$ with bounded support
there exists a dynamical optimal coupling $\Pi$ of $\nu_0=\varrho_0d\m_{\sX}$ and $\nu_1=\varrho_1d\m_{\sX}$ and a geodesic $(\nu_t)_{t\in[0,1]}\subset\mathcal{P}_2(X,\m_{\sX})$, such that
\begin{align}\label{curvaturedimension}
S_{N'}(\nu_t)\leq{\textstyle -}\!\!\int\Big[\sigma_{\VK^{\sscr{-}}_{\gamma}/N'}^{\sscr{(1-t)}}(|\dot{\gamma}|)\varrho_0\left(e_0(\gamma)\right)^{-\frac{1}{N'}}+\sigma_{\VK^{\sscr{+}}_{\gamma}/N'}^{\sscr{(t)}}(|\dot{\gamma}|)\varrho_1\left(e_1(\gamma)\right)^{-\frac{1}{N'}}\Big]d\Pi(\gamma)
\end{align}
for all $t\in [0,1]$ and all $N'\geq N$.
\end{definition}
\begin{definition}
We say that a metric measure space $\mms$ is \textit{essentially non-branching} if for any optimal dynamcial plan $\Pi\in\mathcal{P}(\mathcal{G}(X))$ between absolutely continuous probability measures
is supported on set of non-branching geodesics. More precisely, there exists $A\subset \mathcal{G}(X)$ such that $\Pi(A)=1$ and for all $\gamma, \gamma'\in A$ we have the following property
\begin{align*}
\gamma(t)=\gamma'(t)\ \ \mbox{ for all }t\in [0,\epsilon] \ \mbox{ for some }\epsilon>0\ \Longrightarrow \ \gamma=\gamma'.
\end{align*}
\end{definition}
\begin{lemma}[\cite{erbarkuwadasturm}]\label{ufuf}
Let $\mms$ be an essentially non-branching metric measure space, and let $\Pi$ be an optimal dynamical coupling. Assume $\Pi=\sum_{i=1}^n\alpha_i\Pi_i$ for optimal dynamical couplings $\Pi_i$. If $(e_0)_{\star}\Pi_i$ are
mutually singular, then the family $(e_t)_{\star}\Pi_i$ is mutually singular as well.
\end{lemma}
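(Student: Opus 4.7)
The plan is to argue by contradiction, combining the $c$-cyclical monotonicity forced by optimality of $\Pi$ with the essentially non-branching hypothesis to show that no pair of geodesics drawn from distinct components $\Pi_i,\Pi_j$ can meet at a common point at an intermediate time.

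First, as a preliminary observation, the decomposition $\Pi=\sum_i \alpha_i \Pi_i$ gives $\alpha_i \Pi_i\leq\Pi$, so each $\Pi_i$ is absolutely continuous with respect to $\Pi$ and therefore concentrated on the same set $A\subset\mathcal{G}(X)$ of non-branching geodesics on which $\Pi$ is concentrated. Moreover, because every nonzero restriction of an optimal dynamical plan is itself an optimal dynamical plan between its marginals, each endpoint coupling $(e_0,e_1)_{\star}\Pi_i$ is supported in a common $c$-cyclically monotone set, with $c(x,y)=\de_{\sX}(x,y)^2$.

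The core step is the following rigidity claim: if $\gamma,\gamma'\in A$ satisfy $\gamma(t)=\gamma'(t)$ for some $t\in(0,1)$ and their endpoint pairs lie in the common $c$-cyclically monotone set, then $\gamma=\gamma'$. Writing $a=|\dot\gamma|$ and $b=|\dot{\gamma}'|$ and routing the ``crossed'' distances through the common meeting point, two-point $c$-cyclical monotonicity yields
\begin{align*}
a^2+b^2 &\leq \de_{\sX}(\gamma(0),\gamma'(1))^2+\de_{\sX}(\gamma'(0),\gamma(1))^2\\
&\leq \bigl(ta+(1-t)b\bigr)^2+\bigl(tb+(1-t)a\bigr)^2,
\end{align*}
which simplifies to $(a-b)^2\leq 0$. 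Hence $a=b$ and all inequalities above are equalities, so the concatenated curves $\sigma_1=\gamma|_{[0,t]}\ast\gamma'|_{[t,1]}$ and $\sigma_2=\gamma'|_{[0,t]}\ast\gamma|_{[t,1]}$ are themselves constant-speed geodesics. The reshuffled dynamical plan obtained by swapping the second halves at the meeting point has the same transportation cost and therefore remains optimal; averaging it with $\Pi$ produces a further optimal plan which, by the essentially non-branching assumption, is supported on a non-branching set containing both $\gamma$ and $\sigma_1$. Since $\gamma$ and $\sigma_1$ agree on the nondegenerate initial segment $[0,t]$, non-branching forces $\gamma=\sigma_1$, that is $\gamma|_{[t,1]}=\gamma'|_{[t,1]}$. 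Applying the same non-branching statement to the time-reversed optimal plan $R_{\star}\Pi$ (which yields terminal-segment non-branching for geodesics in $A$) then upgrades this to $\gamma=\gamma'$.

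To conclude, suppose $(e_t)_{\star}\Pi_i$ and $(e_t)_{\star}\Pi_j$ fail to be mutually singular for some $i\neq j$ and some $t\in(0,1)$. Disintegration of $\Pi_i,\Pi_j$ along $e_t$ then produces a positive-measure family of pairs $(\gamma,\gamma')$ with $\gamma$ drawn from $\Pi_i$, $\gamma'$ drawn from $\Pi_j$ and $\gamma(t)=\gamma'(t)$; the rigidity claim forces $\gamma(0)=\gamma'(0)$ on this family, contradicting the assumed mutual singularity of $(e_0)_{\star}\Pi_i$ and $(e_0)_{\star}\Pi_j$. The case $t=0$ is the hypothesis itself. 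The main technical obstacle I foresee is making the reshuffling argument precise in the measurable category: one needs a Borel disintegration of the relevant restrictions of $\Pi_i,\Pi_j$ along $e_t$ and a measurable matching of their fibers over common meeting points, but this is standard once the pointwise rigidity above is in place.
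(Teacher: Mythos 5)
The paper does not prove this lemma itself (it only cites Erbar--Kuwada--Sturm), and your argument is essentially the argument of that source: two-point $c$-cyclical monotonicity plus the triangle inequality through the common point $\gamma(t)=\gamma'(t)$ forces equal speeds and equality everywhere, so the tail-swapped concatenations are geodesics, the swapped plan is optimal with unchanged marginals, and essential non-branching applied to the mixture and to the time-reversed plan gives $\gamma=\gamma'$ for almost every such pair, contradicting mutual singularity at time $0$. This is correct modulo the measurable matching of fibers over the common part of the $(e_t)$-marginals, which you rightly flag as the remaining (standard) technicality; just note that your ``rigidity claim'' is really an almost-everywhere statement for pairs rather than a pointwise one, since essential non-branching cannot be invoked for two individual geodesics, and that, as in the cited source, one implicitly needs $(e_0)_{\star}\Pi,(e_1)_{\star}\Pi\ll\m_{\sX}$ (true in the paper's application) and the conclusion for $t\in[0,1)$.
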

\begin{theorem}\label{nonbranching}
Let $(X,\de_{\sX},\m_{\sX})$ be an essentially non-branching metric measure space, and let $\kappa$ be a lower semi-continuous function and $N\geq 1$. Then the following statements are equivalent.
\begin{itemize}
 \item[(i)] $\mms$ satisfies $CD^*(\kappa,N)$.
 \smallskip
 \item[(ii)] For each pair $\mu_0,\mu_1\in\mathcal{P}_2(\m_{\sX})$ with bounded support there exists an optimal dynamical transference plan $\Pi$ with $(e_t)_{\star}\Pi\in\mathcal{P}_2(\m_{\sX})$ such that
\begin{align}\label{something}
\varrho_t(\gamma_t)^{-\frac{1}{N}}\geq \sigma_{\sk^-_{\gamma}/\sN'}^{\sscr{(1-t)}}(|\dot{\gamma}|)\varrho_0(\gamma_0)^{-\frac{1}{N}}+\sigma_{\sk^+_{\gamma}/\sN}^{\sscr{(t)}}(|\dot{\gamma}|)\varrho_1(\gamma_1)^{-\frac{1}{N}}.
\end{align}
for all $t\in[0,1]$ and $\Pi$-a.e. $\gamma\in\mathcal{G}(X)$. $\varrho_t$ is the density of the push-forward of $\Pi$ under the map $\gamma\mapsto \gamma_t$.
\smallskip
 \item[(iii)] $\mms$ satisfies $CD^e(\kappa,N)$.
\end{itemize}
\end{theorem}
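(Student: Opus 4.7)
The plan is to prove the equivalences (i) $\Leftrightarrow$ (ii) and (ii) $\Leftrightarrow$ (iii) separately. In each pair, the direction from the pointwise density inequality (\ref{something}) to the integrated condition is soft and follows by integration against $\Pi$ combined with a Jensen-type argument; the reverse direction is the hard one and is where the essentially non-branching hypothesis enters, via a localization-and-recombination argument in the spirit of Erbar, Kuwada and Sturm.

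For the easy directions I would apply $-\frac{1}{N}\log$ to (\ref{something}) and integrate against $\Pi$. Using $\int \log\varrho_i(\gamma_i)\,d\Pi(\gamma)=\Ent(\mu_i)$, $\int \kappa_{\gamma}|\dot\gamma|^2\,d\Pi(\gamma)=\kappa_{\sPi}\Theta^2$, and the convexity of the map $G(x,y,\kappa)=\log[\sigma_{\kappa^-}^{\sscr{(1-t)}}e^x+\sigma_{\kappa^+}^{\sscr{(t)}}e^y]$ established in Corollary \ref{grrr}, Jensen's inequality yields exactly the $CD^e(\kappa,N)$ inequality, giving (ii) $\Rightarrow$ (iii). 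For (ii) $\Rightarrow$ (i), direct integration of (\ref{something}) against $\Pi$ produces the $CD^*$ inequality at exponent $N$; the extension to all $N'\geq N$ follows from monotonicity of the generalized distortion coefficients (Lemma \ref{sums}) combined with the standard H\"older argument from the Bacher--Sturm proof in the constant curvature setting.

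The reverse implications (i) $\Rightarrow$ (ii) and (iii) $\Rightarrow$ (ii) constitute the main work. Fix $\mu_0,\mu_1\in\mathcal{P}_2(\m_{\sX})$ with bounded support and let $\Pi$ be an optimal dynamical plan. For each $n\in\mathbb{N}$, partition $\supp\mu_0\times \supp\mu_1$ into Borel sets $B^n_{ij}$ of diameter $\leq 1/n$ and form subplans $\Pi^n_{ij}$ by restricting $\Pi$ to geodesics with $(\gamma_0,\gamma_1)\in B^n_{ij}$ and normalizing. Applying (i) or (iii) to the endpoint marginals of each $\Pi^n_{ij}$ gives an integrated inequality along that subplan. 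The essentially non-branching hypothesis together with Lemma \ref{ufuf} forces the midpoint measures $(e_t)_{\star}\Pi^n_{ij}$ to be mutually singular, so the density $\varrho^n_t$ of $(e_t)_{\star}\Pi=\sum_{ij}\alpha^n_{ij}(e_t)_{\star}\Pi^n_{ij}$ decomposes as a sum on pairwise disjoint supports. Recombining the subplan inequalities---in the (iii) $\Rightarrow$ (ii) direction using the convexity of $G$ again, in the (i) $\Rightarrow$ (ii) direction using monotonicity of $\sigma^{\sscr{(t)}}_\kappa$ in $\kappa$ from Proposition \ref{monotonicity}---produces an inequality of the shape (\ref{something}) in which the curvature coefficients are averaged over the block containing each geodesic. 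Letting $n\to \infty$, uniform continuity of the distortion coefficients in $\kappa$ (Remark \ref{someotherremark}, Lemma \ref{uniform}) and lower semi-continuity of $\kappa$ ensure that the averaged coefficients converge from below to $\sigma^{\sscr{(t)}}_{\kappa_{\gamma}/\sN}(|\dot\gamma|)$ for $\Pi$-a.e.\ $\gamma$, and the pointwise inequality (\ref{something}) follows.

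The main obstacle I anticipate is the fine control of the curve-dependent coefficients under recombination: one has to show that for $\Pi$-a.e.\ $\gamma$ the averaged coefficient associated with the block containing $(\gamma_0,\gamma_1)$ converges to $\sigma^{\sscr{(t)}}_{\kappa_{\gamma}/\sN}(|\dot\gamma|)$ as the partition refines. Away from branching this follows from the continuous approximants $\kappa_n\uparrow \kappa$ together with Lemma \ref{uff}, but verifying measurability of the geodesic selections and handling the degenerate case $\sigma^{\sscr{(t)}}_{\kappa_{\gamma}/\sN}(|\dot\gamma|)=\infty$ by comparison with $\kappa_n$ requires care. In the constant curvature case these issues collapse to the argument of Erbar, Kuwada and Sturm, and the variable curvature adjustments are essentially those already worked out for $CD^*$ versus $CD$ in \cite{ketterer5}.
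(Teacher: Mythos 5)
Your overall architecture matches the paper's: the directions into (i) and (iii) are the soft ones, obtained by taking $-\frac{1}{N}\log$ of (\ref{something}), integrating against the plan and using the convexity of $G$ from Corollary \ref{grrr} together with Jensen's inequality, while the hard directions use an Erbar--Kuwada--Sturm type decomposition in which Lemma \ref{ufuf} (essential non-branching) gives mutual singularity of the interpolated measures; the paper in fact imports (i)$\Leftrightarrow$(ii) from \cite{ketterer5}, with Lemma \ref{ufuf} replacing the non-branching hypothesis, and only writes out (ii)$\Leftrightarrow$(iii).

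However, as written your hard step has a genuine gap. Conditions (i) and (iii) are existential: applied to the endpoint marginals $\mu_0^{i,j},\mu_1^{i,j}$ of a block they produce \emph{some} optimal dynamical plan $\Pi^{i,j}$ between these marginals along which the integrated inequality holds; they give no information along the normalized restrictions of your fixed plan $\Pi$, which is what your phrase ``an integrated inequality along that subplan'' uses. Consequently you cannot conclude that (\ref{something}) holds $\Pi$-a.e.\ for the plan you started with, and uniqueness of optimal plans between absolutely continuous measures is not available at this stage to identify the blockwise plans with restrictions of $\Pi$. The repair is exactly what the paper does: recombine the new blockwise plans into $\Pi^n=\sum_{i,j}\Pi(A_{i,j})\Pi^{i,j}$ (mutual singularity via Lemma \ref{ufuf} still applies, since the time-$0$ marginals are mutually singular), use that all measures are supported in a common compact set to extract via Prohorov's theorem a weak limit $\tilde{\Pi}$, which is again optimal, pass to the limit in the blockwise inequalities using lower semicontinuity of the entropy and Lemma \ref{ufufuf}, and then upgrade the resulting family of integrated inequalities---indexed by a $\cap$-stable generator of the Borel $\sigma$-field and stable under finite disjoint unions by Corollary \ref{grrr} and Jensen---to the pointwise inequality (\ref{something}) holding $\tilde{\Pi}$-a.e. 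The conclusion of (ii) is thus obtained for the limit plan $\tilde{\Pi}$, not for the originally chosen $\Pi$; this also removes the need for your diameter-$1/n$ partitions to interact with a fixed plan as the partition refines. With this modification the remainder of your outline goes through.
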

\begin{proof}
``(i) $\Leftrightarrow$ (ii)'': The same equivalence was shown in \cite{ketterer5} for the curvature-dimension condition $CD(\kappa,N)$ provided $\mms$ is a non-branching metric measure space, and it is obvious that
same proof also works for the condition $CD^*(\kappa,N)$. If we assume that $\mms$ is essentially non-branching, we can apply the same proof using Lemma \ref{ufuf}. (Compare also with Theorem 3.12 in \cite{erbarkuwadasturm})
\smallskip
\\
``(ii) $\Rightarrow$ (iii)'': By the same argument as in \cite[Lemma 2.11]{bast} one can show that the statement (ii) holds for all $\mu_0,\mu_1\in \mathcal{P}_2(\m_{\sX})$.
Now, let $\Pi$ be an optimal dynamical plan between $\mu_0$ and $\mu_1$ satisfying (\ref{something}). It follows
\begin{align}\label{something1}
-\frac{1}{N}\log\varrho_t(\gamma_t)\geq \log\left[\sigma_{\sk^-_{\gamma}/\sN'}^{\sscr{(1-t)}}(|\dot{\gamma}|)\varrho_0(\gamma_0)^{-\frac{1}{N}}+\sigma_{\sk^+_{\gamma}/\sN}^{\sscr{(t)}}(|\dot{\gamma}|)\varrho_1(\gamma_1)^{-\frac{1}{N}}\right].
\end{align}
Again, Lemma \ref{grrr} says that $g_t:(x,y,\kappa)\mapsto \log \left(\sigma_{\kappa_{\gamma}^-}^{(1-t)}e^x+\sigma_{\kappa_{\gamma}^+}^{(t)}e^y\right)$ is convex.
Therefore, integrating (\ref{something1}) with respect to $\Pi$ and applying Jensen's inequality yields
\begin{align*}
-\frac{1}{N}\Ent(\mu_t)\geq g_t\left(-\frac{1}{N}\Ent(\mu_0),-\frac{1}{N}\Ent(\mu_1),\int\kappa(e_t(\gamma)/N |\dot{\gamma}|^2d\Pi\right).
\end{align*}
Hence, statement (iii) follows by taking the exponential on both sides.
\medskip\\
``(iii) $\Rightarrow$ (ii)'': We roughly follow the argument in the proof of \cite[Theorem 3.12]{erbarkuwadasturm}. Let $\mu_0,\mu_1\in\mathcal{P}_2(\m_{\sX})$ be with bounded support, and let $\Pi$ be an optimal dynamical coupling 
between $\mu_0$ and $\mu_1$. Let $\left\{M_n\right\}_{n\in\mathbb{N}}$ be an $\cap$-stable generator of the Borel $\sigma$-field of $(X,\de_{\sX})$ .
For each $n$ we define a disjoint covering of $X$ of $2^n$ sets by $L_I=\bigcap_{i\in I} M_i \cap \bigcap_{i\in I^c}^{} M_i^c $ where $I\subset \left\{1,\dots,n\right\}$.
\smallskip\\
We fix $n$, and we define $A_{i,j}=\left\{\gamma\in\mathcal{G}(X): (\gamma(0),\gamma(1))\in L_i\times L_j\right\}$ for $i,j=1,\dots,2^n$, 
and probability measures $\mu_t^{i,j}={\Pi(A_{i,j})}^{-1}(e_t)_{\star}\Pi|_{A_{i,j}}$ for $t=0,1$ provided $\Pi(A_{i,j})>0$. By (iii) there are optimal dynamical plan $\Pi^{i,j}$ between $\mu_0^{i,j}$ and $\mu_1^{i,j}$
such that (\ref{entropic}) holds, and $\mu_t^{i,j}=(e_t)_{\star}\Pi^{i,j}$ are absolutely continuous w.r.t. $\m_{\sX}$. Then, we define an optimal dynamical coupling between $\mu_0$ and $\mu_1$ by 
\begin{align*}
\Pi^n=\sum_{i,j=1}^{2^n}\Pi(A_{i,j})\Pi^{i,j}.
\end{align*}Since $\mu_{t}^{i,j}$ are absolutely continuous, $\mu_t^n=(e_t)_{\star}\Pi^n$ is absolutely continuous as well, and 
since the measures $\mu_0^{i,j}$ are mutually singular, $\mu_t^{i,j}=\rho_t^{i,j}d\m_{\sX}$ are mutually singular as well by Lemma \ref{ufuf}. Therefore, $\rho^n_t=\sum\Pi(A_{i,j})\rho_t^{i,j}$ with $\rho^n_t|_{A_{i,j}}=\Pi(A_{i,j})\rho_t^{i,j}$.
Hence, (\ref{entropic}) becomes after taking logarithms (set $\frac{\Pi(A_{i,j})^{-1}}{N}=\alpha_{i,j}$)
\begin{align}\label{mo}
&-\alpha_{i,j}\int_{A_{i,j}}\log \rho_t^n((e_t)\gamma)d\Pi^n(\gamma)\nonumber\\
& \geq g_t\left[-\alpha_{i,j}^{-1}\int_{A_{i,j}}\log\rho_0(e_0(\gamma))d\Pi^n(\gamma),\dots ,-\alpha_{i,j}^{-1}\int_{A_{i,j}}\kappa((e_t)\gamma)|\dot{\gamma}|^2d\Pi^n(\gamma)\right]
\end{align}
Since $\mu_0,\mu_1$ have bounded support,
all measure under consideration are supported by a common compact subsets in $X$ and $\mathcal{G}(X)$ respectively independent of $n$. 
Therefore, up to extraction of subsequences Prohorov's yields that $\Pi^n$ converges weakly to a dynamical coupling $\tilde{\Pi}\in \mathcal{P}(\mathcal{G}(X))$ 
that is optimal by lower semi-continuity of the Wasserstein distance under weak convergence. 
\smallskip\\
Now, note that the relative Entropy $\Ent_{\m_{\sX}|{e_t(A_{i,j})}}$ is lower semi-continuous. 
First, this implies that we can pass to the limit in the LHS of (\ref{mo}) for $n\rightarrow \infty$.  Second, by Lemma \ref{ufufuf}
\begin{align*}
\liminf_{n\rightarrow \infty}\sigma_{\tilde{\kappa}_n}^{(t)}\geq \sigma_{\tilde{\kappa}}^{(t)}
\end{align*}
where $\tilde{\kappa}(t):=\int_{A_{i,j}}\kappa((e_t)\gamma)|\dot{\gamma}|^2d\Pi(\gamma)$. Hence, (\ref{mo}) also holds if we replace $\Pi^n$ by $\Pi$. Finally, by convexity of $g_t$ and Jensen's inequality
(\ref{mo}) also holds when we
replace $A_{i,j}$ by $A$ where $A$ is a disjoint union of sets $A_{i,j}$ for $i,j\in\left\{1,\dots,2^n\right\}$ and $n\in \mathbb{N}$. 
Therefore, (\ref{mo}) holds for any set in the $\cap$-stable generatore, and consequently the inequality holds for $\Pi$-a.e. $\gamma$.
\end{proof}
\section{Riemannian curvature-dimension condition}
Let $\mms$ be a metric measure space. We will briefly repeat some concepts for calculus on metric measure spaces.
For any function $u:X\rightarrow \mathbb{R}$ in $L^2(\m_{\sX})$ the Cheeger energy $\ChX(u)$ can be defined by 
\begin{align*}
 \ChX(u)=\frac{1}{2}\inf\left\{\liminf_{h\rightarrow \infty}\int_{\sX}\left(\lip u_h\right)^2d\m_{\sX}: \left\|u_h-u\right\|_{L^2(\m_{\sX})}\rightarrow 0\right\}.
\end{align*}
The $L^2$-Sobolev space is given by $D(\ChX)=\left\{u\in L^2(\m_{\sX}):\ChX(u)<\infty\right\}$. 
An important fact is that $\Ch$ is not a quadratic form in general. 
\begin{definition}
We say that a metric measure space $\mms$ is \textit{infinetimally Hilbertian} if the associated Cheeger enegery is quadratic. 
\end{definition}
\begin{definition}
Let $\mms$ be a metric measure space, and let $\kappa:X\rightarrow \mathbb{R}$ be lower semi-continuous and bounded from below. We say that $\mms$ satisfies the 
Riemannian curvature-dimension condition $RCD^*(\kappa,N)$ for $N\geq 1$ if $\mms$ is infinitesimally Hilbertian and satisfies the condition $CD^e(\kappa,N)$.
\end{definition}

\noindent
In \cite{agslipschitz} the authors show that $\ChX$ can be represented by
\begin{align}\label{shan}
\Ch^{\sX}(u)=\frac{1}{2}\int_{\sX}|\nabla u|_w^2d\m_{\sX}&\hspace{6pt}\mbox{if  } u\in D(\Ch)
\end{align}
and $+\infty$ otherwise where $|\nabla u|_w:X\rightarrow [0,\infty]$ is Borel measurable and called the minimal weak upper gradient of $u$. 

In particular, $\ChX$ is convex and lower semi-continuous. This allows to define a Laplacian $L^{\sX}$ on $L^2(\m_{\sX})$ as the $L^2$-norm subdifferential of $\ChX$. 
$L^{\sX}$ is not a linear operator in general. 
Still, the classical theory of gradient flows of convex functionals in Hilbert spaces yields that for any $f\in L^2(\m_{\sX})$ there is a unique, locally absolutely continuous flow curve $(f_t)_{t>0}$ starting at $f$ such that 
\begin{align*}
 \frac{d^+}{dt}f_t=Lf_t\ \mbox{ for all }t>0.
\end{align*}
On the other hand, one can study the metric gradient flow of the relative entropy $\Ent$ in $\mathcal{P}_2(X)$ in the sense of the energy dissipation inequality (\ref{dissipation}). 
In \cite{agsheat} the authors prove that for any $\mu\in D(\Ent)$ there exists a unique gradient flow curve in this sense provided the metric measure space satisfies a curvature bound condition in the sense of Lott, Sturm and Villani. 
This also gives a semi-group $\mathcal{H}_t$ on $\mathcal{P}_2(X)$. Then, the main result in \cite{agsheat} is the following indentification between the two gradiend flows.
\begin{theorem}
Let $\mms$ be a $CD(K,\infty)$ space and let $f\in L^2(\m_{\sX})$ such that $d\mu=fd\m_{\sX}\in \mathcal{P}_2(X)$. Then 
$
d\mathcal{H}_t\mu=(P_t f)d\m_{\sX}.
$
\end{theorem}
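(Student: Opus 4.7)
The strategy is the one from Ambrosio--Gigli--Savar\'e \cite{agsheat}: show that both $t\mapsto P_tf\cdot\m_{\sX}$ and $t\mapsto \mathcal{H}_t\mu$ are gradient flow curves of $\Ent$ in the energy dissipation equality sense, and then invoke uniqueness of such flows, which is guaranteed by the $(K,\infty)$-convexity of $\Ent$ along Wasserstein geodesics provided by the $CD(K,\infty)$ assumption. Concretely, since a gradient flow curve of a $K$-convex functional in a geodesic space that satisfies the energy dissipation equality is unique (it agrees with the unique $EVI_K$-flow, or, in the absence of $EVI_K$, uniqueness still follows from the Moreau--Yosida/minimizing movement machinery adapted to the metric setting), the identification $d\mathcal{H}_t\mu=(P_tf)\,d\m_{\sX}$ follows once both curves are recognized as such.

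The substantial step is to show that the curve $\mu_t:=(P_tf)\m_{\sX}$ is an EDE gradient flow of $\Ent$. First I would check that $P_t$ is mass-preserving and positivity-preserving on $L^2(\m_{\sX})$ so that $\mu_t\in\mathcal{P}_2(X)$ for every $t>0$; one also needs $\mu_t\in\mathcal{P}_2(X)$, which follows from the second-moment bound along the heat flow guaranteed under $CD(K,\infty)$. Next, I would apply Kuwada's lemma to deduce that $t\mapsto\mu_t$ is absolutely continuous in $(\mathcal{P}_2(X),W_2)$ with metric derivative controlled by the Fisher information,
\begin{equation*}
|\dot\mu_t|^2\leq\int\frac{|\nabla P_tf|_w^2}{P_tf}\,d\m_{\sX}.
\end{equation*}
On the other hand, the chain rule for the Cheeger energy applied along the $L^2$-gradient flow gives
\begin{equation*}
\frac{d}{dt}\Ent(\mu_t)=-\int\frac{|\nabla P_tf|_w^2}{P_tf}\,d\m_{\sX}.
\end{equation*}

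The main obstacle, and the point where the $CD(K,\infty)$ hypothesis is really used, is the identification of the descending slope of $\Ent$ at $\mu_t$ with the Fisher information:
\begin{equation*}
|\nabla^-\Ent|^2(\mu_t)=\int\frac{|\nabla P_tf|_w^2}{P_tf}\,d\m_{\sX}.
\end{equation*}
The inequality ``$\leq$'' comes from the $K$-geodesic convexity of $\Ent$ combined with the first variation along Wasserstein geodesics, while the inequality ``$\geq$'' relies on the equivalence between weak upper gradients and the relaxed gradient together with a careful approximation of $P_tf$ by bounded Lipschitz test functions (this is the core of \cite{agsheat}). Once this identity is in place, combining the three displayed expressions produces the energy dissipation equality
\begin{equation*}
\Ent(\mu_s)=\Ent(\mu_t)+\tfrac12\int_s^t\bigl(|\dot\mu_r|^2+|\nabla^-\Ent|^2(\mu_r)\bigr)\,dr
\end{equation*}
for $\mu_t=(P_tf)\m_{\sX}$. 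By definition, $\mathcal{H}_t\mu$ is another curve satisfying the same EDE starting at $\mu$. Uniqueness of EDE gradient flows under $K$-convexity of $\Ent$ then forces $(P_tf)\m_{\sX}=\mathcal{H}_t\mu$ for every $t\geq 0$, completing the proof.
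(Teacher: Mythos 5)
The paper does not actually prove this statement: it is quoted as the main identification theorem of Ambrosio--Gigli--Savar\'e \cite{agsheat}, so the only comparison available is with the AGS argument you are reconstructing. Your outline of the substantial part is faithful to that argument: mass and second-moment preservation of $P_t$, Kuwada's lemma for the bound $|\dot\mu_t|^2\leq\int|\nabla P_tf|_w^2/P_tf\,d\m_{\sX}$, the entropy dissipation identity along the $L^2$-heat flow, and the identification of $|\nabla^-\Ent|^2$ with the Fisher information (the convexity-based inequality being precisely where $CD(K,\infty)$ enters) together yield the energy dissipation equality for the curve $(P_tf)\m_{\sX}$.

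The weak point is the uniqueness step, which you treat as routine. Uniqueness of EDE gradient flows is \emph{not} a consequence of $K$-geodesic convexity alone: for a general $K$-convex functional on a metric space, curves satisfying the energy dissipation equality need not be unique. Moreover a $CD(K,\infty)$ space may be Finsler-like, so an $EVI_K$ flow need not exist and you cannot identify the EDE flow with ``the unique $EVI_K$-flow''; likewise the Moreau--Yosida/minimizing movement machinery produces existence, not uniqueness. The uniqueness theorem you need is specific to the entropy and is itself a nontrivial part of \cite{agsheat}: it combines the slope identification you already stated with the convexity of the Fisher information $\rho\mapsto\int|\nabla\rho|_w^2/\rho\,d\m_{\sX}$ and the strict convexity of $\Ent$ under \emph{linear} (not geodesic) interpolation of measures, so that the midpoint of two distinct EDE flows from the same initial datum would dissipate entropy faster than the dissipation identity allows. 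With that justification substituted for your appeal to $EVI_K$/minimizing movements, your plan coincides with the cited proof.
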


\begin{definition}\label{evikn}
We say that a metric measure space $\mms$ satisfies the \textit{evolution-variational inequality} - $EVI_{\kappa,\sN} $ - for some lower semi-continuous function $\kappa:X\rightarrow \mathbb{R}$
and $N\geq 1$ if for every $\overline{\mu}\in\mathcal{P}_2(X)$ there exists a curve $(\mu^s)_{s\in (0,\infty)}$ in $D(\Ent)$ with $\lim_{s\rightarrow 0}\mu^s=\overline{\mu}$, and for each
$s>0$ and each $\nu\in\mathcal{P}_2(X)$ there exists a geodesic $\Pi^s\in\mathcal{P}(\mathcal{G}(X))$ between $\mu^s$ and $\nu$ such that 
\begin{align*}
\frac{d}{dt}\Big|_{t=1}\sigma^{(t)}_{\kappa_{\sPi^s}/\sN}(\Theta^s)>-\infty\ \mbox{ and } \ \frac{d}{dt}\Big|_{t=0}\sigma^{(t)}_{\kappa_{\sPi^s}/\sN}(\Theta^s)<\infty
\end{align*} for each $s>0$
and
\begin{align*}-
\frac{1}{N}\frac{d}{ds}\frac{1}{2}W_2(\mu^s,\nu)^2 + \frac{d}{dt}\Big|_{t=1}\sigma^{(t)}_{\kappa_{\sPi^s}/\sN}(\Theta^s)\geq \frac{d}{dt}\Big|_{t=0}\sigma^{(t)}_{\kappa_{\sPi^s}/\sN}(\Theta^s)\frac{U_N(\nu)}{U_N(\mu^s)}.
\end{align*}
where $\Theta^s=W_2(\mu^s,\nu)$. We also say that $\mu^s$ is a \textit{$L^2$-Wasserstein $EVI_{\kappa,\sN}$ gradient flow curve}.
\end{definition}

\begin{remark}
In \cite{sturmvariable} Sturm makes the following definition. 
We say that a metric measure space $\mms$ satisfies $EVI_{\kappa,\infty} $ if for every $\overline{\mu}\in\mathcal{P}_2(X)$ there exists a curve $(\mu^s)_{s\in (0,\infty)}$ in $D(\Ent)$ and 
$\Pi^s\in\mathcal{P}(\mathcal{G}(X))$ between $\mu^s$ and $\nu$ as in Definition \ref{evikn} such that 
\begin{align}\label{evi}
\frac{1}{2}\frac{d}{ds}W_2(\mu^s,\nu)^2+\left[\int_0^1 (1-t)\kappa_{\sPi^s}(t\Theta^s)dt\right]W_2(\mu^s,\nu)^2\leq \Ent(\mu^s)-\Ent(\nu)
\end{align}
holds for a.e. $t>0$ where $\Theta^s:=W_2(\mu^s,\nu)$. $\mu^s$ is a $L^2$-Wasserstein $EVI_{\kappa,\infty}$ gradient flow curve.
\end{remark}
\begin{remark}
The implications of Lemma \ref{monot} hold as well on the level of Wasserstein gradient flows. In particular $EVI_{\kappa,N}$ implies $EVI_{\kappa,\infty}$.
\end{remark}
\begin{theorem}\label{B}
Let $\mms$ be a metric measure space with $\supp \m_{\sX}=X$, and let $\kappa$ be a lower semi-continuous function with $\kappa\geq K\in\mathbb{R}$ and $N\geq 1$. 
Then the following tree statements are equivalent:
\begin{itemize}
 \item[(i)] $\mms$ is infinitesimally Hilbertian and satisfies $CD^*(\kappa,N)$.
 \smallskip
 \item[(ii)] $\mms$ is infinitesimally Hilbertian and satisfies $CD^e(\kappa,N)$.
 \smallskip
 \item[(iii)] $\mms$ is a length space that satisfies the volume growth condition (\ref{growthcondition}) and $EVI_{\kappa,\sN}$.
\end{itemize}
\end{theorem}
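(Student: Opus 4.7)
The plan is to handle the three implications (i)$\Leftrightarrow$(ii), (ii)$\Rightarrow$(iii), (iii)$\Rightarrow$(ii) separately, essentially lifting the strategy of Erbar--Kuwada--Sturm \cite{erbarkuwadasturm} to the variable curvature setting and invoking the preparatory material already developed in this section.

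For (i)$\Leftrightarrow$(ii) I would simply invoke Theorem \ref{nonbranching} once it is verified that either condition, combined with infinitesimal Hilbertianness, forces essential non-branching. Both $CD^*(\kappa,N)$ and $CD^e(\kappa,N)$ entail $CD(\kappa,\infty)$, and since $\kappa\geq K$ this gives $CD(K,\infty)$; then Rajala--Sturm's result that infinitesimally Hilbertian $CD(K,\infty)$ spaces are essentially non-branching applies. Theorem \ref{nonbranching} then closes the loop.

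The main step is (ii)$\Rightarrow$(iii). Theorem \ref{useful} and Remark \ref{remark} already give that $\mms$ is a proper length space with the volume growth (\ref{growthcondition}); in particular $\Ent$ is lower semicontinuous, bounded below on bounded subsets of $\mathcal{P}_2$, and $\mathcal{P}_2(X)$ is a geodesic space. Infinitesimal Hilbertianness together with $CD(K,\infty)$ yields the identification of the $L^2$-heat flow $(P_t)$ with the Wasserstein gradient flow $(\mathcal{H}_s)$ of $\Ent$. Given $\overline\mu\in\mathcal{P}_2(X)$ I take $\mu^s:=\mathcal{H}_s\overline\mu$ as candidate $EVI_{\kappa,\sN}$ gradient flow curve. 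To establish the inequality in Definition \ref{evikn} at a fixed time $s>0$ and test measure $\nu$, I would fix an optimal dynamical plan $\Pi^s$ from $\mu^s$ to $\nu$ and apply the $CD^e(\kappa,N)$ inequality along $\Pi^s$; dividing by the parameter $t$ and passing to the limit $t\downarrow 0$ yields the derivative-form analogue already isolated in Lemma \ref{summerday}, giving the right-hand side of the EVI. The left-hand side is then obtained from the first variation formula for $\frac{d}{ds}W_2(\mu^s,\nu)^2$ along the heat flow, which follows from Kuwada's duality/AGS machinery using infinitesimal Hilbertianness and the identification of the slope $|\nabla^-\Ent|$ with the Fisher information. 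The hard part is precisely here: unlike in the constant-curvature case the distortion coefficients $\sigma^{(t)}_{\kappa_{\sPi^s}/\sN}$ depend on $\Pi^s$, so one must control the semicontinuity of $\kappa_{\sPi^s}$ and of the derivatives of $\sigma^{(\cdot)}_{\kappa_{\sPi^s}/\sN}$ when $s$ varies. Lemma \ref{uff}, Lemma \ref{ufufuf} and Lemma \ref{uniform} are tailored for exactly this kind of stability and will do the job, provided one is careful at the small-mass points where the distortion coefficients might blow up; I would circumvent this by first restricting to $\overline\mu,\nu$ with bounded densities and compact support and then exhausting $\mathcal{P}_2(X)$ by density.

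For (iii)$\Rightarrow$(ii) I would run the Wasserstein analogue of Theorem \ref{strong}: the hypothesis that every initial datum admits an $EVI_{\kappa,\sN}$ Wasserstein gradient flow of $\Ent$, together with the fact that $(\mathcal{P}_2(X),W_2)$ is a geodesic space whose geodesics are locally compact in the relevant topology (thanks to the growth condition (\ref{growthcondition})), allows one to repeat verbatim the midpoint Taylor-expansion argument of Theorem \ref{strong} applied to $U_N=e^{-\Ent/N}$ along an arbitrary Wasserstein geodesic. Passing from continuous to lower semicontinuous $\kappa_{\sPi}$ uses Remark \ref{ody} and monotone convergence exactly as in the final step of the proof of Theorem \ref{strong}. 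This yields strong $(\kappa,N)$-convexity of $\Ent$ along Wasserstein geodesics, which is precisely $CD^e(\kappa,N)$. Infinitesimal Hilbertianness is then inherited from $EVI_{\kappa,\sN}$ by a standard argument: the EVI forces uniqueness and an averaged contraction of the gradient flow semigroup (via Lemma \ref{monot} and the contraction estimate in the introduction), which, combined with the identification of heat flow with Wasserstein gradient flow, implies linearity of $P_t$ and hence that $\Ch^{\sX}$ is a quadratic form.
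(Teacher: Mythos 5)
Your overall skeleton coincides with the paper's: (i)$\Leftrightarrow$(ii) via $CD(K,\infty)$, the $EVI_K$/Rajala--Sturm essential non-branching argument and Theorem \ref{nonbranching}; (ii)$\Rightarrow$(iii) by running the heat flow, combining a first-variation inequality for $s\mapsto W_2(\mu^s,\nu)^2$ with the differentiated convexity inequality at $t=0$ as in Lemmas \ref{summerday} and \ref{alemma}; (iii)$\Rightarrow$(ii) by adapting the midpoint argument of Theorem \ref{strong}. Two remarks, one of which is a genuine gap. The minor one first: in (ii)$\Rightarrow$(iii) the inequality you need on the left-hand side is the estimate of \cite{agmr} for ``good'' geodesics, $\frac{d^+}{ds}\frac12 W_2(\mu^s,\nu)^2\le \frac{d}{dt}\Ent(\mu_t)\big|_{t=0}$, together with the fact that under $RCD(K,\infty)$ Wasserstein geodesics between absolutely continuous measures are unique and hence good; Kuwada duality is not the tool that produces this first-variation inequality, so your appeal to it should be replaced by the \cite{agmr} result.

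The genuine gap is in (iii)$\Rightarrow$(ii). You claim the argument of Theorem \ref{strong} can be repeated ``verbatim'' and that the outcome, strong $(\kappa,N)$-convexity of $\Ent$ along Wasserstein geodesics, ``is precisely $CD^e(\kappa,N)$''. The paper explicitly warns that this identification is false for variable $\kappa$: the coefficients in (\ref{entropic}) are built from the plan-dependent average $\kappa_{\sPi}(t\Theta)\Theta^2=\int\kappa(e_t(\gamma))|\dot\gamma|^2d\Pi(\gamma)$, not from a function on $\mathcal{P}_2(X)$. This matters precisely at the point where Theorem \ref{strong} is adapted: there, the limit geodesics $\varsigma_0,\varsigma_1$ produced by the midpoint construction need not coincide with the original geodesic, and this is harmless because they stay in a small ball of $X$ where $\kappa$ oscillates by at most $\delta$. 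In the Wasserstein setting the $EVI_{\kappa,\sN}$ at the midpoint involves $\kappa_{\tilde{\sPi}^s_{0/1}}$ for plans $\tilde\Pi^s_{0/1}$ which a priori have no relation to $\kappa_{\sPi}$ of the given geodesic $\Pi$, since distinct plans between the same marginals may charge quite different regions of $X$. The missing ingredient, which the paper supplies, is that $EVI_{\kappa,\sN}$ implies $EVI_K$ (Lemma \ref{monot}), so the space is of $RCD(K,\infty)$ type and $W_2$-geodesics between absolutely continuous measures are unique; hence the concatenation of the weak limits $\tilde\Pi_0$ and $\tilde\Pi_1^-$ must coincide with $\Pi$ itself, and only then does the Taylor expansion yield a distributional inequality for $U_N$ along $\Pi$ with coefficient $K_\Pi(t)=\kappa_{\sPi}(t\Theta)\Theta^2$ (approximated by continuous monotone $K_{\Pi,n}$, as in the lower semi-continuous step you do mention). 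Without this identification your argument proves convexity along some geodesic with some averaged coefficient, which is not the $CD^e(\kappa,N)$ inequality.
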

\begin{proof}
``(i) $\Leftrightarrow$ (ii)'': Both conditions - $CD^*(\kappa,N)$ and $CD^e(\kappa,N)$ - imply a condition $CD(K,\infty)$ 
(see also \cite{ketterer5}). 
Therefore, from \cite{agsriemannian}
follows that $\mms$ satisfies $EVI_K$. Hence, $\mms$ is essentially non-branching by \cite{rajalasturm}, and Theorem \ref{nonbranching} yields the equivalence of $CD^*(\kappa,N)$ and $CD^e(\kappa,N)$.
\\
\\
``(ii) $\Rightarrow$ (iii)'': By Remark (\ref{remark}), $(X,\de_{\sX})$ is a geodesic space that satisfies the volume growth condition (\ref{growthcondition}). 
Therefore, since $CD^e(\kappa,N)$ implies $CD(K,\infty)$ the main result of 
\cite{agsriemannian} yields the existence of $EVI_K$-gradient flow curves. Additionally, in \cite{agmr} the authors prove that for ``good'' geodesics $\mu_t$ in $\mathcal{P}_2(X)$, one has
\begin{align*}
\frac{d^+}{ds}\frac{1}{2}W_2(\mu^s_0,\mu_1)^2\leq \frac{d}{dt}\Ent(\mu_t)\Big|_{t=0}.
\end{align*}
But $\mms$ already satisfies $CD(K,\infty)$ and has a quadratic Cheeger energy. Hence, it satisfies the condition $RCD(K,\infty)$ in the sense of \cite{agsriemannian}, 
Wasserstein geodesics in $\mathcal{P}_2(\m_{\sX})$ are unique, and 
therefore are good geodesic in the sense of \cite{agmr}. Then, we can copy the proof of Lemma \ref{alemma}.
\\
\\
``(iii) $\Rightarrow$ (ii)'': Since $\kappa$ is bounded from below and by monotonicity of 
\begin{align*}
\frac{d}{dt}\Big|_{t=0}\sigma_{\kappa_{}}^{\sscr{(t)}}(\theta)\ \&\ 
\frac{d}{dt}\Big|_{t=1}\sigma_{\kappa_{}}^{\sscr{(t)}}(\theta)
\end{align*} 
$\mms$ already satisfies $EVI_{K}$, and consequently it is infinitesimally Hilbertian by \cite{agsheat}.\medskip\\
We will prove the entropic curvature-dimension condition $CD^e(\kappa,N)$ following the proof of Theorem \ref{strong}.
But, recall that the entropic curvature dimension condition for variable $\kappa$ is not just $(\kappa,N)$-convexity of the entropy.
\smallskip\\
First, assume $\kappa$ is contiuous on $X$.
Pick a $L^2$-Wasserstein geodesic $\Pi$ in $\mathcal{P}(\mathcal{G}(X))$ and let $(e_t)_{\star}\Pi=\mu_t$.
Let $\bar{\mu}:[0,\Theta]\rightarrow X$ its 1-speed reparametrization. Note that
$\kappa_{\Pi}(t \Theta)\Theta^2=:K_{\Pi}(t)$
is just lower semi-continuous. Therefore we replace it by functions $K_{\Pi,n}:[0,1]\rightarrow \mathbb{R}$ that are continuous monotone non-decreasing and converge pointwise to $K_{\Pi}$.
Let $\delta>0$ be arbitrary.
Since $K_{\Pi,n}$ is continuous, we have that $K_{\Pi,n}(\cdot/\Theta)$ is uniformily continous on $[0,\Theta]$. Hence, we can find ${h}>0$ and points $r_i\in[0,\Theta]$ for $i=1,\dots,N$ such that
\begin{align*}
\max K_{\Pi}|_{B_{2{h}}(r_i)}-\min K_{\Pi}|_{B_{2{h}}(r_i)}<\delta
\end{align*}
for each $i=1,\dots,N$. Now, we pick $\hat{r}\in [0,\theta]$ and $\epsilon>0$, 
and consider $\bar{\gamma}=\bar{\mu}|_{[\hat{r}-\epsilon,\hat{r}+\epsilon]}$ such that $\hat{r}\pm\epsilon\in[r_i-{h},r_i+{h}]$ for some $i=1,\dots,N$.
Its constant speed reparametrization is $\gamma_{}:[0,1]\rightarrow \mathcal{P}_2(X)$.
Let $\nu^s$ be the $EVI_{\kappa,\sN}$ gradient flow curve starting in $\mu_{\textstyle{\frac{1}{2}}}$. Then, we obtain
\begin{align*}
-\frac{1}{2N}\frac{d}{ds}W_2(\mu^s,(e_0)_{\star}\Pi)^2+\frac{d}{dt}\sigma_{\kappa^-_{\tilde{\Pi}_0^s}/\sN}^{\sscr{(t)}}(\tilde{\Theta}_0^s)|_{t=1}\geq \frac{d}{dt}\sigma_{\kappa^+_{\tilde{\Pi}_0^s}/\sN}^{\sscr{(t)}}(\tilde{\Theta}_0^s)|_{t=0}\frac{U_N(\gamma_{}(0))}{U_N(\mu^s)}
\end{align*}
where $\tilde{\Pi}_0^s$ and $\tilde{\Pi}_1^s$ are geodesics between 
$(e_0)_{\star}\Pi$ and $\nu^s$, and $(e_1)_{\star}\Pi$ and $\nu^s$ respectively, and $\tilde{\Theta}_0^s=W_2((e_0)_{\star}\Pi,\nu^s)$ and $\tilde{\Theta}_1^s=W_2((e_1)_{\star}\Pi,\nu^s)$. Local compactness of $X$ yields weak convergence of $\tilde{\Pi}^s_{0/1}$ for $s\rightarrow 0$.
\smallskip\\
By lower semi-continuity of the $L^2$-Wasserstein distance the limits $\tilde{\Pi}_0$ and $\tilde{\Pi}_1$ are geodesic between $(e_0)_{\star}\Pi$ and $(e_{\hat{r}})_{\star}\Pi$, and $(e_1)_{\star}\Pi$ and $(e_{\hat{r}})_{\star}\Pi$ respectively. 
Additionally, 
the lower semi-continuity yields that the concatenation of the geodesics $\tilde{\Pi}_0$ and $\tilde{\Pi}_1^-$ as absolutely continuous curves in $\mathcal{P}_2(X)$ w.r.t. $W_2$ is a geodesic as well. But
we know that $X$ already satisfies a condition $RCD^*(K,\infty)$ for some $K$. Hence, $L^2$-Wasserstein geodesics between absolutely continuous probability measures are unique, and therefore we have $\Pi=\tilde{\Pi}$.
\smallskip\\
As in the proof of Theorem \ref{strong} we obtain a weak differential inequality for $U_N$ along $\Pi$, $K_{\Pi,n}$ and $\delta$. By standard convergence results and monotonicity properties the statement follows 
as in the proof of Theorem \ref{strong}.
\end{proof}
\begin{theorem}\label{C}
Let $\mmsi_{i\in\mathbb{N}}$ be a sequence of metric measure spaces with $\m_{\sX_i}<\infty$ converging in Gromov sense to a metric measure space $\mms$. 
Let $\kappa_i:X_i\rightarrow \mathbb{R}$ be lower semi-continuous functions such that $\mmsi$ satisfies the condition $RCD^*(\kappa_i,N_i)$.
Additionally, consider an admissible function $\VK:X\rightarrow \mathbb{R}$ and $N\in[1,\infty)$ such that 
\begin{align}\label{hateit}
\liminf_{i\rightarrow\infty}\VK_i\geq \VK\geq K\in\mathbb{R}\ \ \& \ \ \limsup_{i\rightarrow \infty}N_i\leq N
\end{align}
Then $\mms$ satisfies the condition $RCD^*(\kappa,N)$.
\end{theorem}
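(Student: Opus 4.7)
The plan is to decompose $RCD^*(\kappa, N)$ into its two defining ingredients --- infinitesimal Hilbertianness and $CD^e(\kappa, N)$ --- and verify each separately on the limit space $\mms$.

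For the entropic curvature-dimension piece, I would directly apply Theorem \ref{mmmm}: the hypotheses (\ref{hateit}) coincide exactly with (\ref{zzz}), so $CD^e(\kappa_i, N_i)$ along the sequence transfers to $CD^e(\kappa, N)$ on $\mms$ without additional work.

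For infinitesimal Hilbertianness, I would reduce to the classical constant-curvature setting. From $\liminf \kappa_i \geq \kappa \geq K$, for each $\eta > 0$ there is $i_\eta$ such that $\kappa_i(x) \geq K - \eta$ for all $x \in X_i$ and $i \geq i_\eta$. Using the monotonicity of $CD^e$ in its curvature parameter together with the implication $CD^e(\kappa', N) \Rightarrow CD(\kappa', \infty)$ (both contained in the monotonicity lemma stated immediately after Lemma \ref{ufufuf}), each $X_i$ with $i \geq i_\eta$ is an $RCD(K - \eta, \infty)$ space in the constant-curvature sense of Ambrosio-Gigli-Savar\'e. The classical stability of $RCD(K', \infty)$ under measured Gromov convergence (Gigli-Mondino-Savar\'e) then yields that $\mms$ is $RCD(K - \eta, \infty)$ for every $\eta > 0$; letting $\eta \downarrow 0$ gives $RCD(K, \infty)$, and in particular $\ChX$ is a quadratic form on $L^2(\m_{\sX})$.

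Combining the two outputs reproduces exactly the definition of $RCD^*(\kappa, N)$ for $\mms$. The main obstacle I anticipate is not within either step individually but in making the two limiting constructions refer to the same limit object: Theorem \ref{mmmm} is proved by passing to the limit in dynamical plans on $\mathcal{P}(\mathcal{G}(Z))$, while the classical constant-$K$ stability is phrased directly at the level of the isometric embeddings $\iota_i : X_i \hookrightarrow Z$. Fixing a single ambient $(Z, \de_{\sZ})$ and embeddings $\iota_i, \iota$ from the outset resolves this, so that the $CD^e(\kappa, N)$ structure and the infinitesimally Hilbertian structure established in the two arguments pertain to the same metric measure space $\mms$.
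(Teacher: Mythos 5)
Your proposal is correct and follows essentially the same route as the paper's own (very brief) argument: use the uniform constant lower bound $K$ to transfer infinitesimal Hilbertianness to the limit via the known constant-curvature Riemannian stability, and use Theorem \ref{mmmm} to obtain $CD^e(\kappa,N)$ on the limit, which together give $RCD^*(\kappa,N)$. The only cosmetic difference is that you invoke the classical $RCD(K-\eta,\infty)$ stability of Ambrosio--Gigli--Savar\'e/Gigli--Mondino--Savar\'e explicitly, whereas the paper phrases the same reduction through its Theorem \ref{B} and the condition $RCD^*(K,N)$.
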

\begin{proof}
Since $\kappa_i$ and $\kappa$ are bounded from below by a constant $K$, $\mms$ already satisfies the condition $RCD^*(K,N)$.
Then, by combination of Theorem \ref{mmmm} and Theorem \ref{B} the result follows. 
\end{proof}
\begin{corollary}\label{D}
Let $(M_i,g_{\sM_i})_{i\in\mathbb{N}}$ be a family of compact Riemannian manifolds such that $\ric_{\sM_i}\geq \VK_i\ \& \ \dim_{\sM_i}\leq N$ where $\VK_i:M_i\rightarrow \mathbb{R}$ is a family of equi-continuous functions such that
$\VK_i\geq -C$ for some $C>0$. 
There exists subsequence of $(M_i,\de_{\sM_i},\vol_{\sM_i})$ that converges in 
measured Gromov-Hausdorff sense to a metric measure space $(X,\de_{\sX},\m_{\sX})$, and there exists a subsequence of $\VK_i$ such that $\lim \VK_i= \VK$.
Then $X$ satisfies the condition $RCD^*(\VK,N)$.
\end{corollary}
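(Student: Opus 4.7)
The plan is to verify that each $(M_i,\de_{\sM_i},\vol_{\sM_i})$ already belongs to the class covered by Theorem \ref{C}, extract a uniformly convergent subsequence of the curvature lower bounds, and then invoke the stability result directly.

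First, I would check that a smooth compact Riemannian manifold $(M,g_{\sM})$ with $\ric_{\sM}\geq \kappa g_{\sM}$ for a continuous function $\kappa$ and $\dim M\leq N$ satisfies $RCD^*(\kappa,N)$. Infinitesimal Hilbertianity is classical, because the Cheeger energy of a smooth manifold coincides with its Dirichlet energy. The variable condition $CD^*(\kappa,N)$ in the smooth case is the content of the smooth model analysis in \cite{ketterer5} and is verified via Bochner's formula together with the trace inequality $(\Delta f)^2/N\leq |\nabla^2 f|^2$. Hence every $M_i$ is an $RCD^*(\kappa_i,N)$ space.

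Next, I would secure the convergent subsequence and a limiting curvature function. The uniform lower bound $\kappa_i\geq -C$ and $\dim M_i\leq N$ make each $(M_i,\de_{\sM_i},\vol_{\sM_i})$ an $RCD^*(-C,N)$ space, so Theorem \ref{useful} supplies uniform Bishop--Gromov doubling with a constant depending only on $C$ and $N$. Combined with compactness of each $M_i$, this gives precompactness in measured Gromov--Hausdorff topology and hence a subsequence $(M_{i_k})$ converging to $\mms$ via isometric embeddings $\iota_{i_k}\colon M_{i_k}\to Z$ into a common metric space. For the curvature functions, equi-continuity of $\{\kappa_i\}$ together with the uniform lower bound $\kappa_i\geq -C$ allow an Arzel\`a--Ascoli argument on $Z$: I would extend each $\kappa_{i_k}\circ\iota_{i_k}^{-1}$ to an equicontinuous function on a common neighbourhood of the supports, pass to a further subsequence converging locally uniformly to a continuous function $\tilde{\kappa}$ on $Z$, and set $\kappa:=\tilde{\kappa}|_X$. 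Uniform convergence then yields both $\liminf_{k\to\infty}\kappa_{i_k}\geq \kappa$ in the sense of the definition preceding Theorem \ref{mmmm} and $\kappa\geq -C$, so hypothesis (\ref{hateit}) of Theorem \ref{C} holds with $N_i=N$ and $K=-C$.

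Finally, applying Theorem \ref{C} immediately gives that $\mms$ satisfies $RCD^*(\kappa,N)$. The main obstacle I anticipate is the bookkeeping around equi-continuity of the $\kappa_i$ across different metric spaces: formalizing it requires specifying a joint modulus of continuity after the embeddings (or equivalently a common extension to $Z$) and verifying that the resulting limit $\kappa$ is intrinsic, i.e.\ independent of the chosen extension and of the ambient space $Z$ realizing the Gromov convergence. Once this technical point is settled, the remaining steps reduce to a direct invocation of Theorem \ref{C}.
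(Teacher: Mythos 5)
Your proposal is correct and follows essentially the same route as the paper: Gromov (pre)compactness from the uniform bounds $\VK_i\geq -C$, $\dim\leq N$ to extract a measured Gromov convergent subsequence, an Arzel\`a--Ascoli argument for the equi-continuous curvature functions, and then a direct application of the stability Theorem \ref{C}, with the smooth case (that $\ric_{\sM_i}\geq\VK_i$, $\dim\leq N$ gives $RCD^*(\VK_i,N)$) supplied by \cite{ketterer5} as you indicate. The cross-space equicontinuity bookkeeping you flag as the main obstacle is exactly what the paper dispatches by invoking Gromov's Arzel\`a--Ascoli theorem, so your embedding-into-$Z$ construction is just an explicit version of that step.
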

\begin{proof}
Since there is uniform lower bound for the Ricci curvature, Gromov's compactness theorem yields a converging subsequence. Then, Gromov's Arzela-Ascoli theorem also yields a uniformily converging subsequence of $\VK_i$ with limit $\VK$.
Finally, if we apply the previous stability theorem, we obtain the result. 
\end{proof}

\section{Wasserstein contraction}
\noindent 
From $EVI_{\kappa,\infty}$ one can deduce easily a Wasserstein contraction estimate.
\begin{theorem}\label{control} Let $\mms$ be a metric measure spaces satisfying $EVI_{\kappa,\N}$ where $\kappa:X\rightarrow \mathbb{R}$ is lower semi-continuous.
Consider Wasserstein $EVI_{\kappa,\sN}$-gradient flow curves $\mu^s$ and $\nu^s$ with initial measures $\mu$ and $\nu$. Let $\Pi^s$ be the $L^2$-Wasserstein geodesic between $\mu^s$ and $\nu^s$.
Then the following contraction estimate holds
\begin{align*}
\frac{d^+}{d s}W_2(\mu_s,\nu_s)^2
\leq -2\int_0^1\int\kappa(\gamma(t))|\dot{\gamma}|^2d\Pi(\gamma)^sdt.
\end{align*}
\end{theorem}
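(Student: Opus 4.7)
The plan is to reduce to the $EVI_{\kappa,\infty}$ formulation and then run the standard symmetric ``diagonal'' argument. By the remark following Definition \ref{evikn}, the monotonicity in Lemma \ref{monot} transferred to the Wasserstein setting gives that $\mu^s$ and $\nu^s$ are also $L^2$-Wasserstein $EVI_{\kappa,\infty}$-gradient flow curves, hence satisfy Sturm's inequality
\begin{align*}
\tfrac{1}{2}\tfrac{d}{ds}W_2(\mu^s,\sigma)^2+\int_0^1(1-t)\int\kappa(\gamma(t))|\dot\gamma|^2\,d\tilde\Pi^s(\gamma)\,dt\leq \Ent(\mu^s)-\Ent(\sigma)
\end{align*}
for a.e.\ $s>0$, where $\tilde\Pi^s$ denotes the witnessing geodesic between $\mu^s$ and $\sigma$ (and the analogous inequality holds along $\nu^s$).

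Next I would fix $\tau>0$, apply the above to the flow $\mu^s$ with test measure $\sigma=\nu^\tau$, and symmetrically apply it to the flow $\nu^\tau$ with test measure $\sigma=\mu^s$. Let $\Pi^{s,\tau}$ denote the geodesic joining $\mu^s$ to $\nu^\tau$; the geodesic joining $\nu^\tau$ to $\mu^s$ is then $R_\#\Pi^{s,\tau}$ with $R(\gamma)(t)=\gamma(1-t)$. A change of variable $u=1-t$ shows
\begin{align*}
\int_0^1(1-t)\int\kappa(\gamma(t))|\dot\gamma|^2\,dR_\#\Pi^{s,\tau}(\gamma)\,dt=\int_0^1 u\int\kappa(\gamma(u))|\dot\gamma|^2\,d\Pi^{s,\tau}(\gamma)\,du,
\end{align*}
so when the two $EVI_{\kappa,\infty}$ inequalities are added, the entropy terms cancel and the weights $(1-t)+t=1$ collapse, yielding
\begin{align*}
\tfrac{1}{2}\tfrac{d}{ds}W_2(\mu^s,\nu^\tau)^2+\tfrac{1}{2}\tfrac{d}{d\tau}W_2(\mu^s,\nu^\tau)^2\leq -\int_0^1\int\kappa(\gamma(t))|\dot\gamma|^2\,d\Pi^{s,\tau}(\gamma)\,dt
\end{align*}
for a.e.\ pair $(s,\tau)$.

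Finally, I would restrict to the diagonal $\tau=s$. Since $\mms$ satisfies $EVI_{\kappa,N}$ with $\kappa\geq K$, it is in particular an $RCD^*(K,\infty)$ space, so Wasserstein geodesics between absolutely continuous probability measures are unique; hence $\Pi^{s,s}=\Pi^s$. The curve $s\mapsto W_2(\mu^s,\nu^s)^2$ is locally absolutely continuous (by the triangle inequality and the local absolute continuity of the two flow curves), and a standard chain-rule estimate for partial one-sided derivatives of a locally absolutely continuous function of two variables gives
\begin{align*}
\tfrac{d^+}{ds}W_2(\mu^s,\nu^s)^2\leq \tfrac{d}{ds}W_2(\mu^s,\nu^\tau)^2\Big|_{\tau=s}+\tfrac{d}{d\tau}W_2(\mu^s,\nu^\tau)^2\Big|_{s=\tau},
\end{align*}
which combined with the preceding display produces the desired bound.

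The main technical difficulty lies in the rigorous passage to the diagonal: each of the two $EVI_{\kappa,\infty}$ inequalities holds only for a.e.\ value of its free parameter, and a priori the two exceptional null sets could conspire on the diagonal. The standard remedy, which I would adapt from the analogous constant-$\kappa$ argument in \cite{sturmvariable}, is to work with the inequalities in integrated form against non-negative test functions, apply Fubini, and exploit the absolute continuity of $s\mapsto W_2(\mu^s,\nu^s)^2$ together with lower semi-continuity in the endpoints of $\int_0^1\int\kappa(\gamma(t))|\dot\gamma|^2\,d\Pi^{s,\tau}(\gamma)\,dt$ (a consequence of Lemma \ref{ufufuf} and weak continuity of $\Pi^{s,\tau}$ as a function of its marginals on the essentially non-branching space $\mms$) to recover a genuinely pointwise a.e.\ differential inequality, which is then upgraded to the everywhere upper right derivative bound stated in the theorem.
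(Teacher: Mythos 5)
Your proposal is sound and rests on exactly the same two key mechanisms as the paper's proof: the reduction to $EVI_{\kappa,\infty}$ via the Wasserstein analogue of Lemma \ref{monot}, and the addition of the two evolution variational inequalities with swapped roles, where uniqueness of $L^2$-Wasserstein geodesics between absolutely continuous measures (from the implied constant-curvature $RCD$ condition) identifies the second witnessing geodesic with the time-reversal of the first, so that the weights $(1-t)$ and $t$ collapse to $1$. Where you diverge is in how the diagonal derivative is extracted. The paper avoids the doubling-of-variables machinery altogether: it fixes $s_0<s_1$, integrates the $EVI_{\kappa,\infty}$ inequality for the flow $\mu^s$ over $s\in[s_0,s_1]$ against the \emph{frozen} measure $\nu^{s_1}$ and the inequality for $\nu^s$ against the frozen $\mu^{s_0}$, bounds the integrated entropy terms using monotonicity of $\Ent$ along the flows, adds (the cross term $W_2(\mu_{s_0},\nu_{s_1})^2$ cancels), divides by $s_1-s_0$ and lets $s_1\downarrow s_0$. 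This produces the upper right derivative bound at \emph{every} $s_0$ directly, with no exceptional null sets to worry about and only an asymptotically vanishing entropy remainder. Your route instead keeps the two differential inequalities (each valid only for a.e.\ value of its own parameter, with exact entropy cancellation) and restricts to the diagonal; there the displayed ``chain-rule estimate'' $\frac{d^+}{ds}W_2(\mu^s,\nu^s)^2\leq \partial_s+\partial_\tau$ is \emph{not} valid as literally stated, since the partial derivatives only exist a.e.\ in each variable separately, and the whole burden of the proof sits in the doubling-of-variables lemma you defer to (test functions, Fubini, absolute continuity of the diagonal, plus upper semicontinuity of the right-hand side via Lemma \ref{ufufuf} and weak continuity of $\Pi^{s,\tau}$ in its endpoints, which comes from uniqueness of geodesics rather than essential non-branching per se). That lemma is standard in the constant-curvature literature and adapts to the present setting, so your argument can be completed; but as written it replaces the paper's elementary integrated two-point argument by a genuinely heavier technical step that is only sketched. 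Both proofs share the same (slightly informal) appeal to a uniform lower bound $\kappa\geq K$ to invoke $RCD^*(K,\cdot)$ and uniqueness of geodesics, so no additional gap arises there.
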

\begin{proof} Note that by lower semi-conitnuity of $\kappa$ $\mms$ satisfies a condition $RCD^*(K,N)$ for some constant $K$. In particular, Wasserstein geodesics between measures in $\mathcal{P}^2(\m_{\sX})$ are unique.
Consider $s_0,s_1\in [0,\infty)$.
In (\ref{evi}) we set $\mu^s=\mu^s$ and $\nu=\nu^{s_1}$. Integration from $s_0$ to $s_1$ in $s$ yields
\begin{align*}
\frac{1}{2}W_2(\mu_{s_1},\nu_{s_1})^2-\frac{1}{2}W_2(\mu_{s_0},\nu_{s_1})^2&+\int_{s_0}^{s_1}\left[\int_0^1 (1-t)\kappa_{\sPi^s}(t\Theta^s)dt\right]W_2(\mu^s,\mu^{s_1})^2ds \\ 
&\leq\left[\Ent(\mu_{s_0})-\Ent(\nu_{s_1})\right](s_1-s_0)
\end{align*}
where $\Pi^s$ is the optimal dynamical plan between $\mu^s$ and $\nu$. 
We used that $\Ent$ is monotone decreasing along gradient flow curves. Similar, if we put $\nu=\mu^{s_0}$ and $\nu^s=\mu^{s}$ and integrate again from $s_0$ to $s_1$, then we obtain
\begin{align*}
\frac{1}{2}W_2(\mu_{s_0},\nu_{s_1})^2-\frac{1}{2}W_2(\mu_{s_0},\nu_{s_0})^2&+\int_{s_0}^{s_1}\left[\int_0^1 (1-t)\kappa_{\underline{\sPi}^s}(t\Theta^s)dt\right]W_2(\mu^{s_0},\nu^{s})^2ds\\
&\leq (\Ent(\nu_{s_0})-\Ent(\mu_{s_0}))(s_1-s_0)
\end{align*}
where $\underline{\Pi}^s$ is the optimal dynamical plan between $\nu^s$ and $\mu^{s_0}$.
Adding the last two inequalities, deviding by $s_{1}-s_{0}$ and letting $s_1\rightarrow s_0$ yields
\begin{align*}
&\frac{d^+}{d s}|_{s_0}\frac{1}{2}W_2(\mu_s,\nu_s)^2\\
&\leq \left[-\int_0^1 (1-t)\kappa_{\sPi^{s_0}}(t\Theta^{s_0})dt-\int_0^1 (1-t)\kappa_{\underline{\sPi}^{s_0}}(t\Theta^{s_0})dt\right] W_2(\mu_{s_0},\nu_{s_0})^2.
\end{align*}
Since there is a unique optimal dynamical plan between $\nu^{s_0}$ and $\mu^{s_0}$, we have that $\Pi^{s_0}=\underline{\Pi}^{s_0,-}$.
Therefore
\begin{align*}
\int_0^1 (1-t)\kappa_{\underline{\sPi}^{s_0}}(t\Theta^{s_0})dt=\int_0^1 (1-t)\kappa_{{\sPi}^{s_0}}((1-t)\Theta^{s_0})dt=\int_0^1 t\kappa_{{\sPi}^{s_0}}(t\Theta^{s_0})dt
\end{align*}
Hence
\begin{align*}
\frac{d^+}{d s}\frac{1}{2}W_2(\mu_s,\nu_s)^2
\leq \left[-\int_0^1 \kappa_{{\sPi}^{s}}(t\Theta^{s})dt\right] W_2(\mu_{s},\nu_{s})^2=-\int_0^1\int\kappa(\gamma(t))|\dot{\gamma}|^2d\Pi(\gamma)^sdt.
\end{align*}
\end{proof}
\begin{remark}Following the same lines as in the proof of previous theorem one obtains the following.
If $(X,\de_{\sX})$ is a locally compact complete length space with unique geodesics and a $\kappa$-convex function $f:X\rightarrow[0,\infty)$, we can deduce
\begin{align*}
\frac{d^+}{d s}\de_{\sX}(x_s,y_s)^2
\leq -2\int_0^1\kappa(\gamma^s(t))dt \de_{\sX}(x_0,y_0)^2
\end{align*}
where $x_s$ and $y_s$ are $EVI_{\kappa}$-gradient flow curves of $f$. Then, an application of Gromwall's lemma yields
\begin{align*}
\de_{\sX}(x_s,y_s)^2\leq e^{-2\int_0^s \int_0^1\kappa(\gamma^{\tau}(t))dtd\tau}\de_{\sX}(x_0,y_0)^2.
\end{align*}

\end{remark}
\paragraph{\textbf{The case $N<\infty$}} First, we deduce a contraction estimate for $EVI_{\kappa,\sN}$-gradient flow curves $x_s$ and $y_s$ for $f$ on a metric space $(X,\de_{\sX})$ as in the previous remark.
Consider
\begin{align*}
-\frac{1}{2N}\frac{d}{ds}\de_{\sX}(x_s,z)^2+\frac{d}{dt}\sigma_{\kappa^-_{\gamma^s}/\sN}^{\sscr{(t)}}(|\dot{\gamma}^s|)|_{t=1}\geq \frac{d}{dt}\sigma_{\kappa^+_{\gamma^s}/\sN}^{\sscr{(t)}}(|\dot{\gamma}^s|)|_{t=0}\frac{U_N(z)}{U_N(x_s)}
\end{align*}
and rewrite as follows
\begin{align}\label{mm}
&\frac{1}{2N}\frac{d}{ds}\de_{\sX}(x_s,z)^2+\frac{d}{dt}\Big|_{t=0}\underbrace{\left[\sigma_{\kappa^-_{\gamma^s}/\sN}^{\sscr{(1-t)}}(|\dot{\gamma}^s|)+\sigma_{\kappa^+_{\gamma^s}/\sN}^{\sscr{(t)}}(|\dot{\gamma}^s|)\right]}_{=:w}\nonumber\\
&\hspace{6cm}\leq \frac{d}{dt}\sigma_{\kappa^+_{\gamma^s}/\sN}^{\sscr{(t)}}(|\dot{\gamma}^s|)|_{t=0}\left[1-\frac{U_N(z)}{U_N(x_s)}\right]
\end{align}
$w$ solves $w''+ |\dot{\gamma}^s|\frac{\kappa\circ\gamma}{N} w=0$ with $w(0)=w(1)=1$. Therefore
\begin{align*}
&\frac{d}{dt}\left[\sigma_{\kappa^-_{\gamma^s}/\sN}^{\sscr{(1-t)}}(|\dot{\gamma}^s|)+\sigma_{\kappa^+_{\gamma^s}/\sN}^{\sscr{(t)}}(|\dot{\gamma}^s|)\right]_{t=0}
\\
&\hspace{3cm}=\underbrace{\int_0^1(1-\tau)\frac{\kappa(\gamma(\tau))}{N}\left[\sigma_{\kappa^-_{\gamma^s}/\sN}^{\sscr{(1-\tau)}}(|\dot{\gamma}^s|)+\sigma_{\kappa^+_{\gamma^s}/\sN}^{\sscr{(\tau)}}(|\dot{\gamma}^s|)\right]d\tau}_{=a_{\gamma^s}}|\dot{\gamma}|^2 
\end{align*}
Hence, we can rewrite the left hand side of (\ref{mm}) as follows
\begin{align*}\frac{1}{2N}
e^{-2N\int_0^s a_{\gamma^s}ds}\frac{d}{ds}\left[e^{\int_0^s 2Na_{\gamma^s}ds}\de_{\sX}(x_s,z)^2\right].
\end{align*}Then
(\ref{mm}) becomes
\begin{align*}\frac{1}{2N}
\frac{d}{ds}e^{2N\int_0^sa_{\gamma^s}ds}\de_{\sX}(x_s,z)^2\leq e^{2N\int_0^s a_{\gamma^s}ds}\frac{|\dot{\gamma}^s|}{\frs_{\kappa_{\gamma^s}^+/\sN}(|\dot{\gamma}|)}\left[1-\frac{U_N(z)}{U_N(x_s)}\right]
\end{align*}
and integration with respect to $s$ from $s_1$ to $s_2$ yields
\begin{align*}
\frac{1}{2N}e^{2N\int_{s_1}^{s_2}a_{\gamma^s}ds}\de_{\sX}(x_{s_2},z)^2-\frac{1}{2N}\de_{\sX}(x_{s_1},z)^2\leq \int_{s_1}^{s_2}e^{2N\int_{s_1}^s a_{\gamma^s}ds}\frac{|\dot{\gamma}^s|}{\frs_{\kappa_{\gamma^s}^+/\sN}(|\dot{\gamma}|)}\left[1-\frac{U_N(z)}{U_N(x_s)}\right]ds.
\end{align*}
Since $s\mapsto \frac{|\dot{\gamma}^s|}{\frs_{\kappa_{\gamma^s}^+/N}(|\dot{\gamma}|)}$ is continuous, and since $U_{\sN}$ is increasing w.r.t. $x_s$, 
the right hand side can be estimated by
\begin{align*}
\leq  \underbrace{\max_{s\in[s_1,s_2]}\frac{|\dot{\gamma}^s|}{\frs_{\kappa_{\gamma^s}^+/\sN}(|\dot{\gamma}|)}}_{M_{\gamma}(s_1,s_2)}\int_{s_1}^{s_2}e^{2N\int_{s_1}^s a_{\gamma^s}ds}ds-\underbrace{\min_{s\in[s_1,s_2]}\frac{|\dot{\gamma}^s|}{\frs_{\kappa_{\gamma^s}^+/\sN}(|\dot{\gamma}|)}}_{m_{\gamma}(s_1,s_2)}\int_{s_1}^{s_2}e^{2N\int_{s_1}^s a_{\gamma^s}ds}\frac{U_N(z)}{U_N(x_{s_2})}ds.
\end{align*}
It follows
\begin{align*}
&m_{\gamma}(s_1,s_2)\frac{U_N(z)}{U_N(x_{s_2})}\leq  {M_{\gamma}(s_1,s_2)}\\
&-{\left[2N\int_{s_1}^{s_2}e^{2N\int_{s_2}^s a_{\gamma^s}ds}\right]^{-1}}\de_{\sX}(x_{s_2},z)^2+{\left[2N\int_{s_1}^{s_2}e^{2N\int_{s_1}^s a_{\gamma^s}ds}\right]^{-1}}\de_{\sX}(x_{s_1},z)^2
\end{align*}
\smallskip\\
Consider gradient flow curves $x_s$ and $y_s$ and choose $\lambda, r>0$.
We apply the previous inequality for
$z=y_{\lambda^{-1}r}$ and $s_1=\lambda r$ and $s_2=\lambda(r+\epsilon)$ for some $\epsilon>0$.
\begin{align*}
&m_{\gamma}(\lambda r,\lambda(r+\epsilon))
\frac{U_N(y_{\lambda^{-1}r})}{U_N(x_{\lambda(r+\epsilon)})}\leq  M_{\gamma}(\lambda r,\lambda(r+\epsilon))\\
&-\left[2N\int_{\lambda r}^{\lambda(r+\epsilon)}e^{-2N\int^{\lambda(r+\epsilon)}_s a_{\gamma^s}ds}\right]^{-1}\de_{\sX}(x_{\lambda(r+\epsilon)},y_{\lambda^{-1}r})^2\\
&+\left[2N\int_{\lambda r}^{\lambda(r+\epsilon)}e^{2N\int_{\lambda r}^s a_{\gamma^s}ds}\right]^{-1}\de_{\sX}(x_{\lambda r},y_{\lambda^{-1}r})^2
\end{align*}
And similar if we switch the roles of $x_s$ and $y_s$, and if we set $z=x_{\lambda(r+\epsilon)}$ and $s_1=\lambda^{-1}r, s_2=\lambda^{-1}(r+\epsilon)$.
\begin{align*}
&m_{\tilde{\gamma}}(\lambda^{-1}r, \lambda^{-1}(r+\epsilon))\frac{U_N(x_{\lambda(r+\epsilon)})}{U_N(y_{\lambda^{-1}(r+\epsilon)})}\leq  M_{\tilde{\gamma}}(\lambda^{-1}r,\lambda^{-1}(r+\epsilon))\\
&-\left[2N\int_{\lambda^{-1}r}^{\lambda^{-1}(r+\epsilon)}e^{-2N\int^{\lambda^{-1}(r+\epsilon)}_s a_{\tilde{\gamma}^s}ds}\right]^{-1}\de_{\sX}(y_{\lambda^{-1}(r+\epsilon)},x_{\lambda(r+\epsilon)})^2\\
&+\left[2N\int_{\lambda^{-1}r}^{\lambda^{-1}(r+\epsilon)}e^{2N\int_{\lambda^{-1}r}^s a_{\tilde{\gamma}^s}ds}\right]^{-1}\de_{\sX}(y_{\lambda^{-1}r},x_{\lambda(r+\epsilon)})^2
\end{align*}
where $\tilde{\gamma}^s$ is the geodesic between $y^s$ and $z$.
We set
\begin{align*}
\int_{\lambda r}^{\lambda(r+\epsilon)}e^{-2N\int^{\lambda(r+\epsilon)}_s a_{\gamma^s}ds}=:e(\lambda, \epsilon, -a_{\gamma^s}), \int_{\lambda r}^{\lambda(r+\epsilon)}e^{2N\int_{\lambda r}^s a_{\gamma^s}ds}=:e(\lambda,\epsilon,a_{\gamma^s}).
\end{align*}
If we multiply the resulting two formulas, take sqareroots and use Young's inequality $2\sqrt{ab}\leq \lambda a+\lambda^{-1}b$ for $\lambda$ as before, we obtain
\begin{align*}
&2N 2\sqrt{m_{\gamma}(\lambda r,\lambda(r+\epsilon))\frac{U_N(y_{\lambda^{-1}r})}{U_N(y_{\lambda{-1}(r+\epsilon)})}m_{\tilde{\gamma}}(\lambda^{-1} r,\lambda^{-1}(r+\epsilon))}\\
&\leq 2N\left[ M_{\gamma}(\lambda r,\lambda(r+\epsilon))\lambda^{-1}+M_{\tilde{\gamma}}(\lambda^{-1} r,\lambda^{-1}(r+\epsilon))\lambda\right]\\
&\hspace{0.5cm} + \de_{\sX}(y_{\lambda^{-1}r},x_{\lambda(r+\epsilon)})^2\left[\frac{\lambda^{-1}}{e(\lambda^{-1},\epsilon,a_{\tilde{\gamma}^s})}-\frac{\lambda}{e(\lambda,\epsilon,-a_{{\gamma^s}})}\right]\\
& \hspace{0.5cm}+ \de_{\sX}(x_{\lambda r},y_{\lambda^{-1}r})^2\left[\frac{\lambda}{e(\lambda,\epsilon,a_{{\gamma}^s})}-\frac{\lambda^{-1}}{e(\lambda^{-1},\epsilon,-a_{\tilde{\gamma}^{s}})}\right]\\
&\hspace{0.5cm} - \frac{\lambda^{-1}\epsilon}{e(\lambda^{-1},\epsilon, -a_{\tilde{\gamma}^s})}\frac{1}{\epsilon}\left[\de_{\sX}(y_{\lambda^{-1}(r+\epsilon)},x_{\lambda(r+\epsilon)})^2- \de_{\sX}(y_{\lambda^{-1}r},x_{\lambda r})^2\right].
\end{align*}
Now, let $\epsilon\rightarrow 0$. First. note that
\begin{align*}
\frac{\lambda^{-1}\epsilon}{e(\lambda^{-1},\epsilon, -a_{\tilde{\gamma}^s})}\rightarrow 1
\end{align*}
and 
\begin{align*}
\frac{\lambda^{-1}}{e(\lambda^{-1},\epsilon,a_{\tilde{\gamma}^s})}-\frac{\lambda}{e(\lambda,\epsilon,-a_{{\gamma^s}})}\rightarrow -N(\lambda^{-1}a_{\tilde{\gamma}^{\lambda^{-1}r}}+\lambda a_{{\gamma}^{\lambda r}} )\\
\frac{\lambda}{e(\lambda,\epsilon,a_{{\gamma}^s})}-\frac{\lambda^{-1}}{e(\lambda^{-1},\epsilon,-a_{\tilde{\gamma}^{s}})}\rightarrow -N(\lambda^{-1}a_{\tilde{\gamma}^{\lambda^{-1}r}}+\lambda a_{{\gamma}^{\lambda r}} )
\end{align*}
Also note, that 
\begin{align*}
m_{\gamma}(\lambda r,\lambda(r+\epsilon)),\ \ M_{\gamma}(\lambda r,\lambda(r+\epsilon))\rightarrow \frac{|\dot{\gamma}^{\lambda r}|}{\frs_{\kappa_{\gamma^{\lambda r}}/\sN}(|\dot{\gamma}^{\lambda r}|)}
\end{align*}
$\gamma^{\lambda r}$ is the unique geodesic between $x_{\lambda r}$ and $y_{\lambda^{-1}r}$. Therefore $\gamma^{\lambda r}=(\tilde{\gamma}^{\lambda^{-1}r})^-$.
And since 
\begin{align*}
a_{\tilde{\gamma}^{\lambda^{-1}r}}&=\int_0^1(1-\tau)\frac{\kappa(\tilde{\gamma}(\tau))}{N}\left[\sigma_{\kappa^-_{\tilde{\gamma}^{\lambda^{-1}r}}/\sN}^{\sscr{(1-\tau)}}(|\dot{\tilde{\gamma}}^{\lambda^{-1}r}|)+\sigma_{\kappa^+_{\tilde{\gamma}^{\lambda^{-1}r}}/\sN}^{\sscr{(\tau)}}(|\dot{\tilde{\gamma}}^{\lambda^{-1}r}|)\right]d\tau\\
&=\int_0^1\tau\frac{\kappa({\gamma}(\tau))}{N}\left[\sigma_{\kappa^-_{{\gamma}^{\lambda^{-1}r}}/\sN}^{\sscr{(1-\tau)}}(|\dot{{\gamma}}^{\lambda^{-1}r}|)+\sigma_{\kappa^+_{{\gamma}^{\lambda^{-1}r}}/\sN}^{\sscr{(\tau)}}(|\dot{{\gamma}}^{\lambda^{-1}r}|)\right]d\tau
\end{align*}
we obtain, that
\begin{align*}
&N\left[\lambda a_{\gamma^{\lambda^{}r}}+\lambda^{-1}a_{\tilde{\gamma}^{\lambda^{-1} r}}\right]\\
&=\int_0^1{\kappa(\gamma(\tau))}\left[\left((1-\tau)\lambda+\tau\lambda^{-1}\right)\left[\sigma_{\kappa^-_{\gamma^{\lambda r}}/\sN}^{\sscr{(1-\tau)}}(|\dot{\gamma}^{\lambda r}|)+\sigma_{\kappa^+_{\gamma^{\lambda r}}/\sN}^{\sscr{(\tau)}}(|\dot{\gamma}^{\lambda r}|)\right]\right]d\tau=:b_{\gamma^{\lambda r}}
\end{align*}
Therefore, if we set $g(r)=\de_{\sX}(y_{\lambda^{-1}r},x_{\lambda r})^2$, we obtain
\begin{align*}
&\frac{d}{dr}g(r)\leq -2b_{\gamma^{\lambda r}}g(r) \\
&+ 2N
\left[\frac{\lambda}{\frs_{\kappa_{\gamma^{\lambda r}}/\sN}(|\dot{\gamma}^{\lambda r}|)} +\frac{\lambda^{-1}}{\frs_{\kappa_{\gamma^{\lambda r}}^-/\sN}(|\dot{\gamma}^{\lambda r}|)} -\frac{2}{\sqrt{\frs_{\kappa_{\gamma^{\lambda r}}/\sN}(|\dot{\gamma}^{\lambda r}|)\frs_{\kappa_{\gamma^{\lambda r}}^-/\sN}(|\dot{\gamma}^{\lambda r}|)}}\right]|\dot{\gamma}^{\lambda r}|
\end{align*}
%
%
%
or equivalently
\begin{align}\label{remember}
\frac{d}{dr}g(r)\leq - 2b_{\gamma^{\lambda r}}g(r) + 2N
\left[\textstyle{\sqrt{\lambda \frac{d}{dt}\big|_{t=0}\sigma_{\kappa_{\gamma}/N}^{(t)}(|\dot{\gamma}^{\lambda r}|)} -\sqrt{\lambda^{-1} \frac{d}{dt}\big|_{t=0}\sigma_{\kappa_{\gamma}^-/N}^{(t)}(|\dot{\gamma}^{\lambda r}|)}}\right]^2
\end{align}
\begin{remark}
If we set $\lambda =1$, this becomes
\begin{align}\label{imagine}
\frac{d}{dr}\de_{\sX}(y_{s},x_{s})^2&\leq -2\int_0^1{\kappa(\gamma(\tau))}{\left[\sigma_{\kappa^-_{\gamma^{s}}/\sN}^{\sscr{(1-\tau)}}(|\dot{\gamma}^{s}|)+\sigma_{\kappa^+_{\gamma^{s}}/\sN}^{\sscr{(\tau)}}(|\dot{\gamma}^{s}|)\right]}d\tau
\de_{\sX}(y_{s},x_{s})^2\nonumber\\
&\hspace{1cm}+ 2N
\left[\textstyle{\sqrt{ \frac{d}{dt}\big|_{t=0}\sigma_{\kappa_{\gamma}}^{(t)}(|\dot{\gamma}^{s}|)} -\sqrt{\frac{d}{dt}\big|_{t=0}\sigma_{\kappa_{\gamma}^-}^{(t)}(|\dot{\gamma}^{s}|)}}\right]^2
\end{align}
If $\kappa=K$ is constant, then (\ref{imagine}) simplifies to
\begin{align*}
\frac{d}{dr}\de_{\sX}(y_{s},x_{s})^2&\leq -2K\int_0^1{\left[\sigma_{\sK/\sN}^{\sscr{(1-\tau)}}(\de_{\sX}(x_s,y_s))+\sigma_{\sK/\sN}^{\sscr{(\tau)}}(\de_{\sX}(x_s,y_s))\right]}d\tau
\de_{\sX}(y_{s},x_{s})^2\end{align*}
And if $N\rightarrow \infty$, (\ref{imagine}) becomes
\begin{align*}
&\frac{d}{dr}\de_{\sX}(y_{s},x_{s})^2\leq -2\int_0^1{\kappa(\gamma(\tau))} d\tau
\de_{\sX}(y_{s},x_{s})^2
\end{align*}
If $\lambda\neq 1$, the second term in the right hand side in (\ref{remember}) tends to $\infty$ for $N\rightarrow \infty$.
\end{remark}
If we follow the same reasoning, in the context of Wasserstein $EVI_{\kappa,\sN}$-gradient flow curves for a compact metric measure spaces, we obtain the next theorem.
\begin{theorem}\label{dimensionalcontraction}
Let $\mms$ be a compact metric measure spaces satisfying the condition $RCD^*(\kappa,N)$ where $\kappa:X\rightarrow \mathbb{R}$ is lower semi-continuous.
Consider Wasserstein $EVI_{\kappa,\sN}$-gradient flow curves $\mu^s$ and $\nu^s$ with initial measures $\mu$ and $\nu$.
Let $\lambda, r>0$.
Then the following contraction estimate holds
\begin{align}\label{woman}
&\frac{d}{dr} W_2(\mu^{\lambda r},\nu^{\lambda^{-1}r})^2\nonumber\\
&\hspace{10pt}\leq-
2\int_0^1{\kappa(\gamma(\tau))}\left[\left((1-\tau)\lambda+\tau\lambda^{-1}\right)\Big(\sigma_{\kappa^-_{\Pi^{\lambda r}}\Theta_{\lambda r}^2/\sN}^{\sscr{(1-\tau)}}+\sigma_{\kappa^+_{\Pi^{\lambda r}}\Theta_{\lambda r}^2/\sN}^{\sscr{(\tau)}}\Big)\right]d\tau\nonumber\\
&\hspace{20pt}+ 2N
\left[\textstyle{\sqrt{\lambda \frac{d}{dt}\big|_{t=0}\sigma_{\kappa^+_{\Pi^{\lambda r}}\Theta_{\lambda r}^2/\sN}^{\sscr{(\tau)}}} -\sqrt{\lambda^{-1} \frac{d}{dt}\big|_{t=0}\sigma_{\kappa^-_{\Pi^{\lambda r}}\Theta_{\lambda r}^2/\sN}^{\sscr{(\tau)}}}}\right]^2
\end{align}
where $\Pi^{\lambda r}$ is the unique $L^2$-Wasserstein geodesic between $\mu^{\lambda r}$ and $\nu^{\lambda^{-1} r}$.
\end{theorem}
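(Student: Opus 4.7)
The plan is to transport the metric-space derivation that produced \eqref{remember} into the Wasserstein setting, with $(X,\de_{\sX})$ replaced by $(\mathcal{P}_2(X),W_2)$, the gradient flow curves $x_s,y_s$ replaced by the Wasserstein $EVI_{\kappa,\sN}$ flow curves $\mu^s,\nu^s$, and the minimizing geodesics $\gamma^s,\tilde\gamma^s$ replaced by the dynamical optimal plans $\Pi^s$ and $\tilde\Pi^s$ supplied by Definition \ref{evikn}. The key structural input is that $RCD^*(\kappa,N)$ forces $RCD(K,\infty)$ for some constant $K\in\mathbb{R}$, so that between any two absolutely continuous measures there is a unique $W_2$-geodesic; this uniqueness is what will eventually identify $\Pi^{\lambda r}$ with the reversal of $\tilde\Pi^{\lambda^{-1}r}$, exactly as $\gamma^{\lambda r}=(\tilde\gamma^{\lambda^{-1}r})^-$ was used in the metric case.

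First, for fixed $\epsilon>0$ I would write the Wasserstein $EVI_{\kappa,\sN}$ inequality for $\mu^s$ with target $\nu=\nu^{\lambda^{-1}r}$ and rearrange it into the form \eqref{mm}, so that the left-hand side becomes $\tfrac{1}{2N}\tfrac{d}{ds}W_2(\mu^s,\nu^{\lambda^{-1}r})^2+\tfrac{d}{dt}|_{t=0}w^s(t)$ with
\begin{equation*}
w^s(t)=\sigma^{\sscr{(1-t)}}_{\kappa^-_{\Pi^s}/\sN}(\Theta^s)+\sigma^{\sscr{(t)}}_{\kappa^+_{\Pi^s}/\sN}(\Theta^s),
\end{equation*}
where $\Theta^s=W_2(\mu^s,\nu^{\lambda^{-1}r})$. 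Since $w^s$ solves the linear second-order ODE associated with $\kappa_{\Pi^s}\Theta^{s,2}/N$ with boundary data $w^s(0)=w^s(1)=1$, the quantity $\frac{d}{dt}|_{t=0}w^s$ equals $a_{\Pi^s}$ in the same sense as on the metric side. The integrating factor $\exp(2N\int_0^s a_{\Pi^\tau}d\tau)$ then allows one to integrate from $s_1=\lambda r$ to $s_2=\lambda(r+\epsilon)$ and, after bounding the factor $|\dot{\Pi}^s|/\frs_{\kappa^+_{\Pi^s}/\sN}(|\dot{\Pi}^s|)$ by its max and min over the interval and using monotonicity of $U_N$ along the flow, arrive at the analogue of the first of the two displayed two-sided bounds in the derivation preceding \eqref{remember}.

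Second, I would repeat the same procedure with the roles of $\mu^s$ and $\nu^s$ exchanged, now taking target $z=\mu^{\lambda(r+\epsilon)}$ and integrating in $s$ from $\lambda^{-1}r$ to $\lambda^{-1}(r+\epsilon)$ against the plan $\tilde\Pi^s$ between $\nu^s$ and $\mu^{\lambda(r+\epsilon)}$. Multiplying the two resulting inequalities, taking square roots, and applying Young's inequality $2\sqrt{ab}\leq\lambda a+\lambda^{-1}b$ with the parameter $\lambda$ of the theorem, one obtains a discrete difference quotient for $g(r)=W_2(\mu^{\lambda r},\nu^{\lambda^{-1}r})^2$, exactly analogous to the penultimate display on the metric side.

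Finally, I would pass to the limit $\epsilon\to 0$. Uniqueness of Wasserstein geodesics under $RCD^*$ identifies the limit of $\tilde\Pi^{\lambda^{-1}r}$ with the reversal of $\Pi^{\lambda r}$, so that $\lambda a_{\Pi^{\lambda r}}+\lambda^{-1}a_{\tilde\Pi^{\lambda^{-1}r}}$ collapses into the single integrand displayed in the first line of the right-hand side of \eqref{woman}, while the residual from Young's inequality rearranges, via the algebraic identity $\tfrac{\lambda}{\frs}+\tfrac{\lambda^{-1}}{\frs^-}-\tfrac{2}{\sqrt{\frs\,\frs^-}}=\bigl[\sqrt{\lambda\,\partial_t\sigma^{\sscr{(t)}}_{\kappa^+/\sN}|_{t=0}}-\sqrt{\lambda^{-1}\,\partial_t\sigma^{\sscr{(t)}}_{\kappa^-/\sN}|_{t=0}}\bigr]^2$ used in the metric case, into the second term on the right-hand side of \eqref{woman}. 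The main obstacle is the passage to the limit: one needs continuity at $s=\lambda r$ of the maps $s\mapsto\Pi^s$, $s\mapsto|\dot{\Pi}^s|$ and $s\mapsto a_{\Pi^s}$, which follows from compactness of $X$ together with the stability of optimal dynamical plans under weak convergence and the uniqueness of Wasserstein geodesics between absolutely continuous measures in $RCD^*$-spaces, combined with the lower semi-continuity of $\Pi\mapsto\int\kappa(\gamma(t))|\dot\gamma|^2d\Pi$ provided by Lemma \ref{ufufuf}. Modulo this continuity input, the argument is a verbatim translation of the computation preceding \eqref{remember} from the metric to the Wasserstein setting.
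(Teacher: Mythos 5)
Your proposal matches the paper's own proof: the paper derives the estimate by carrying out exactly the computation preceding (\ref{remember}) — the $EVI_{\kappa,\sN}$ inequality rewritten as (\ref{mm}), the integrating factor, integration over $[\lambda r,\lambda(r+\epsilon)]$ and $[\lambda^{-1}r,\lambda^{-1}(r+\epsilon)]$, multiplication and Young's inequality, then $\epsilon\to 0$ — and simply transplants it to $(\mathcal{P}_2(X),W_2)$ with the unique optimal dynamical plans in place of geodesics, which is precisely your plan, including the identification of $\Pi^{\lambda r}$ with the reversal of $\tilde\Pi^{\lambda^{-1}r}$ via uniqueness of Wasserstein geodesics under $RCD^*$. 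Your explicit attention to the continuity/stability input needed in the limit $\epsilon\to 0$ is, if anything, more careful than the paper's one-line appeal to ``the same reasoning.''
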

\begin{remark}
If we consider $\kappa=K$ constant, in the limit $W_2(\mu_0,\nu_0)\rightarrow 0$ the right hand side of (\ref{woman}) is
\begin{align}
\sim-K\left(\lambda+\lambda^{-1}\right)W_2(\mu_0,\nu_0)^2 + 2N\left[\sqrt{\lambda} -\sqrt{\lambda^{-1}}\right]^2.
\end{align}
That is the same assymptotic behaviour as the corresponding contraction estimates in \cite{erbarkuwadasturm} (Remark 2.20) or in \cite{kuwadaspacetime}.
Hence, in Wasserstein space context with constant lower curvature bound our estimate yields the corresponding Bakry-Ledoux gradient estimate \cite{erbarkuwadasturm, kuwadaspacetime, bakryledouxliyau}.
\end{remark}
\small{
\bibliography{new}

\bibliographystyle{amsalpha}
}
\end{document}